\theoremstyle{thmstyleone}%
\newtheorem{theorem}{Theorem}
\newtheorem{proposition}{Proposition}
\theoremstyle{thmstyletwo}%
\newtheorem{example}{Example}%
\newtheorem{remark}{Remark}%
\theoremstyle{thmstylethree}%
\newtheorem{definition}{Definition}%
\DeclareMathOperator{\inertia}{\textbf{inertia}}
\DeclareMathOperator*{\argmin}{arg\; min}     
\DeclareMathOperator{\diag}{diag}         
\newcommand{\inv}[1][1]{^{-#1}}
\renewcommand{\iff}{\Leftrightarrow}
\begin{document}
	
\title[Newton-type methods for nonconvex-nonconcave minmax optimization]{Newton and interior-point methods for (constrained) nonconvex-nonconcave minmax optimization with stability and instability guarantees}

\author*[1]{\fnm{Raphael} \sur{Chinchilla}}\email{raphaelchinchilla@ucsb.edu}

\author[2]{\fnm{Guosong} \sur{Yang}}\email{guosong.yang@rutgers.edu}

\author[1]{\fnm{Jo\~{a}o} \sur{P.~Hespanha }}\email{hespanha@ucsb.edu}

\affil[1]{\orgdiv{Center for Control, Dynamical Systems, and Computation}, \orgname{University of California, Santa Barbara}, \orgaddress{\postcode{93106}, \state{California}, \country{United States of America}}}

\affil[2]{\orgdiv{Department of Electrical and Computer Engineering}, \orgname{Rutgers University–New Brunswick}, \orgaddress{\postcode{08854}, \state{New Jersey}, \country{United States of America}}}

%
\abstract{We address the problem of finding a local solution to a nonconvex-nonconcave minmax optimization using Newton type methods, including primal-dual interior-point ones. The first step in our approach is to analyze the local convergence properties of Newton's method in nonconvex minimization. It is well established that Newton's method iterations are attracted to any point with a zero gradient, irrespective of it being a local minimum. From a dynamical system standpoint, this occurs because every point for which the gradient is zero is a locally asymptotically stable equilibrium point. We show that by adding a multiple of the identity such that the Hessian matrix is always positive definite, we can ensure that every non-local-minimum equilibrium point becomes unstable (meaning that the iterations are no longer attracted to such points), while local minima remain locally asymptotically stable. Building on this foundation, we develop Newton-type algorithms for minmax optimization, conceptualized as a sequence of local quadratic approximations for the minmax problem. Using a local quadratic approximation serves as a surrogate for guiding the modified Newton's method towards a solution. For these local quadratic approximations to be well-defined, it is necessary to modify the Hessian matrix by adding a diagonal matrix. We demonstrate that, for an appropriate choice of this diagonal matrix, we can guarantee the instability of every non-local-minmax equilibrium point while maintaining stability for local minmax points. Using numerical examples, we illustrate the importance of guaranteeing the instability property. While our results are about local convergence, the numerical examples also indicate that our algorithm enjoys good global convergence properties.}

\keywords{minmax optimization, robust optimization, Newton method, interior-point method, local minmax}

\maketitle

\section{Introduction}

In minmax optimization, one minimizes a cost function which is itself obtained from the maximization of an objective function. Minmax optimization is a powerful modeling framework, generally used to guarantee robustness to an adversarial parameter such as accounting for disturbances in model predictive control \cite{bemporad_robust_1999,copp_simultaneous_2017}, security related problems \cite{pita2008deployed,yang2021adaptive}, or training neural networks to be robust to adversarial attacks \cite{madry_towards_2019}. It can also be used as a framework to model more general problem such as sampling from unknown distributions using generative adversarial networks \cite{goodfellow_generative_2014}, reformulating stochastic programming as minmax optimization \cite{chinchilla_optimization-based_2019,ChinchillaHespanhaDec20,bandi_tractable_2012}, or producing robustness of a stochastic program with respect to the probability distribution \cite{rahimian_distributionally_2019}. Minmax optimization is also known as minimax or robust optimization.  Minmax optimization is related to bilevel optimization \cite{dempe2002foundations,colson_overview_2007,beck_brief_2023}, as minmax optimization can sometimes be used to find solutions to bi-level optimization when the inner and outer maximization have antisymmetric criteria.

Finding a global minmax point for nonconvex-nonconcave problems is generally difficult, and one has to settle for finding a local minmax point. Surprisingly, only recently a first definition of unconstrained local minmax was proposed in \cite{jin_what_2019}, and the definition of constrained local minmax in \cite{dai_optimality_2020}. 

In optimization, Newton's method consists of applying Newton's root finding algorithm to obtain a point for which the gradient is equal to zero. In convex minimization, the only such points are (global) minima \cite[Theorem 2.5]{nocedal_numerical_2006}. Likewise, in convex-concave minmax optimization (meaning that the function is convex in the minimizing variable and concave in the maximizing variable), Van Neumans's Theorem \cite{v1928theorie} states that the min and the max commute, which implies that the only points for which the gradient is zero are solutions to the optimization. This means that both in convex minimization and convex-concave minmax optimization, using Newton's root finding method to obtain a point for which the gradient is zero is a good strategy to solve the optimization problem.
	
In contrast, for nonconvex minimization or nonconvex-nonconcave minmax optimization, the gradient can be zero at a point even if such point is not a solution to the optimization. So using Newton's root finding method to obtain a point for which the gradient is equal to zero is not a good strategy to find a (local) solution to the optimization. The foundation of our work involves examining Newton's method iterations through the lens of dynamical systems. By analyzing the linearization of the dynamics, we deduce that every equilibrium point (i.e., a point with a zero gradient) is locally asymptotically stable, which is why the iterations of the Newton's method are attracted to them. \textbf{The key contribution of this article} is to study how to modify the Newton's method such that it is only attracted to (local) solutions of the optimization, and repelled by any equilibrium points that are not (local) solutions.

Our paper's initial contribution is an examination of the local convergence properties of a modified Newton's method for minimization in which a multiple of the identity matrix is added to the Hessian such that the resulting matrix is positive definite \cite[Chapter 3.4 ``Newton's method with Hessian modification'']{nocedal_numerical_2006}. This modified Newton has two crucial properties. First, it can be shown to be equivalent to a sequence of local quadratic approximations to the minimization problem. Second, we demonstrate that incorporating this additive matrix renders every non-local-minimum equilibrium point unstable while maintaining stability for local minima. This simple modification ensures that the modified Newton's method has the property we refer to in the previous paragraph that the iterations are only attracted to equilibrium points that are local minima, and repelled by other equilibrium points. Utilizing analogous techniques, we establish similar results for primal-dual interior-point methods in constrained minimization. These findings (outlined in Section \ref{sc:minimization}) directly inspire the development of new Newton-type algorithms for minmax optimization.

 Drawing inspiration from the Newton's method for minimization, we develop Newton-type algorithms for minmax optimization, conceptualized as a series of local quadratic approximations of the minmax problem. For convex-concave functions, this quadratic approximation is just the second-order Taylor expansion, which leads to the (unmodified) Newton's method, accompanied by its well-established local convergence properties. However, for nonconvex-nonconcave functions, it is necessary to add scaled identity matrices to ensure that the local approximations possess finite minmax solutions (without mandating convex-concavity). Additive terms meeting this criterion are said to satisfy the Local Quadratic Approximation Condition (LQAC). Employing a sequence of local quadratic approximations acts as a surrogate for guiding the modified Newton's method towards a solution at each step. Nevertheless, we demonstrate that, unlike minimization, local quadratic approximation-based modifications are not enough to ensure that the algorithm can only converge towards local minmax points. Our minmax findings reveal that additional conditions are required on the modification to unsure the algorithm's convergence to an equilibrium point is guaranteed only if that point is a local minmax. To streamline the presentation, we first introduce this result in Section \ref{sc:minmax-unconstrained} for unconstrained minmax, then expand it to primal-dual interior-point methods for constrained minmax in Section \ref{sc:minmax-constrained}.

The conditions described above to establish the equivalence between local minmax and local asymptotic stability of the equilibria to a Newton-type iteration are directly used to construct a numerical algorithm to find local minmax. By construction, when this algorithm converges to an equilibrium point, its is guaranteed to obtain a local minmax. One could be tempted to think that the issue of getting instability
for the equilibria that are not local minima (in Theorems \ref{th:stab-newton-minimization} and \ref{th:stab-ip-minimization})
or that are not local minmax (in Theorems \ref{th:stab-newton-minmax} and \ref{th:stab-ip-minmax}) is just a
mathematical curiosity, which in practice makes little
difference. However, our numerical examples in sections \ref{sc:alg-development-numerical-examples-benchmark} and \ref{sc:alg-development-numerical-examples-MPC} show otherwise. Most
especially the pursuit-evasion MPC problem in section \ref{sc:alg-development-numerical-examples-MPC}, where finding a local minmax (rather than an equilibrium that is not local minmax) leads to a completely different control. Specifically, if the instability property is not guaranteed, the evader is not able escape from the pursuer. It is important to emphasize that our results fall shy of guaranteeing \emph{global} asymptotic convergence to a local minmax, as the algorithm could simply never converge. However, our numerical examples also show that our algorithm seems to enjoy good global convergence properties in practice. Using the results of this paper, we have created a solver for minmax optimization and included it in the solvers of \texttt{TensCalc}\footnote{\url{https://github.com/hespanha/tenscalc}} \cite{hespanha_tenscalc_2022}; this solver was used to generate the numerical results  we present.

\paragraph{Notation:} The set of real numbers is denoted by $\eR$. Given a vector $v\in \eR^n$, its transpose is denoted by $v'$. The operation $\diag(v)$ creates a matrix with diagonal elements $v$ and off-diagonal elements $0$. The matrix $I$ is the identity, $\oneb$ is the matrix of ones and $\zerob$ the matrix of zeros; their sizes will be provided as subscripts whenever it is not clear from context. 
If a matrix $A$ only has real eigenvalues, we denote by $\lambda_{min}(A)$ and $\lambda_{max}(A)$ its smallest and largest eigenvalues.  The inertia of $A$ is denoted by $\inertia(A)$, and is a 3-tuple with the number of positive, negative and zero eigenvalues of $A$.

Consider a differentiable function $f:\eR^n\times \eR^m \mapsto \eR^p$. The Jacobian (or gradient if $p=1$) at a point $(\bar x,\bar y)$ according to the $x$ variable is a matrix of size $n\times p$ and is denoted by $\grad_xf(\bar x,\bar y)$, and analogously for the variable $y$. When $p=1$ and $f(\cdot)$ is twice differentiable, we use the notation $\grad_{yx}f(\bar x,\bar y):=\grad_y\big(\grad_xf\big)(\bar x,\bar y)$ which has sizes $m\times n$. We use analogous definition for $\grad_{xy}f(\bar x,\bar y)$, $\grad_{xx}f(\bar x,\bar y)$ and $\grad_{yy}f(\bar x,\bar y)$.

\subsection{Literature Review}

Traditionally, robust optimization focused on the convex-concave case, with three main methods. The first type of method is based on Von Neuman's minmax theorem \cite{v1928theorie} that states that the min and the max commute when the problem is convex-concave and the optimization sets are convex and compact. Solving the minmax then simplifies to finding a point that satisfies the first order condition. While there are many different methods to achieve this, many of them can be summarized by the problem of finding the zeros of a monotone operator \cite{ryu_primer_2016}. The second type of methods consists on reformulating the minmax as a minimization problem which has the same solution as the original problem. This is generally done using either robust reformulation through duality theory or tractable variational inequalities \cite{ben-tal_robust_2002,ben-tal_robust_2009,bertsimas_theory_2011,colson_overview_2007}. The third, cutting-set methods, solves a sequence of minimization where the constraint of each minimization is based on subdividing the inner maximization \cite{mutapcic_cutting-set_2009}.

Motivated by some of the shortcomings of these methods and the necessities of machine learning, research on minmax optimization started to study first-order methods based on variations of gradient descent-ascent. The results tend to focus on providing convergence complexity given different convexity/~concavity assumptions on the target function. We can divide these first order methods in three families. The first familie solves the minmax by (approximately) solving the maximization each time the value of the minimizer is updated. When this is done using first order methods, it is generally referred to as multi-step gradient descent ascent, unrolled gradient descent ascent or GDmax, and the minimizer is updated by a single gradient descent whereas the maximizer is updated by several gradient ascent steps. A second family uses single step, where the minimizer and maximizer are updated at each iteration. For both of these two first families, the gradient iterations can include variations such as using different step sizes for the minimization and maximization, or using momentum. A third family, which is completely different from what is described for other ones, is to include the gradient from different time steps in the computation, such as the past one (as in optimistic gradient descent-ascent), the midpoint between the current and future points (as in extra gradient descent-ascent) and at future point (as in proximal point). The literature on first-order methods is very extensive, and we refer to  \cite{jin_what_2019,nouiehed_solving_nodate,metz_unrolled_2017,mokhtari_unified_2020, lin_near-optimal_2020,liu_first-order_2020, nemirovski_prox-method_2004,fiez_gradient_2020,mertikopoulos_optimistic_2019} and the references within for the exposition on some of these methods and their convergence properties.

In recent years, researchers have also started to work on algorithms that use second order derivatives to determine the directions. These algorithm, in their major part have not attracted as much attention as first order methods. In the Learning with Opponent Learning Awareness (LOLA), the minimizer anticipates the play of the maximizer using the Jacobian of the maximizer's gradient \cite{foerster_learning_2018,letcher_stable_2021}. In competitive gradient descent, both minimizer and maximizer use the cross derivative of the Hessian to compute their direction \cite{schafer_competitive_2020}. In follow the ridge, the gradient ascent step is corrected by a term that avoids a drift away from local maxima \cite{wang_solving_2019}. In the total gradient descent-ascent, similarly to LOLA, the descent direction is computed by taking to total derivative of a function which anticipates the maximizer's response to the minimizer \cite{fiez_convergence_2019}. Finally, the complete Newton borrows ideas from follow the ridge and total gradient to obtain a Newton method which prioritizes steps towards local minmax \cite{zhang_newton-type_2020}. These three last algorithms are shown to only converge towards local minmax under some conditions, but in none of them it is addressed the issue of how to adjust the Hessian far away from a local minmax point.

Recently, some second order methods have been proposed for the nonconvex-strongly-concave case, where the minimizer update is a descent direction of the objective function at its maximum. They either use cubic regularization \cite{luo_finding_2021,chen_escaping_2021} or randomly perturb the Hessian \cite{huang_efficiently_2022}. Because of some of the assumptions these work make, most important the strong-concavity of the objective function with respect to the maximizer, they are able to establish complexity analysis and guarantee. It is also worth mention that these algorithms are all multi-step based, meaning they (approximately) solve the maximization between each update of the minimizer, whereas our algorithm updates both the minimizer and the maximizer simultaneously.

\section{Minimization} \label{sc:minimization}
 
Let $f:\Xcal \to \eR$ be a twice continuously differentiable cost function defined in a set $\Xcal\subset \eR^{n_x}$ where $n_x$ is a positive integer \footnote{The subscript $_x$ is used to indicate that $n_x$ refers to the size of the variable $x$. We introduce this notation now in anticipation of Section \ref{sc:minmax} where we have both minimization and maximization variables.}, and consider the minimization problem
\begin{equation} \label{eq:minimization}
\min_{x\in \Xcal} f(x).
\end{equation}
We recall that a point $x^*$ is called a local minimum of $f(\cdot)$ if there exist $\delta>0$ such that $f(x^*)\le f(x)$ for all $x\in\{x\in\Xcal:\norm{x-x^*}<\delta\}$. We will study the property of Newton type algorithms to solve \eqref{eq:minimization} in two distinct cases, when $\Xcal=\eR^{n_x}$ and when $\Xcal$ is defined by equality and inequality constraints.

\subsection{Unconstrained minimization} \label{sc:minimization-unconstrained}

Let $\Xcal=\eR^{n_x}$, which is referred to as unconstrained minimization in the literature, in which case \eqref{eq:minimization} simplifies to 
\begin{equation} \label{eq:minimization_unco}
	\min_{x\in \eR^{n_x}} f(x).
\end{equation}
If $f(\cdot)$ is twice continuously differentiable in a neighborhood of a point $x$ and $\grad_xf(x)=\zerob$ and $\grad_{xx}f(x)\succ0$, then $x$ is a local minimum of $f(\cdot)$ \cite[Chapter 2]{nocedal_numerical_2006}.

An extremely popular method to solve a minimization problem is to use Newton's root finding method to obtain a point $x$ such that $\grad_xf(x)=\zerob$. In its most basic form, the algorithm's iterations are given by
\begin{equation}\label{eq:Newton-min}
	x^+=x+d_x=x-\grad_{xx}f(x)\inv \grad_x f(x).
\end{equation}
where we use the notation $x^+$ to designate the value of $x$ at the next iteration. Newton's method biggest advantage is that it converges very fast near any point that satisfies the first order condition $\grad_xf(x)=\zerob$: at least linearly but possibly superlinearly when the function is Lipschitz \cite[Theorem 3.6]{nocedal_numerical_2006}. However, this is also precisely Newton's method biggest limitation for nonconvex minimization, because it does not distinguish a local minimum from any other point satisfying the first order condition. Let us further illustrate this limitation with an example.
\begin{example} \label{example1}
	Consider the optimization,
	\begin{align}\label{eq:example-u3}
		\min_{x\in\eR} x^3-3x,
	\end{align}
	for which $\forall x\in\eR$,
	\begin{align*}
		&f(x):= x^3-3x, &
		&\grad_x f(x)=3x^2-3, &
		&\grad_{xx}f(x)=6x.
	\end{align*}
	The corresponding Newton iteration \eqref{eq:Newton-min} is of the form
	\begin{align*}
		x^+=x-\frac{3x^2-3}{6x},
	\end{align*}
	for which both the local minimum $x^{\min}:= 1$ and the local
	maximum $x^{\max}:= -1$ are locally asymptotically stable
	equilibria with superlinear convergence. Specifically, 
	\begin{align*}
		\begin{cases}
			& x_0>0 \Rightarrow x_k \to x^{\min}:= 1,  \text{(local minimum)},\\
			&  x_0<0 \Rightarrow x_k \to x^{\max}:= -1, \text{(local maximum)},\\
			& x_0=0 \Rightarrow \text{iteration fails since $\grad_{xx}f(x)=6x$ is not invertible.}
		\end{cases}
	\end{align*}	
	Moreover, the iteration never actually ``converges'' to the global
	``infimum'' $x\to-\infty$.  
\end{example}

In order to address this limitation, a widely used modification of Newton's method for unconstrained nonconvex optimization \cite[Chapter 3.4]{nocedal_numerical_2006}, is obtained by modifying the basic Newton method such that $d_x$ is obtained from solving the following local quadratic approximation to \eqref{eq:minimization}
\begin{align}
d_x&=\argmin_{\bar d_x} f(x)+\grad_xf(x)'\bar d_x + \frac{1}{2}\bar d_x \grad_{xx}f(x) \bar d_x +\frac{\epsilon_x(x)}{2} \norm{d_x}^2 \label{eq:minlqac}
\\&=\argmin_{\bar d_x} f(x)+\grad_xf(x)'\bar d_x + \frac{1}{2}\bar d_x (\grad_{xx}f(x)+\epsilon_x(x)I) \bar d_x \nonumber
\\&=-(\grad_{xx}f(x)+\epsilon_x(x)I)\inv \grad_x f(x) \nonumber
\end{align}
with $\epsilon_x(x)\ge0$ chosen such that $(\grad_{xx}f(x)+\epsilon_x(x)I)$ is positive definite. For twice differentiable strongly-convex functions we can choose $\epsilon_x(x)=0$ and this corresponds to the classical Newton's method. However, when $f(\cdot)$ is not strongly-convex, the minimization in \eqref{eq:minlqac} is only well-defined if  $\grad_{xx}f(x)+\epsilon_x(x)I$ is positive definite, which requires selecting a strictly positive value for $\epsilon_x(x)$, leading to a modified Newton's method. Regardless of whether $f(\cdot)$ is convex, the positive definiteness of $\grad_{xx}f(x)+\epsilon_x(x)I$ guarantees that $d_x ' \grad_x f(x)=-\grad_x f(x)(\grad_{xx}f(x)+\epsilon_x(x)I)\inv\grad_x f(x)<0$  and therefore $d_x$ is a descent direction at $x$ \cite{nocedal_numerical_2006}. The corresponding Newton iteration to obtain a local minimum is then given by
\begin{equation}\label{eq:newtonmin}
x^+=x+d_x=x-(\grad_{xx}f(x)+\epsilon_x(x)I)\inv \grad_x f(x).
\end{equation}
Let us analyze how this modification impacts the convergence in our previous example.

\setcounter{example}{0}
\begin{example}[Continuation]
	For the optimization in~\eqref{eq:example-u3}, the modified
	Newton step in~\eqref{eq:newtonmin} becomes
	$x^+=x-\frac{3x^2-3}{6x+\epsilon_x(x)}$
	with $\epsilon_x(\cdot)$ such that
	\begin{align}\label{eq:epsilon-example-u3}
		\begin{cases}
			\epsilon_x(x) \ge 0 & x>0,\\
			\epsilon_x(x) > -6x & x\le 0.
		\end{cases}
	\end{align}
	In this case, 
	\begin{align*}
		\begin{cases}
			x_0>x^{\max}:= -1 \Rightarrow &x_k \to x^{\min}:= 1 \text{(local minimum)},\\
			x_0<x^{\max}:= -1 \Rightarrow &x_k \to -\infty \text{ (global ``infimum'')},\\
			x_0=x^{\max}:= -1 \Rightarrow &x_k=x^{\max}, \forall k  \text{(unstable equilibrium)}.
		\end{cases}
	\end{align*}
	Selecting the function $\epsilon_x(\cdot)$ with $\epsilon_x(\cdot) = 0$ around $x^{\min}$ results in superlinear convergence to $x^{\min}$, but if $\epsilon_x(\cdot) > 0$, the convergence is only linear. For example, picking $\epsilon_x(x)=-6x + \eta$   with $\eta>0$, \eqref{eq:epsilon-example-u3} holds for all $x$, but the
	modified Newton step in~\eqref{eq:newtonmin} becomes
	$x^+=x-\frac{3x^2-3}{\eta}$, which is just a gradient descent.
\end{example}

\bigskip

The following result generalizes the conclusion from the previous example by establishing that the positive definiteness of $\grad_{xx}f(x)+\epsilon_x(x)I$ not only guarantees that $d_x$ is a descent direction, but also that every locally asymptotically stable (LAS) equilibrium point of the Newton iteration \eqref{eq:newtonmin} is a local minimum. 

\begin{theorem}[Stability and instability of modified Newton method for unconstrained minimization] \label{th:stab-newton-minimization}
	Let $x$ be an equilibrium point in the sense that $\grad_{x}f(x)=\zerob$. Assume that $\grad_{xx}f(x)$ is invertible and that  $\grad_{xx}f(\cdot)$ is differentiable in a neighborhood around $x$. Then for any function $\epsilon_x(\cdot)$ that is constant in a neighborhood around $x$ and satisfies $\grad_{xx}f(x)+\epsilon_x(x)I\succ \zerob$ one has that if:
	\begin{enumerate}
		\item[i)] $x$ is a local minimum of \eqref{eq:minimization_unco}, then it is a LAS equilibrium of \eqref{eq:newtonmin}.
		\item[ii)] $x$ is not a local minimum of \eqref{eq:minimization_unco}, then it is an unstable equilibrium of \eqref{eq:newtonmin}.
	\end{enumerate} 
\end{theorem}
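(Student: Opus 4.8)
The plan is to treat the modified Newton iteration \eqref{eq:newtonmin} as a discrete-time dynamical system $x^+ = F(x)$ with $F(x) = x - (\grad_{xx}f(x)+\epsilon_x(x)I)\inv \grad_x f(x)$, and to analyze the Jacobian $DF(x)$ at the equilibrium point $x$. Since $x$ is an equilibrium, $\grad_x f(x) = \zerob$, so when differentiating the product $(\grad_{xx}f(x)+\epsilon_x(x)I)\inv \grad_x f(x)$ by the product rule, the term where the derivative falls on the inverse matrix vanishes (it multiplies $\grad_x f(x) = \zerob$), and since $\epsilon_x(\cdot)$ is constant in a neighborhood of $x$ its derivative contributes nothing either. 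Hence the only surviving term is the one where the derivative falls on $\grad_x f$, giving $D\big[\grad_x f\big](x) = \grad_{xx}f(x)$. Writing $H := \grad_{xx}f(x)$ and $M := H + \epsilon_x(x) I \succ \zerob$, this yields the clean linearization
\begin{equation*}
DF(x) = I - M\inv H = M\inv(M - H) = M\inv\big(\epsilon_x(x) I\big) = \epsilon_x(x)\, M\inv.
\end{equation*}
Wait — more carefully, $I - M\inv H$ need not simplify to $\epsilon_x(x) M\inv$ unless $\epsilon_x(x)\neq 0$; in general $I - M\inv H = M\inv(M-H) = M\inv(\epsilon_x(x)I) = \epsilon_x(x) M\inv$, which is in fact always valid. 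So $DF(x) = \epsilon_x(x) M\inv$ with $M \succ \zerob$.

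The key algebraic fact I would then invoke is Sylvester's law of inertia (or a direct congruence argument): since $M \succ \zerob$, the matrix $M\inv H = M\inv H$ is similar to $M^{-1/2} H M^{-1/2}$, which is symmetric and congruent to $H$, so $M\inv H$ has the same inertia as $H$. The eigenvalues of $DF(x) = I - M\inv H$ are therefore $1 - \mu_i$ where $\mu_i$ ranges over the (real) eigenvalues of $M\inv H$, which share signs with the eigenvalues of $H$. For local asymptotic stability of the linearization we need every such eigenvalue to have modulus strictly less than $1$, i.e. $0 < \mu_i < 2$ for all $i$; for instability it suffices that some $|1-\mu_i| > 1$, i.e. some $\mu_i < 0$ (equivalently $H$ has a negative eigenvalue), or some $\mu_i > 2$.

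For part (i): if $x$ is a local minimum then $H = \grad_{xx}f(x) \succeq \zerob$, and since $H$ is assumed invertible, $H \succ \zerob$, so all $\mu_i > 0$. I must also rule out $\mu_i \geq 2$: here I use that $M = H + \epsilon_x(x) I$ with $\epsilon_x(x) \geq 0$, so $M \succeq H \succ \zerob$, hence $M\inv \preceq H\inv$ in the Loewner order, which gives $M^{-1/2} H M^{-1/2} \preceq I$, i.e. $\mu_i \leq 1 < 2$. Thus the spectral radius of $DF(x)$ is strictly less than $1$, and by the standard discrete-time Lyapunov/Hartman–Grobman argument $x$ is LAS for \eqref{eq:newtonmin}. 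For part (ii): if $x$ is not a local minimum, then since $\grad_x f(x) = \zerob$ and $H$ is invertible, the second-order necessary condition for a local minimum ($H \succeq \zerob$) must fail, so $H$ has at least one strictly negative eigenvalue; by the inertia argument $M\inv H$ has a negative eigenvalue $\mu < 0$, so $DF(x)$ has the eigenvalue $1 - \mu > 1$. An eigenvalue of modulus strictly greater than $1$ makes the linearization — and hence (by the unstable-manifold theorem for maps, which requires the $C^1$ smoothness of $F$ near $x$, guaranteed by differentiability of $\grad_{xx}f$) the nonlinear iteration — unstable at $x$.

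The main obstacle I anticipate is not the linearization computation, which is short, but two points of rigor: first, correctly justifying that the derivative of $M(x)\inv$ drops out, which hinges on $\grad_x f(x) = \zerob$ together with the local constancy of $\epsilon_x$ — this is where the hypotheses are used — and second, upgrading the spectral conclusions about $DF(x)$ to genuine (nonlinear) asymptotic stability and instability. For the LAS direction one can cite the standard result that $\rho(DF(x)) < 1$ implies local asymptotic stability of the fixed point; for the instability direction one cites the companion result that if $DF(x)$ has an eigenvalue of modulus $> 1$ then the fixed point is unstable (e.g. via the Perron–Hartman stable/unstable manifold theorem for diffeomorphisms, or directly by constructing a Lyapunov function on the expanding subspace). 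Both require only that $F$ be $C^1$ in a neighborhood of $x$, which follows from the assumed differentiability of $\grad_{xx}f(\cdot)$ and invertibility of $\grad_{xx}f(x)$ (ensuring $M(x)\inv$ is $C^1$ nearby). I would also remark that this recovers the behavior seen in the continuation of Example \ref{example1}, where $x^{\max} = -1$ is exactly the non-minimum equilibrium rendered unstable.
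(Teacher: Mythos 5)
Your proposal is correct and follows essentially the same route as the paper: compute the Jacobian $I-(\grad_{xx}f(x)+\epsilon_x(x)I)\inv\grad_{xx}f(x)$ at the equilibrium (using $\grad_xf(x)=\zerob$ and local constancy of $\epsilon_x$ to kill the derivative of the inverse), then read off stability and instability from the spectrum via the inertia of $\grad_{xx}f(x)$. Your packaging is slightly more direct — the identity $DF(x)=\epsilon_x(x)(\grad_{xx}f(x)+\epsilon_x(x)I)\inv$ gives the eigenvalues explicitly as $\epsilon_x(x)/(\lambda_i+\epsilon_x(x))$, whereas the paper reaches the same conclusion through a continuity argument on the singularity of $\grad_{xx}f(x)+\mu\epsilon_x(x)I$ for $\mu\in[0,2]$ — but the substance is identical.
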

The theorem's first implication is that if the modified Newton iteration starts sufficiently close to a strict local minimum, it will converge at least linearly fast to it. One could think that it would always be preferable to have $\epsilon_x(x)=0$ if $\grad_{xx}f(x)\succ0$, in which case not only stability can be trivially obtained but also that the Newton method has superlinear convergence if $f(\cdot)$ is Lipschitz \cite[Theorem 3.6]{nocedal_numerical_2006}. However, in practice, there are situations for which one might want to take $\epsilon_x(x)>0$. A typical case happens if the smallest eigenvalue of $\grad_{xx}f(x)$ is positive but very small, which might bring numerical issues when computing the Newton step $\grad_{xx}f(x)\inv\grad_xf(x)$. This issue can be fixed by taking $\epsilon_x(x)>0$, and Theorem \ref{th:stab-newton-minimization} guarantees that doing so will not impair (at least locally) the algorithm's capacity to converge towards a local minimum.

The theorem's second implication is, in a way, even more relevant than the first one. As we mentioned earlier, the regular Newton's method (meaning, with $\epsilon_x(x)=0$) is infamously known to be attracted to any point that satisfies $\grad_{x}f(x)=\zerob$, regardless of whether it is a local minimum, a saddle point, or a local maximum. What Theorem \ref{th:stab-newton-minimization} is essentially saying is that the modified Newton is only attracted to local minima, and that any other equilibrium point repels the iteration. In essence, this means that the modified Newton's method cannot converge towards a point that is not a local minimum, thus fixing one of the biggest drawbacks of the regular Newton's method.

While it goes beyond the point of this article, notice that for large values of $\epsilon_x(x)$,
	$$
		x^+=x-(\grad_{xx}f(x)+\epsilon_x(x) I)\inv\grad_{x}f(x)\approx x- \epsilon_x(x)\inv\grad_{x}f(x)
$$
	which shows that the modified Newton's step \eqref{eq:newtonmin} essentially
	becomes a gradient descent step with a small step size
	$\epsilon_x(x)\inv$. This also shows that, by keeping
	$\epsilon_x(x)\inv$ sufficiently large, the iteration \eqref{eq:newtonmin} could be
	made descent with respect to the cost. However, this would
	be achieved at the cost of losing superlinear
	convergence.

\begin{proof}[Proof of Theorem \ref{th:stab-newton-minimization}]
	From our assumption that $\grad_{xx}f(x)$ is invertible, $x$ is a local minimum if and only if $\grad_{xx}f(x)\succ0$. This comes from the second order necessary condition for minimization \cite[Chapter 2]{nocedal_numerical_2006}.

	Let us now prove the stability and instability properties. The first step in our analysis is to calculate the Jacobian of $(\grad_{xx} f(x)+\epsilon_x(x)I)\inv \grad_x f(x)$ that appears in \eqref{eq:newtonmin} at an equilibrium point $x$. Using the differentiability of $\grad_{xx}f(\cdot)$ and that $\epsilon_x(\cdot)$ is constant in a neighborhood of $x$, we obtain that
	\begin{multline*}
	\grad_{x}\Big((\grad_{xx} f(x)+\epsilon_x(x)I)\inv \grad_{x} f(x)\Big)=(\grad_{xx} f(x)+\epsilon_x(x)I)\inv \grad_{xx} f(x) + \\\sum_{i=1}^N  \grad_{x}[(\grad_{xx} f(x)+\epsilon_x(x)I)\inv]_i\grad_{x} f(x)^{(i)}
	\end{multline*}
	where  $\grad_{x} f(x)^{(i)}$ is the i\textsuperscript{th} element of $\grad_{x} f(x)$ and $[(\grad_{xx} f(x)+\epsilon_x(x)I)\inv]_i$ is the i\textsuperscript{th} column of $(\grad_{xx} f(x)+\epsilon_x(x)I)\inv$. Since $(\grad_{xx} f(x)+\epsilon_x(x)I)$ is positive definite, $\grad_x[(\grad_{xx} f(x)+\epsilon_x(x)I)\inv]_i$ is well defined and since $x$ is an equilibrium point, $\grad_{x} f(x)^{(i)}=0$ for $i\in\{1\dots N\}$ and therefore the Jacobian of right-hand side of \eqref{eq:newtonmin} is given by
	\begin{equation}\label{eq:jacob_min}
	\grad_{x}\Big(x-(\grad_{xx} f(x)+\epsilon_x(x)I)\inv \grad_{x} f(x)\Big)=I-(\grad_{xx} f(x)+\epsilon_x(x)I)\inv \grad_{xx} f(x). 
	\end{equation}

	The main argument of the proof is based on the following result. Let $v$ be an eigenvector associated to an eigenvalue $\rho$ of \eqref{eq:jacob_min}. Then
	\begin{align}
	\Big(I-(\grad_{xx} f(x)+\epsilon_x(x)I)\inv \grad_{xx}f(x)\Big)v&=\rho v \nonumber \\
	\iff (1 - \rho) v &= (\grad_{xx} f(x)+\epsilon_x(x)I)\inv \grad_{xx}f(x) v \nonumber \\
	\iff \Big(\rho \grad_{xx}f(x) + (\rho - 1) \epsilon_x(x)I\Big) v &= \zerob \label{eq:eig_jacob_min}
	\end{align}
	Therefore, $ \rho $ is an eigenvalue of \eqref{eq:jacob_min} if and only if $ \rho \grad_{xx}f(x) + (\rho - 1) \epsilon_x(x)I$ is singular.
	
	We remind the reader that given a dynamical system, if the system's dynamic equation is continuously differentiable, a point is a LAS equilibrium point if all the eigenvalues of the linearized system are inside the unit circle. Conversely, if at least one of the eigenvalues of the linearized system is outside the unit circle, then the system is unstable \cite[Chapter 8]{hespanha_linear_2018}.

	From \eqref{eq:eig_jacob_min}, $\rho=0$ is an eigenvalue if and only if $\epsilon_x(x)=0$, which, by construction, can only happen if $x$ is a local minimum, in which case $x$ is a LAS equilibrium point of \eqref{eq:newtonmin}, as expected. 
	
	For $\rho\neq0$, let us rewrite this expression as $\grad_{xx}f(x) + \mu \epsilon_x(x)I$ with $ \mu := 1 - 1/\rho$. We conclude that $x$ is a LAS equilibrium point of \eqref{eq:newtonmin} if $\grad_{xx}f(x) + \mu \epsilon_x(x)$ is nonsingular $\forall \mu \in [0, 2]$. Conversely, $x$ is an unstable equilibrium point of \eqref{eq:newtonmin} if $\grad_{xx}f(x) + \mu \epsilon_x(x)$ is singular for some $\mu \in [0, 2]$.
	
	If $x$ is a local minimum, then $\lambda_{min}(\grad_{xx}f(x))>0$. As $\epsilon_x(x)>0$, we conclude that  $\lambda_{min}(\grad_{xx}f(x) + \mu \epsilon_x(x)I)>0$  for every $\mu\ge0$ and therefore $x$ is a LAS equilibrium point of \eqref{eq:newtonmin}. Conversely, if $x$ is not a local minimum then $\lambda_{min}(\grad_{xx}f(x))<0$. By construction of $\epsilon_x(x)$, we have that $\lambda_{min}(\grad_{xx}f(x) + \mu \epsilon_x(x)I)>0$, which, by continuity of the eigenvalue, implies $\exists \mu \in (0,1)$ such that $\lambda_{min}(\grad_{xx}f(x) + \mu \epsilon_x(x)I)=0$. Therefore $x$ is an unstable equilibrium point of \eqref{eq:newtonmin}. 
\end{proof}

\subsection{Constrained minimization} \label{sc:minimization-constrained}

Our results from the previous section can also be extended to consider the case with more general constraint with the minimization set $\Xcal$ involving equality and inequality constraints of the form $$\Xcal=\{x\in \eR^n: G_x(x)=\zerob, F_x(x)\le \zerob\}$$ where the functions $G_x:\eR^{n_x}\to \eR^{l_x}$ and $F_x:\eR^{n_x}\to\eR^{m_x}$ are all twice continuously differentiable \footnote{Similar to $n_x$, the subscript $_x$ is used to indicate that the functions $G_x(\cdot)$ and $F_x(\cdot)$ are associated to the minimization variable $x$. We introduce this notation now in anticipation of Section \ref{sc:minmax} where we have both minimization and maximization variables.}.
It will be convenient for the development of the primal-dual interior-point method to use slack variables and rewrite \eqref{eq:minimization} as
\begin{equation}\label{eq:minimization_slack}
\min_{x,s_x:G_x(x)=\zerob,F_x(x)+s_x=\zerob,s_x\ge \zerob} f(x).
\end{equation}
where $s_x\in\eR^{m_x}$.

Similar to what we have in the unconstrained minimization, we want a second order conditions to determine whether a point is a local minimum. Consider the function 
$$L(z)=f(x)+\nu_x' G_x(x) + \lambda_x'(F_x(x)+s_x),$$
where we use the shorthand notation $z:=(x,s_x,\nu_x,\lambda_x)$. $L(z)$ is essentially the Lagrangian of \eqref{eq:minimization_slack}. In order to present the second order conditions, we need to define two concepts, the linear independence constraint qualification and strict complementarity \cite[Definitions 12.4 and 12.5]{nocedal_numerical_2006}.
\begin{definition}[LICQ and strict complementarity]
	Let the set of active inequality constraints for the minimization be defined by
	\begin{equation*}
	\Acal_x(x)=\{i=1,\dots,m_x : F_x^{(i)}(x)=0\}
	\end{equation*}
	where $F_x^{(i)}(x)$ denote the i\textsuperscript{th} element of $F_x(x)$. Then:
	\begin{itemize}
		\item  The linear independence constraint qualification (LICQ) is said to hold at $z$ if the vectors in the set
		\begin{equation*}
		\{\grad_x G_x^{(i)}(x), i=1,\dots,l_x\}\bigcup\{\grad_x F_x^{(i)}(x), i \in \Acal_x(x) \}
		\end{equation*}
		are linearly independent.
		\item Strict complementarity is said to hold at $x$ if $\lambda_x^{(i)}>0 \ \forall i\in \Acal_x(x)$ 
	\end{itemize}
\end{definition}

We have almost all the ingredients to present the second order condition for constrained minimization. For unconstrained minimization, a sufficient condition for a point $x$ to be a local minimum is that $\grad_{x}f(x)=0$ and $\grad_{xx}f(x)\succ0$. If it were not for the inequality constraints in \eqref{eq:minimization_slack}, we would be able to state the second order conditions using gradients and Hessians of $L(z)$. The inequality constraints make the statement a bit more complicated. The role of the gradient will be played by
\begin{equation}\label{eq:kkt_min}
g(z,b):=\begin{bmatrix}
\grad_{x} L(z) \\
\lambda_x \odot s_x - b \oneb \\
G_x(x)  \\
F_x(x) +s_x
\end{bmatrix}
\end{equation}
with $\odot$ denoting the element wise Hadamard product of two vectors and $b\ge0$ the barrier parameter (its role will be explained shortly). The role of $\grad_{xx}f(x)$ in the unconstrained minimization will be played by the matrix
\begin{equation}
H_{zz}f(z)=
\begin{bmatrix}
\grad_{xx} L(z) & 0 & \grad_{x} G_x(x) & \grad_{x}F_x(x) \\
0 & \diag(\lambda_x) & 0 & \diag(s_x^{1/2}) \\
\grad_{x} G_x(x)'& 0 & 0 & 0\\
\grad_{x}F_x(x)'& \diag(s_x^{1/2}) &0 & 0  \\
\end{bmatrix}.
\end{equation}
We also remind the reader that the inertia $\inertia(A)$ of a symmetric matrix $A$ is a 3-tuple with the number of positive, negative and zero eigenvalues of $A$.

\begin{proposition}[Second order sufficient conditions for constrained minimization] \label{prop:constrsecondorder_min}
	Let $z$ be an equilibrium point in the sense that $g(z,0)=\zerob$ with $\lambda_x,s_x\ge\zerob$. If the LICQ and strict complementarity hold at $z$ and
	\begin{equation} \label{eq:second_order_inertia_constrained_min}
	\inertia( H_{zz}f(z))=(n_x+m_x,l_x+m_x,0)
	\end{equation}
	then $x$ is a local minimum of \eqref{eq:minimization_slack}. 
\end{proposition}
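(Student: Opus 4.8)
\medskip
\noindent\emph{Proof proposal.}
My plan is to reduce the inertia condition \eqref{eq:second_order_inertia_constrained_min} to the classical second-order sufficient condition for the slack problem \eqref{eq:minimization_slack}, and then invoke the standard result \cite[Theorem 12.6]{nocedal_numerical_2006}. First I would use strict complementarity together with $g(z,0)=\zerob$ to partition $\{1,\dots,m_x\}$ into the active set $\Acal_x(x)$, on which $F_x^{(i)}(x)=0$, $s_x^{(i)}=0$ and $\lambda_x^{(i)}>0$, and its complement $\mathcal I$, on which $s_x^{(i)}>0$ and $\lambda_x^{(i)}=0$; write $m_{\Acal}$ and $m_{\mathcal I}$ for the respective cardinalities, so that $m_{\Acal}+m_{\mathcal I}=m_x$. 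Reordering the rows and columns of $H_{zz}f(z)$ accordingly, the slack rows/columns indexed by $i\in\Acal_x(x)$ carry only the positive diagonal entry $\lambda_x^{(i)}$ (the coupling $(s_x^{(i)})^{1/2}$ to the corresponding constraint row disappears because $s_x^{(i)}=0$) and are therefore decoupled from the rest; removing them contributes $(m_{\Acal},0,0)$ to the inertia. For each $i\in\mathcal I$, the pair of rows/columns $(s_x^{(i)},\lambda_x^{(i)})$ forms an invertible $2\times2$ block with zero diagonal and off-diagonal entry $(s_x^{(i)})^{1/2}>0$, of inertia $(1,1,0)$; a short computation shows that its Schur complement in what remains vanishes, because these blocks couple to the rest only through $\grad_x F_x^{(i)}(x)$ in the $x$-block and the zero diagonal of the inverse $2\times2$ blocks makes the correction cancel. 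By additivity of inertia under Schur complements, this leaves $\inertia(H_{zz}f(z))=(m_{\Acal},0,0)+(m_{\mathcal I},m_{\mathcal I},0)+\inertia(M)$, where $M=\bigl[\begin{smallmatrix}\grad_{xx}L(z) & J' \\ J & \zerob\end{smallmatrix}\bigr]$ and $J$ is the $(l_x+m_{\Acal})\times n_x$ Jacobian whose rows are $\grad_x G_x^{(i)}(x)'$ for $i=1,\dots,l_x$ and $\grad_x F_x^{(i)}(x)'$ for $i\in\Acal_x(x)$.

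Next I would invoke LICQ, which makes $J$ of full row rank $l_x+m_{\Acal}$, so the standard inertia identity for symmetric saddle-point matrices gives $\inertia(M)=\inertia\bigl(Z'\grad_{xx}L(z)Z\bigr)+(l_x+m_{\Acal},\,l_x+m_{\Acal},\,0)$, where the columns of $Z$ form a basis of $\ker J$. Substituting into the expression above and using $m_{\Acal}+m_{\mathcal I}=m_x$, the hypothesis \eqref{eq:second_order_inertia_constrained_min} turns out to be equivalent to $\inertia\bigl(Z'\grad_{xx}L(z)Z\bigr)=(n_x-l_x-m_{\Acal},0,0)$, that is, to $Z'\grad_{xx}L(z)Z\succ\zerob$.

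Finally I would identify $\ker J$ with the $x$-component of the tangent space of the constraints of \eqref{eq:minimization_slack} active at $(x,s_x)$ — the slack directions then being uniquely determined — and note that the Hessian in $(x,s_x)$ of the Lagrangian of \eqref{eq:minimization_slack} is block diagonal, with $\grad_{xx}L(z)$ in the $x$-block and zero in the $s_x$-block. Hence $Z'\grad_{xx}L(z)Z\succ\zerob$ is precisely the second-order sufficient condition for \eqref{eq:minimization_slack} (strict complementarity making the critical cone coincide with this tangent space), and \cite[Theorem 12.6]{nocedal_numerical_2006} then gives that $(x,s_x)$ is a strict local minimum of \eqref{eq:minimization_slack}. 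The step I expect to be the main obstacle is the inertia bookkeeping of the first paragraph — verifying that the Schur complements of the inactive $(s_x^{(i)},\lambda_x^{(i)})$ blocks really vanish, which is exactly where the $(s_x)^{1/2}$ scaling built into $H_{zz}f$ is used, and checking that the positive, negative, and zero counts add up to exactly $(n_x+m_x,\,l_x+m_x,\,0)$.
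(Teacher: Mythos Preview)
Your proposal is correct and the inertia bookkeeping you flag as the main obstacle does go through: for inactive $i$ the only coupling of the $2\times 2$ block to the rest sits in the $\lambda_x^{(i)}$-column, and since the inverse of $\bigl[\begin{smallmatrix}0 & c\\ c & 0\end{smallmatrix}\bigr]$ again has zero diagonal, the Schur-complement correction $B A^{-1}B'$ involves only the vanishing slack-$i$ column and is zero. The counts then add up exactly as you write.

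The paper takes a different route. Instead of splitting into active and inactive indices and peeling off blocks of $H_{zz}f(z)$ directly, it reformulates \eqref{eq:minimization_slack} with squared slacks $w_x\odot w_x$ in place of $s_x\ge 0$, so that the problem becomes purely equality-constrained; the classical second-order sufficient condition \cite[Theorem 12.5]{nocedal_numerical_2006} then applies uniformly to the augmented variable $(x,w_x)$, and the saddle-point inertia identity \cite[Theorem 16.3]{nocedal_numerical_2006} turns it into the inertia condition on a matrix $\tilde H_{zz}f(z)$. A final congruence (exploiting strict complementarity to build a diagonal scaling $\Xi$ with entries $1$, $1/2$, or $1/\sqrt{2}$) shows $\inertia(\tilde H_{zz}f(z))=\inertia(H_{zz}f(z))$. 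Your approach is more elementary and makes transparent exactly where strict complementarity enters (the active/inactive split), at the cost of a case analysis; the paper's $w_x\odot w_x$ trick avoids the case split and treats all inequality constraints on the same footing, at the cost of the extra congruence step. Both ultimately reduce to positive definiteness of the reduced Hessian on $\ker J$.
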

While this result is relatively well known, we present its proof in Appendix \ref{app:proof_propo_secon}. The proof also makes it easier to understand the proof of the second order sufficient conditions for constrained minmax optimization.

\subsubsection{Primal-dual interior-point method}

Let $d_z:=(d_x,d_s,d_\nu,d_\lambda)$ be the update direction for $z$, which will play an equivalent role to $d_x$ in the unconstrained case. A basic primal-dual interior-point method finds a candidate solution to  \eqref{eq:minimization_slack} using the iterations 
\begin{equation}\label{eq:basic_ip_min} 
z^+=z+\alpha d_z=z-\alpha \grad_{z}g(z,b)'\,\inv g(z,b)
\end{equation}
where the barrier parameter\footnote{ 
	The term ``barrier parameter'' comes from the connection between primal-dual and (log) barrier interior-point methods. This connection will become more clear bellow around \eqref{eq:minimization_slack_barier} as we deduce a local second order approximation of \eqref{eq:minimization_slack}. } $b$ is slowly decreased to $0$, so that $z$ converges to a root of $g(z,0)=\zerob$ while $\alpha\in (0,1]$ is chosen at each step such that the feasibility condition $\lambda_x,s_x>\zerob$ hold \cite[Chapter 19]{nocedal_numerical_2006}. This basic primal-dual interior-point has similar limitation as a (non-modified) Newton method for unconstrained minimization: it might converge towards an equilibrium point that is not a local minimum and $\grad_{z}g(z,b)$ might not be invertible. Similar to what we have done in the unconstrained case, we can modify this basic primal-dual interior-point method such that the update direction $d_z$ is obtained from a quadratic program that locally approximates \eqref{eq:minimization_slack}. The rest of this section will be spent mostly constructing such quadratic program.

Let us start with $\Xcal$ described only by equality constraints (\ie no $F_x(x)$ and no $s_x$), in which case $L(z)=f(x)+\nu_x' G_x(x)$.  Consider the optimization
\begin{align}
&\min_{\bar d_x:G_x(x)+\grad_xG_x(x)'\bar d_x=\zerob} L(z)+\bar d_x'\grad_xL(z)+\frac{1}{2} \bar d_x'(\grad_{xx}L(z)+\epsilon_x(z)I)\bar d_x \label{eq:min_equal_constr_QP}
\\&=\min_{\bar d_x:G_x(x)+\grad_xG_x(x)'\bar d_x=\zerob} L(z)+\bar d_x'\grad_xL(z)+\frac{1}{2} \bar d_x'\grad_{xx}L(z)\bar d_x + \frac{\epsilon_x(z)}{2} \norm{\bar d_x}^2, \nonumber
\end{align}
which locally approximates \eqref{eq:minimization_slack} around $(x,\nu_x)$
\footnote{
Notice that we use the second order linearization of the Lagrangian $L(z)$ as the cost function in \eqref{eq:min_equal_constr_QP}, not the one of $f(x)$. The justification is that, if $x^*$ is a local minimum of \eqref{eq:minimization_slack} with associated Lagrange multiplier $\nu^*$, then $x^*$ is also a local minimum of 
\begin{equation*}
\min_{x:G_x(x)=\zerob} f(x) + \nu_x^*\,'G_x(x).
\end{equation*}
Evidently, $\nu_x^*$ is not know in advance, so instead one uses the value of $\nu_x$ at the current iteration, which leads to the local approximation \eqref{eq:min_equal_constr_QP}.}. 
If $\grad_xG_x(x)$ is full column rank, we can choose $\epsilon_x(z)$ large enough such that the solution of \eqref{eq:min_equal_constr_QP} is well defined and unique. To show that, let us look at \eqref{eq:min_equal_constr_QP} as an optimization in its own right. Let $\bar d_\nu$ be the Lagrange multiplier  and define the function $\bar g(\bar d_x,\bar d_\nu)$ which is the function $g(z,b)$ defined in \eqref{eq:kkt_min} but now for problem \eqref{eq:min_equal_constr_QP}:
\begin{equation}\label{eq:kkt_min_QP}
\bar g(\bar d_x,\bar d_\nu):=\begin{bmatrix}
& \grad_xL(z)+(\grad_{xx}L(z)+\epsilon_x(z)I)\bar d_x + \grad_xG_x(x)\bar d_\lambda  \\
& G_x(x)+\grad_xG_x(x)'\bar d_x 
\end{bmatrix}.
\end{equation}
So if one takes any $\epsilon_x(z)\ge0$ large enough such that
\begin{equation}
\inertia\qty(\begin{bmatrix}
\grad_{xx} L(z)+\epsilon_x(z) & \grad_{x} G_x(x) \\
\grad_{x} G_x(x)'& 0 \\
\end{bmatrix})=(n_x,l_x,0),
\end{equation}
then we guarantee that any point $\bar d_x,\bar d_\nu$ that satisfies $\bar g(\bar d_x,\bar d_\nu)=\zerob$ will be a strict local minimum of \eqref{eq:min_equal_constr_QP} (see Proposition \ref{prop:constrsecondorder_min}). Moreover, this choice of $\epsilon_x(z)$ also guarantees that \eqref{eq:min_equal_constr_QP} is a strongly convex quadratic optimization, which, with the fact that $\grad_xG_x(x)$ is full column rank, means that the solution $(\bar d_x,\bar d_\nu)$ is \emph{unique}. Therefore, we will take the update directions $(d_x,d_\nu)$  to be the solution  $(\bar d_x,\bar d_\nu)$. Moreover, with some algebra, one can show that the solution to \eqref{eq:min_equal_constr_QP} is given by
\begin{align*}
\begin{bmatrix}
d_x \\
d_\nu 
\end{bmatrix}&=-\begin{bmatrix}
\grad_{xx} L(z)+\epsilon_x(z) & \grad_{x} G_x(x) \\
\grad_{x} G_x(x)'& 0 \\
\end{bmatrix}\inv 
\begin{bmatrix}
\grad_xL(z)\\
G_x(x)
\end{bmatrix}
\\&=-(\grad_{z} g(x,b)'+\diag([\epsilon_x(z)\oneb_{n_x},\zerob_{l_x}]))\inv g(x,b).
\end{align*}

Let us now address the case in which there there are inequality constraints. The challenge is to take into account the constraint $s_x\ge0$. To address this, let us start by relaxing the inequality constraint from \eqref{eq:minimization_slack} and including it in the cost as the barrier function $-b\oneb'\log(s_x)$ (the $\log(\cdot)$ is element wise). 
\begin{equation}\label{eq:minimization_slack_barier}
\min_{x,s_x:G_x(x)=\zerob,F_x(x)+s_x=\zerob} f(x) -b\oneb'\log(s_x).
\end{equation}
This is a relaxation because $-b\oneb'\log(s_x)$ only accepts $s\ge0$ and goes to $+\infty$ if $s_x\to0$. The optimization \eqref{eq:minimization_slack_barier} only has equality constraints, so similar to what we did in \eqref{eq:min_equal_constr_QP}, let us construct a local second order approximation of \eqref{eq:minimization_slack_barier} around $z$:
\begin{multline}\label{eq:min_QP}
\min_{\substack{\bar d_x,\bar d_s:\\G_x(x)+\grad_xG_x(x)'\bar d_x=\zerob,\\F_x(x)+s_x+\grad_xF_x(x)'\bar d_x+\bar d_s=\zerob}} L(z)-b\oneb'\log(s_x)+\bar d_x'\grad_xL(z)+\bar d_s'(\lambda_x-b\oneb\oslash s_x)
\\+\frac{1}{2}\bar d_x'(\grad_{xx}L(z)+\epsilon_x(z)I)\bar d_x+\frac{1}{2}\bar d_s'\diag(\lambda_x\oslash s_x)\bar d_s
\end{multline}
where $\oslash$ designates the element wise division of two vectors. Equation \eqref{eq:min_QP} is not exactly a second order approximation because instead of using as quadratic term for $\bar d_s$ the matrix $b\diag(s_x)\inv[2]$ (which is the actual matrix given by second order approximation of $-b\oneb'\log(s_x+d_s)$ around $s_x$), we used the matrix $\diag(\lambda_x\oslash s_x)$. This is a relatively well known substitutions for interior-point methods, and is what makes it be a primal-dual interior-point method instead of a barrier interior-point method. The technical justification is that, if we were at a point such that $g(z,b)=\zerob$, the two would be equivalent as $\lambda_x \odot s_x - b \oneb=\zerob$. In practice, it has been observed that this modified linearization tends to perform better because it provides directions $d_s$ that also take into account the current value of $\lambda_x$ in the quadratic form, which helps to get a direction $d_z$ that does no violate the constraints $\lambda_x,s_x>0$ \cite[Chapter 19.3]{nocedal_numerical_2006}. 
 
Because \eqref{eq:min_QP} is a quadratic program with linear equality constraints, just as it was the case for \eqref{eq:min_equal_constr_QP}, we can use the exact same reasoning to choose  $\epsilon_x(z)$. Let us define the matrices
 \begin{equation}
J_{zz}f(z)=
\begin{bmatrix}
\grad_{xx} L(z) & 0 & \grad_{x} G_x(x) & \grad_{x}F_x(x) \\
0 & \diag(\lambda_x\oslash s_x) & 0 & I \\
\grad_{x} G_x(x)'& 0 & 0 & 0\\
\grad_{x}F_x(x)'& I &0 & 0  \\
\end{bmatrix}
\end{equation}
and $E(z):=\diag(\epsilon_x(z)\oneb_{n_x},\zerob_{m_x+l_x+m_x})$. If $\epsilon_x(z)$ is chosen large enough such that $\inertia(J_{zz}+E(z))=(n_x+m_x,l_x+m_x,0)$, then the solution $(\bar d_x,\bar d_s)$ of \eqref{eq:min_QP} and associated Lagrange multipliers $(\bar d_\nu,\bar d_\lambda)$ are unique. With some algebra, one could show that the solution of \eqref{eq:min_QP} is
\begin{align*}
d_z & = -(J_{zz}f(z)+E(z))\inv S\inv g(z,b)
\\&= -(\grad_{z}g(z,b)'+E(z))\inv g(z,b)
\end{align*}
where $S:=\diag(\oneb_{n_x},s_x,\oneb_{l_x+m_x})$. Putting it all together, the modified primal-dual interior-point is governed by the equation
\begin{equation}\label{eq:ip_min} 
z^+=z+\alpha d_z=z-\alpha (\grad_{z}g(z,b)'+E(z))\inv g(z,b),
\end{equation}
where $\alpha\in(0,1]$ is chosen such that $\lambda_x,s_x>\zerob$. Conveniently, because we used $\diag(\lambda_x\oslash s_x)$ for the second order linearization of the barrier, when $\epsilon_x(x)=0$, we recover the basic primal-dual interior-point method from \eqref{eq:basic_ip_min}. We refer to \cite[Chapter 19]{nocedal_numerical_2006} for a complete description of an algorithm using \eqref{eq:ip_min}, including a strategy to decrease the barrier parameter $b$. Alternatively, we describe such strategy in Section \ref{sc:alg-development-numerical-examples} for the minmax optimization case. 

We can now state a result connecting the stability/instability of any equilibrium point of the modified primal-dual interior-point method to such point being or not a local minimum. The theorem says essentially the same thing as Theorem \ref{th:stab-newton-minimization}: On the one hand, even if $\inertia(J_{zz}f(z))=(n_x+m_x,l_x+m_x,0)$, taking $\epsilon_x(z)>0$ will not impair the algorithm's capacity to converge towards a local minimum; this can be useful, for instance, if $\inertia(J_{zz}f(z))$ has an eigenvalue close to $0$. On the other hand, using the modified primal-dual interior-point method essentially guarantees that the algorithm can only converge towards an equilibrium point if such point is a local minimum, thus fixing the issue of primal-dual interior-point methods being attracted to any equilibrium point, regardless of whether such point is a local minimum.

\begin{theorem}[Stability and instability of modified primal-dual interior-point method for constrained minimization] \label{th:stab-ip-minimization}
	Let $\alpha=1$ and $(z,b)$ with $b>0$, be an equilibrium point in the sense that $g(z,b)=\zerob$. Assume the LICQ and strict complementarity hold at $z$, that $J_{zz}f(z)$ is invertible, and that $J_{zz}f(\cdot)$ is differentiable on a neighborhood around $z$. Then for any function $\epsilon_x(\cdot)$ that is constant in a neighborhood around z and satisfies $\inertia(J_{zz}+E(z))=(n_x+m_x,l_x+m_x,0)$ one has that if:
	\begin{enumerate}
		\item[i)] $z$ is a local minimum of \eqref{eq:minimization_slack}, then it is a LAS equilibrium of \eqref{eq:ip_min}.
		\item[ii)] $z$ is not a local minimum of \eqref{eq:minimization_slack}, then it is an unstable equilibrium of \eqref{eq:ip_min}.
	\end{enumerate} 
\end{theorem}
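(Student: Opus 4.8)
The plan is to transplant the proof of Theorem \ref{th:stab-newton-minimization}, replacing $\grad_{xx}f(x)$ by $J_{zz}f(z)$ and the scaled identity $\epsilon_x(x)I$ by $E(z)$, and then to absorb the extra work created by $E(z)$ being only positive semidefinite rather than positive definite. First I would convert the local optimality of $z$ into a spectral statement. Since $g(z,b)=\zerob$ with $b>0$, the point $z$ is a KKT point of the equality-constrained barrier surrogate \eqref{eq:minimization_slack_barier}, and $J_{zz}f(z)$ is precisely the bordered Hessian of that surrogate; applying the second-order sufficient condition (the equality-constrained specialization of Proposition \ref{prop:constrsecondorder_min}) and the matching second-order necessary condition, together with the congruence $J_{zz}f(z)=D\,H_{zz}f(z)\,D$ for $D:=\diag(\oneb_{n_x},\diag(s_x)\inv[1/2],\oneb_{l_x+m_x})\succ\zerob$ (legitimate because $s_x>\zerob$ at an equilibrium with $b>0$) and Sylvester's law of inertia, one gets that $z$ is a local minimum if and only if $\inertia(J_{zz}f(z))=(n_x+m_x,\,l_x+m_x,\,0)$; invertibility of $J_{zz}f(z)$ makes the two cases of the theorem exhaustive. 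Then, exactly as in the proof of Theorem \ref{th:stab-newton-minimization}, I would linearize \eqref{eq:ip_min} with $\alpha=1$ at $z$: the terms in which $g(z,b)$ is differentiated through the matrix inverse vanish because $g(z,b)=\zerob$, and differentiability of $J_{zz}f(\cdot)$ with the constancy of $\epsilon_x(\cdot)$ near $z$ give the Jacobian $I-(\grad_z g(z,b)'+E(z))\inv\grad_z g(z,b)'$; invoking the identity $\grad_z g(z,b)'+E(z)=S(J_{zz}f(z)+E(z))$ noted above \eqref{eq:ip_min} (with $S\,E(z)=E(z)$ and $S$ invertible) this is exactly $I-(J_{zz}f(z)+E(z))\inv J_{zz}f(z)$, the analogue of \eqref{eq:jacob_min}.

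From this point the eigenvalue bookkeeping mirrors the minimization proof. Repeating the manipulation that produced \eqref{eq:eig_jacob_min}, $\rho$ is an eigenvalue of the Jacobian if and only if $\rho\,J_{zz}f(z)+(\rho-1)E(z)$ is singular. Because $E(z)$ has a nontrivial kernel, $\rho=0$ is always an eigenvalue, but it lies strictly inside the unit disk and is harmless (it corresponds to the coordinates on which the iteration is genuinely Newton). For $\rho\neq0$ I would rewrite the condition as ``$J_{zz}f(z)+\mu E(z)$ singular'' with $\mu:=1-1/\rho$ and first argue that $\mu$ is real: pre-multiplying a (possibly complex) null vector $v$ by its conjugate transpose gives $v^*J_{zz}f(z)v+\mu\,v^*E(z)v=0$ with both quadratic forms real and $v^*E(z)v\ge0$ since $E(z)\succeq\zerob$, so either $v^*E(z)v>0$ and $\mu$ is real, or $v^*E(z)v=0$, whence $E(z)v=\zerob$ and then $J_{zz}f(z)v=\zerob$, contradicting invertibility of $J_{zz}f(z)$. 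Hence $\rho=1/(1-\mu)$ is real and $|\rho|<1\iff\mu\notin[0,2]$, so $z$ is LAS when $J_{zz}f(z)+\mu E(z)$ is nonsingular for all $\mu\in[0,2]$ and unstable when it is singular for some $\mu\in(0,2)$.

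To control the interval $[0,2]$ I would use two facts about $\mu\mapsto J_{zz}f(z)+\mu E(z)$. Since $E(z)\succeq\zerob$, by Courant--Fischer its eigenvalues are nondecreasing in $\mu$, so the number $n_+(\mu)$ of positive eigenvalues is nondecreasing; and $n_+(\mu)\le n_x+m_x$ for every $\mu$, because on the subspace $\ker E(z)$ (the $(s_x,\nu_x,\lambda_x)$-coordinates, of dimension $2m_x+l_x$) the matrix restricts to a $\mu$-independent block which, using $\lambda_x,s_x>\zerob$, has exactly $m_x$ positive eigenvalues, and the standard subspace bound on positive eigenvalues then gives $n_+(\mu)\le\big((n_x+2m_x+l_x)-(2m_x+l_x)\big)+m_x=n_x+m_x$. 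In case (i), $n_+(0)=n_x+m_x$; combined with monotonicity, this upper bound, and the inertia (LQAC) hypothesis $n_+(1)=n_x+m_x$, it follows that $n_+(\mu)\equiv n_x+m_x$ on $[0,\infty)$, and since $\det(J_{zz}f(z)+\mu E(z))$ is a polynomial in $\mu$ that is nonzero at $\mu=0$ while a sign change of any eigenvalue branch would alter $n_+$, the matrix $J_{zz}f(z)+\mu E(z)$ is nonsingular for all $\mu\ge0\supseteq[0,2]$; hence every Jacobian eigenvalue is either $0$ or (coming from a $\mu<0$) lies in $(0,1)$, and $z$ is LAS. In case (ii), invertibility of $J_{zz}f(z)$ and $\inertia(J_{zz}f(z))\neq(n_x+m_x,l_x+m_x,0)$, together with $n_+(0)\le n_x+m_x$, force $n_+(0)<n_x+m_x$; since $n_+(1)=n_x+m_x$ and the eigenvalue branches are continuous and nondecreasing, one of them crosses zero at some $\mu_0\in(0,1)$, so $J_{zz}f(z)+\mu_0 E(z)$ is singular and the Jacobian has the eigenvalue $\rho=1/(1-\mu_0)>1$, making $z$ unstable.

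I expect the main obstacle to be exactly the behaviour on $\mu\in(1,2]$ in case (i): in Theorem \ref{th:stab-newton-minimization} the scaled identity $\epsilon_x(x)I$ is positive definite, so $\grad_{xx}f(x)\succ\zerob$ propagates to $\grad_{xx}f(x)+\mu\epsilon_x(x)I\succ\zerob$ for every $\mu\ge0$ with no further argument; here $E(z)$ is only positive semidefinite, so that shortcut is unavailable and the absence of a zero eigenvalue of $J_{zz}f(z)+\mu E(z)$ for $\mu>1$ has to be pieced together from monotonicity of the spectrum in $\mu$, the structural bound $n_+(\mu)\le n_x+m_x$ coming from $\ker E(z)$, and the LQAC inertia hypothesis fixing $n_+(1)$. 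The two reductions in the first step --- the $H_{zz}f(z)$-versus-$J_{zz}f(z)$ congruence and the identity $\grad_z g(z,b)'=S\,J_{zz}f(z)$ --- are routine but must be carried out carefully, since $E(z)$ is not a multiple of the identity.
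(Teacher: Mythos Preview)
Your proposal is correct, and it takes a genuinely different route from the paper's proof.

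The paper does not work with $J_{zz}f(z)+\mu E(z)$ directly. Instead it invokes the bordered--Hessian inertia relation (Nocedal--Wright, Theorem~16.3) to pass to the reduced Hessian
\[
R(\mu)=Z_x(z)'\begin{bmatrix}\grad_{xx}L(z)+\mu\epsilon_x(z)I & 0\\ 0 & \diag(\lambda_x\oslash s_x)\end{bmatrix}Z_x(z),
\]
where $Z_x(z)$ spans the null space of the linearized constraints, and uses the identity
\(
\inertia\!\big(J_{zz}f(z)+\mu E(z)\big)=\inertia\!\big(R(\mu)\big)+(l_x+m_x,\,l_x+m_x,\,0).
\)
Since $R(\mu)=R(0)+\mu\,\epsilon_x(z)\,Z_x'\diag(\oneb_{n_x},\zerob_{m_x})Z_x$ with the additive term positive semidefinite, the paper is back in the exact situation of Theorem~\ref{th:stab-newton-minimization}: in case~(i), $R(0)\succ0$ gives $R(\mu)\succ0$ for all $\mu\ge0$; in case~(ii), $\lambda_{\min}(R(0))<0<\lambda_{\min}(R(1))$ forces a zero crossing on $(0,1)$.

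Your approach avoids the reduced Hessian entirely. You stay with the full matrix and combine three ingredients: Weyl monotonicity of the ordered eigenvalues of $J_{zz}f(z)+\mu E(z)$ in $\mu$ (from $E(z)\succeq0$); the Cauchy--interlacing bound $n_+(\mu)\le n_x+m_x$ obtained by restricting to $\ker E(z)$; and the observation that $\det\!\big(J_{zz}f(z)+\mu E(z)\big)$ is a nonzero polynomial, so no eigenvalue branch can sit at zero on an interval. This is more hands-on but fully self-contained: it does not need the bordered--Hessian inertia theorem. It also yields a point the paper's sketch passes over, namely that every nonzero eigenvalue $\rho$ of the linearized map is \emph{real}. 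In Theorem~\ref{th:stab-newton-minimization} this is automatic because $\grad_{xx}f+\epsilon_xI\succ0$ lets one symmetrize by a square root; here $J_{zz}f(z)+E(z)$ is indefinite, and your Hermitian--form argument ($v^*E(z)v=0\Rightarrow E(z)v=0\Rightarrow J_{zz}f(z)v=0$) is the clean way to recover realness. The paper's reduction implicitly delivers the same conclusion (singularity of $J_{zz}+\mu E$ is tied to that of the real symmetric $R(\mu)$), but it is left unstated.
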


\begin{proof}[Proof sketch]
First, using the same arguments as in the proof of Theorem \ref{th:stab-newton-minimization}, we conclude that the Jacobian of the dynamic system \eqref{eq:ip_min} around a point $z$ for which $g(z,b)=\zerob$ is
\begin{equation}
I- \alpha \Big(J_{zz}f(z)+E(z)\Big)\inv S\inv \grad_zg(z,b)'=I- \alpha \Big(J_{zz}f(z)+E(z)\Big)\inv J_{zz}f(z)
\end{equation}

Second, it is straightforward to check that $H_{zz}f(z)=S^{1/2}J_{zz}f(z)S^{1/2}$ which, using Sylvester's law of inertia \cite[Theorem 1.5]{zhang_schur_2005}, means that $\inertia(H_{zz}f(z))=\inertia(J_{zz}f(z))$. This means that one can check the second order conditions in \eqref{eq:second_order_inertia_constrained_min} by using $J_{zz}f(z)$. 

Let us define the matrix
\begin{equation*}
	R(\mu)=Z_x(z)'\begin{bmatrix}
		\grad_{xx} L(z)+\mu \epsilon_x(z)I & 0 \\
		0 & \diag(\lambda_x\oslash s_x) \\
	\end{bmatrix}Z_x(z)
\end{equation*}
where $Z_{x}(z)\in\eR^{n_x+m_x,n_x-l_x}$ is a  matrix with full column rank such that 
\begin{equation}
	\begin{bmatrix}
		\grad_{x} G_x(x)' & \zerob \\
		\grad_{x}F_x(x)' & I
	\end{bmatrix}Z_x(z)=\zerob.
\end{equation}
Using the same arguments as in the proof of Proposition \ref{prop:constrsecondorder_min}, we conclude that
$$
\inertia(J_{zz}f(z)+E(z))=\inertia(R(\mu))+(l_x+m_x,l_x+m_x),
$$
which implies that $\inertia(J_{zz}f(z)+E(z))=(n_x+m_x,l_x+m_x)$ is equivalent to $R(1)\succ0$ and that the second order sufficient condition is equivalent to $R(0)\succ0$. This means that the rest of the theorem's proof is analogous to the one of Theorem \ref{th:stab-newton-minimization}, but instead of looking at the sign of the smallest eigenvalue of $\grad_{xx}f(x)+\mu \epsilon_x(z)I$, one looks at the sign of the smallest eigenvalue of the matrix $R(\mu)$.

If $z$ is a local minimum, then $\lambda_{min}(R(0))>0$. As $\epsilon_x(z)\ge0$, we conclude that $\lambda_{min}(R(\mu))>0$ for every $\mu\ge0$ and therefore $z$ is a LAS equilibrium
point of \eqref{eq:basic_ip_min}.

Conversely, if $z$ is not a local minimum, $\lambda_{min}(R(0))<0$. By construction, $\epsilon_x(z)$ is such that $\lambda_{min}(R(1))>0$, therefore, by continuity of the eigenvalue, there is a $\mu\in(0,1)$ such that $\lambda_{min}(R(\mu))=0$ and therefore $z$ is an unstable equilibrium point of \eqref{eq:basic_ip_min}.	
\end{proof}


\section{Minmax optimization}\label{sc:minmax}

Consider the minmax optimization problem
\begin{equation} \label{eq:minmaxconstrained}
\min_{x\in\Xcal}\max_{y\in\Ycal(x)}f(x,y)
\end{equation}
where $f:\eR^{n_x}\times\eR^{n_y}\to \eR$ is a twice continuously differentiable objective function,  $\Xcal\subset \eR^{n_x}$ is the feasible set for $x$ and $\Ycal:\Xcal\rightrightarrows\eR^{n_y}$ is a set-valued map that defines an $x$ dependent feasible set for $y$; we do not make any convexity or concavity assumption on $f(\cdot)$, $\Xcal$ and $\Ycal(\cdot)$. We chose $\Ycal(\cdot)$ to be dependent on $x$ because this describes the most general application. Moreover, having the constraints of the inner maximization to depend on the value of outer maximization is often necessary in problems such as robust Model Predictive Control or in bi-level optimization. Furthermore, notice that we do not make any assumption on whether the min and the max commute (and this would not be well defined as $\Ycal(\cdot)$ depends on $x$). A solution $(x^*,y^*)$ to \eqref{eq:minmaxconstrained} is called a global minmax and satisfies
\begin{align*}
f(x^*,y)\le f(x^*,y^*)\le \max_{ \tilde y \in \Ycal(x)}f(x, \tilde y) \qquad \forall (x,y)\in \Xcal\times\Ycal(x^*).
\end{align*}
We will look at two representations of $\Xcal \tand \Ycal(\cdot)$: first when $\Xcal=\eR^{n_x}$ and $\Ycal=\eR^{n_y}$, which is known in the literature as the unconstrained case; second a more general representation in which $\Xcal$ and $\Ycal(\cdot)$ are defined using equality and inequality constraints.

A point $(x^*,y^*)$ is said to be a local minmax of \eqref{eq:minmaxconstrained} if there exist a constant $\delta_0>0$ and a positive function $h(\cdot)$ satisfying $h(\delta)\to 0$ as $\delta\to0$, such that for every $\delta \in (0,\delta_0]$ and for every $(x,y)\in$  $\{x\in\Xcal:\norm{x-x^*}\le\delta \}$ $\times \{y\in\Ycal(x^*):\norm{y-y^*}\le h(\delta)\}$ we have
\begin{align*}
f(x^*,y)\le f(x^*,y^*)\le \max_{\tilde y \in \Ycal(x): \norm{ \tilde y-y^*}\le h(\delta)} f(x, \tilde y)
\end{align*}
\cite{jin_what_2019,dai_optimality_2020}. Inspired by the properties of the modified Newton and primal-dual interior-point methods for minimization in Section \ref{sc:minimization}, we want to develop a Newton-type iterative algorithm of the form
\begin{equation}\label{eq:iterative}
\begin{bmatrix}
x^+\\
y^+
\end{bmatrix}=
\begin{bmatrix}
x\\
y
\end{bmatrix}+
\begin{bmatrix}
d_{x}\\
d_{y}
\end{bmatrix}.
\end{equation}
where $d_x$ and $d_y$ satisfy the following properties:
\begin{enumerate}
	\item[P1:] At each time step, $(d_x,d_y)$ is obtained from the solution of a quadratic program that locally approximates \eqref{eq:minmaxconstrained} and therefore $(x^+,y^+)$ can be seen as an improvement over $(x,y)$. This acts as a surrogate for guiding the modified Newton's method towards a solution at each step.
	\item[P2:] The iterations of \eqref{eq:iterative} can converge towards an equilibrium point only if such point is a local minmax. Similar to what was the case in minimization (see Example \ref{example1}), a pure Newton method will be attracted to any equilibrium point. This makes sure that the iterations will not be attracted to equilibrium points that are not local minmax.
	\item[P3:] The iterations of \eqref{eq:iterative} can converge to any local minmax. This property means that any modification to Newton's method needs to keep local minmax as attractor.
\end{enumerate}

\subsection{Unconstrained minmax} \label{sc:minmax-unconstrained}

We start by considering the case where $\Xcal=\eR^{n_x}$ and $\Ycal(\cdot)=\eR^{n_y}$ such that \eqref{eq:minmaxconstrained} simplifies to
\begin{equation} \label{eq:minmax}
\min_{x\in \eR^{n_x}}\max_{y\in \eR^{n_y}} f(x,y).
\end{equation}
For this case, \cite{jin_what_2019} establishes second order sufficient conditions to determine if a point $(x,y)$ is a local minmax which can be stated in terms of the inertia of the matrix
\begin{equation*}
\grad_{zz}f(x,y) :=\begin{bmatrix}
\grad_{xx}f(x,y) & \grad_{xy}f(x,y) \\
\grad_{yx}f(x,y) & \grad_{yy}f(x,y)
\end{bmatrix}.
\end{equation*}
We recall that the inertia $\inertia(A)$ of a symmetric matrix $A$ is a 3-tuple with the number of positive, negative and zero eigenvalues of $A$.

\begin{proposition}[Second order sufficient condition for unconstrained minmax]\label{prop:second-order-unconstrained-minmax}
	Let $(x,y)$ be an equilibrium point in the sense that $\grad_x f(x,y)=0$ and $\grad_y f(x,y)=0$. If 
	\begin{equation} \label{eq:secondorderuncont}
	\inertia(\grad_{yy} f(x,y))=(0,n_y,0) \tand 
	\inertia(\grad_{zz}f(x,y))=(n_x,n_y,0)
	\end{equation}
	then $(x,y)$ is a local minmax. 
\end{proposition}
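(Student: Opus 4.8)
The plan is to reduce the minmax statement to an ordinary unconstrained minimization statement via the implicit function theorem, in the spirit of a ``follow the ridge'' reduction. Since $\inertia(\grad_{yy}f(x,y))=(0,n_y,0)$ the matrix $\grad_{yy}f(x,y)$ is negative definite, hence invertible, and together with $\grad_y f(x,y)=\zerob$ this makes $y$ a strict local maximizer of $f(x,\cdot)$. Applying the implicit function theorem to the equation $\grad_y f(\cdot,\cdot)=\zerob$ at $(x,y)$ yields a continuously differentiable map $\tilde x\mapsto r(\tilde x)$ on a neighborhood of $x$, with $r(x)=y$, $\grad_y f(\tilde x,r(\tilde x))=\zerob$, and $\norm{r(\tilde x)-y}\to0$ as $\tilde x\to x$; shrinking the neighborhood, continuity of $\grad_{yy}f$ gives $\grad_{yy}f(\tilde x,r(\tilde x))\prec\zerob$ there, so $r(\tilde x)$ is a strict local maximizer of $f(\tilde x,\cdot)$ for every such $\tilde x$. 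Define the reduced cost $g(\tilde x):=f(\tilde x,r(\tilde x))$.

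The second step is to compute the first two derivatives of $g$ at $x$ and read off their definiteness from the inertia hypotheses. By the chain rule together with the envelope identity $\grad_y f(\tilde x,r(\tilde x))=\zerob$, one has $\grad g(\tilde x)=\grad_x f(\tilde x,r(\tilde x))$, so $\grad g(x)=\grad_x f(x,y)=\zerob$. Differentiating once more and using the implicit-function formula $Dr(x)=-(\grad_{yy}f(x,y))\inv\grad_{yx}f(x,y)$, the Hessian of $g$ at $x$ equals the Schur complement
\[
\grad^2 g(x)=\grad_{xx}f(x,y)-\grad_{xy}f(x,y)\,(\grad_{yy}f(x,y))\inv\grad_{yx}f(x,y).
\]
By the Haynsworth inertia additivity formula for Schur complements \cite{zhang_schur_2005}, $\inertia(\grad_{zz}f(x,y))=\inertia(\grad_{yy}f(x,y))+\inertia(\grad^2 g(x))$, so the hypotheses force $(n_x,n_y,0)=(0,n_y,0)+\inertia(\grad^2 g(x))$, i.e.\ $\grad^2 g(x)\succ\zerob$. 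Since $\grad g(x)=\zerob$ and $\grad^2 g(x)\succ\zerob$, the second-order sufficient condition for minimization \cite[Chapter 2]{nocedal_numerical_2006} gives $r_2>0$ such that $g(\tilde x)\ge g(x)$ whenever $\norm{\tilde x-x}\le r_2$.

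The last step is to package these facts into the definition of local minmax. Let $r_1>0$ be such that $f(x,\tilde y)\le f(x,y)$ for $\norm{\tilde y-y}\le r_1$, which exists because $y$ is a strict local maximizer of $f(x,\cdot)$. Put $h(\delta):=\delta+\max_{\norm{\tilde x-x}\le\delta}\norm{r(\tilde x)-y}$, which is nondecreasing, strictly positive for $\delta>0$, and tends to $0$ as $\delta\to0$, and choose $\delta_0>0$ small enough that $r$ is defined on $\norm{\tilde x-x}\le\delta_0$, that $\delta_0\le r_2$, and that $h(\delta_0)\le r_1$. Then for every $\delta\in(0,\delta_0]$: if $\norm{\tilde y-y}\le h(\delta)\le r_1$ then $f(x,\tilde y)\le f(x,y)$, giving the left inequality; and if $\norm{\tilde x-x}\le\delta$ then $r(\tilde x)$ lies in $\{\hat y:\norm{\hat y-y}\le h(\delta)\}$ by the definition of $h$, so
\[
\max_{\norm{\hat y-y}\le h(\delta)}f(\tilde x,\hat y)\ \ge\ f(\tilde x,r(\tilde x))=g(\tilde x)\ \ge\ g(x)=f(x,y),
\]
using $\norm{\tilde x-x}\le\delta\le r_2$; this is the right inequality. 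Hence $(x,y)$ is a local minmax.

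I expect the main obstacle to be the second step: correctly identifying $\grad^2 g(x)$ with the Schur complement of $\grad_{yy}f(x,y)$ in $\grad_{zz}f(x,y)$ — which requires the envelope identity and the implicit-function expression for $Dr(x)$ — and then invoking inertia additivity to convert the two inertia hypotheses into the single conclusion $\grad^2 g(x)\succ\zerob$. Once that is in hand, the construction of $h(\cdot)$ and $\delta_0$ is routine, but must be carried out with care since the definition of local minmax couples the two scales $\delta$ (in $x$) and $h(\delta)$ (in $y$).
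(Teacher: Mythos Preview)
Your proof is correct. The paper does not give a self-contained proof of this proposition: it simply notes that the Schur-complement form of the second-order conditions,
\[
\grad_{yy}f(x,y)\prec\zerob \quad\text{and}\quad \grad_{xx}f(x,y)-\grad_{xy}f(x,y)\,\grad_{yy}f(x,y)^{-1}\grad_{yx}f(x,y)\succ\zerob,
\]
was established in \cite{jin_what_2019}, and then observes that these are equivalent to the inertia conditions \eqref{eq:secondorderuncont} by Haynsworth's inertia additivity formula. Your argument uses the same Haynsworth step to pass from the inertia hypotheses to positivity of the Schur complement, but instead of citing \cite{jin_what_2019} you supply the underlying ``follow the ridge'' argument yourself: the implicit function theorem gives the ridge map $r(\cdot)$, the reduced cost $g(\tilde x)=f(\tilde x,r(\tilde x))$ has Hessian equal to that Schur complement, and the explicit construction of $h(\delta)$ then verifies the local-minmax definition directly. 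So the route is the same in spirit; what you gain is a self-contained proof that does not require the reader to consult \cite{jin_what_2019}, at the cost of a little extra bookkeeping in choosing $h(\delta)$ and $\delta_0$.
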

The second order conditions in \cite{jin_what_2019} are:
\begin{gather*}
\inertia(\grad_{yy} f(x,y))=(0,n_y,0)  \tand 
\\ \inertia(\grad_{xx} f(x,y) - \grad_{xy} f(x,y)\grad_{yy} f(x,y)\inv\grad_{yx} f(x,y))=(n_x,0,0),
\end{gather*}
which turn out to be equivalent to the inertia conditions in Proposition \ref{prop:second-order-unconstrained-minmax} in view of Haynsworth inertia additivity formula \cite[Theorem 1.6]{zhang_schur_2005}. Notice that the second order sufficient conditions are \emph{not} symmetric. A point might be a local minmax even if $\grad_{xx} f(x,y) \nsucc 0$ as long as $- \grad_{xy} f(x,y)\grad_{yy} f(x,y)\inv\grad_{yx} f(x,y)$ (which is positive) is large enough. So the second order conditions are what allow one to distinguish between an equilibrium point being a local minmax and a minmin, maxmax or maxmin. One can interpret the second order sufficient conditions as saying that $y\mapsto f(x,y)$ is strongly concave in a neighborhood around $(x,y)$ and $x\mapsto \max_{\tilde y:\norm{y-\tilde y}<\delta} f(x,\tilde y)$ is strongly convex in a neighborhood around $(x,y)$ for some $\delta>0$. Notice that these are only local properties around local minmax, as $f(\cdot)$ may be nonconvex-nonconcave away from local minmax points.

In order to obtain property P1, we propose to obtain the Newton direction $(d_x,d_y)$ for \eqref{eq:iterative} by solving the following local quadratic approximation to \eqref{eq:minmax}
\begin{multline}\label{eq:lqa}
\min_{\bar d_x}\max_{\bar d_y}f(x,y)+ \grad_{x}f(x,y)'\bar d_x+\grad_{y}f(x,y)'\bar d_y+\bar d_x'\grad_{xy}f(x,y)\bar d_y\\+\frac{1}{2}\bar d_x'\Big(\grad_{xx}f(x,y)+\epsilon_x(x,y)I\Big)\bar d_x  +\frac{1}{2}\bar d_y'\Big(\grad_{yy}f(x,y)-\epsilon_y(x,y)I\Big)\bar d_y
\end{multline}
with $\epsilon_x(\cdot)$ and $\epsilon_y(\cdot)$ chosen so that the minmax problem in \eqref{eq:lqa} has a unique solution, which means that the inner (quadratic) maximization must be strictly concave and that the outer (quadratic) minimization of the maximized function must be strictly convex, which turns out to be precisely the second order sufficient conditions in Proposition \ref{prop:second-order-unconstrained-minmax}, applied to the approximation in \eqref{eq:lqa}, which can be explicitly written as follows:
\begin{equation}\label{eq:lqac}
\tag{LQAC}
\begin{split}
\inertia\Big(\grad_{yy}f(x,y) - \epsilon_y(x,y)I\Big)&=(0,n_y,0) \tand \\
\inertia\Big(\grad_{zz}f(x,y)+E(x,y)\Big)&=(n_x,n_y,0)
\end{split}
\end{equation}
where $E(x,y) = \diag(\epsilon_x(x,y) \oneb_{n_x}, -\epsilon_y(x,y) \oneb_{n_y})$. We call these condition the Local Quadratic Approximation Condition (LQAC). It is straightforward to show that the Newton iterations \eqref{eq:iterative} with $(d_x,d_y)$ obtained from the solution to \eqref{eq:lqa} is given by
\begin{equation}\label{eq:newton}
\begin{bmatrix}
x^+\\
y^+
\end{bmatrix}=\begin{bmatrix}
x\\
y
\end{bmatrix}+
\begin{bmatrix}
d_x\\
d_y
\end{bmatrix}=
\begin{bmatrix}
x\\
y
\end{bmatrix}-
\Big(\grad_{zz}f(x,y)+E(x,y)\Big)\inv
\begin{bmatrix}
\grad_{x}f(x,y)\\
\grad_{y}f(x,y)
\end{bmatrix}.
\end{equation}

To obtain properties P2 and P3, we need all locally asymptotically stable equilibrium points of \eqref{eq:lqa} to be local minmax of \eqref{eq:minmax} and that all other equilibrium points of \eqref{eq:lqa} to be unstable. For the unconstrained minimization in Section \ref{sc:minimization-unconstrained}, to obtain the equivalent of properties P2 and P3 it was sufficient to simply select $\epsilon_x(\cdot)$ such that the local quadratic approximation \eqref{eq:minlqac} has a well-defined minimum (Theorem \ref{th:stab-newton-minimization}). However, for minmax optimization the \eqref{eq:lqac} does not suffice to guarantee that P2 and P3 hold. Our first counter example bellow show how the \eqref{eq:lqac} are not enough to ensure that P2 holds; our second counter example show how they are not enough to guarantee that P3 holds.

\begin{example}
	Consider $f(x,y)= 1.5x^2 -4xy +y^2$ for which the unique equilibrium point $x=y=0$ is not a local minmax point. Take $\epsilon_y(0,0)=4$ and $\epsilon_x(0,0)=0$ which satisfy \eqref{eq:lqac}. The Jacobian of the dynamics is
	$$
	I-\bigg(
	\begin{bmatrix}
	3& -4 \\
	-4 & 2
	\end{bmatrix} +
	\begin{bmatrix}
	0& 0 \\
	0 & -4
	\end{bmatrix} 
	\bigg)\inv		
	\begin{bmatrix}
	3& -4 \\
	-4 & 2
	\end{bmatrix}
	 \approx 	\begin{bmatrix}
	0& 0.72 \\
	0 & 0.54
	\end{bmatrix} 
	$$
	which has eigenvalues approximately equal to $(0,0.54)$. Therefore $(0,0)$ is a LAS equilibrium point of \eqref{eq:newton} even though it is not a local minmax point. 
\end{example}

\begin{example} Consider $ f(x, y) := -0.25 x^2 + xy - 0.5 y^2 $, for which the unique equilibrium point $x=y=0$ is a local minmax point. Take $\epsilon_y(0,0)=3$ and $\epsilon_x(0,0)=0.2$ which satisfy \eqref{eq:lqac}. The Jacobian of the dynamics is
\begin{equation*}
 I -\bigg(
 \begin{bmatrix}
 -0.5 & 1 \\
 1 & -1
 \end{bmatrix}  +
 \begin{bmatrix}
 0.3 & 0 \\
 0 & -3
 \end{bmatrix} 
 \bigg)\inv
 \begin{bmatrix}
-0.5 & 1 \\
1 & -1
\end{bmatrix} = \begin{bmatrix}
6 & -15 \\
1.5 & -3
\end{bmatrix},
\end{equation*}
for which the eigenvalues are $1.5\pm1.5i$. Therefore $(0,0)$ is an unstable equilibrium point of \eqref{eq:newton} even though it is a local minmax point. 
\end{example}
The main contribution of this section is a set of sufficient conditions that, in addition to \eqref{eq:lqac}, guarantee P2 and P3 hold.

\begin{theorem}[Stability and instability of modified Newton's method for unconstrained minmax] \label{th:stab-newton-minmax}
	Let $(x,y)$ be an equilibrium point in the sense that $\grad_{x}f(x,y)=\zerob$ and $\grad_{y}f(x,y)=\zerob$. Assume that $\grad_{zz}f(x,y)$ and $\grad_{yy}f(x,y)$ are invertible and that $\grad_{zz}f(\cdot)$ is differentiable on a neighborhood around $(x,y)$. Then there exist functions $\epsilon_x(\cdot)$ and $\epsilon_y(\cdot)$ that are constant in a neighborhood around $(x,y)$, satisfy the \eqref{eq:lqac} at $(x,y)$ and guarantee that if:
	\begin{enumerate}
		\item[i)] $(x,y)$ is a local minmax of \eqref{eq:minmax}, then it is a LAS equilibrium of \eqref{eq:newton}.
		\item[ii)] $(x,y)$ is not a local minmax of \eqref{eq:minmax}, then it is an unstable equilibrium of \eqref{eq:newton}.
	\end{enumerate}
\end{theorem}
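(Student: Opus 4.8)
The plan is to follow the template of the proof of Theorem~\ref{th:stab-newton-minimization}: reduce the stability question to the location of the roots of a symmetric matrix pencil, and then choose $\epsilon_x(\cdot),\epsilon_y(\cdot)$ so that this pencil ``sees'' the second-order conditions of Proposition~\ref{prop:second-order-unconstrained-minmax}. First I would compute the Jacobian of \eqref{eq:newton} at $(x,y)$: since $\epsilon_x(\cdot),\epsilon_y(\cdot)$ are constant near $(x,y)$, $\grad_{zz}f(\cdot)$ is differentiable there, and $\grad_x f=\grad_y f=\zerob$ at the equilibrium, the term produced by differentiating $(\grad_{zz}f+E)\inv$ is multiplied by the zero vector $[\grad_x f;\grad_y f]$ and drops out, exactly as in Theorem~\ref{th:stab-newton-minimization}; hence the Jacobian is $J:=I-(\grad_{zz}f+E)\inv\grad_{zz}f=(\grad_{zz}f+E)\inv E$, well defined since \eqref{eq:lqac} makes $\grad_{zz}f+E$ invertible. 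Writing $K:=\grad_{zz}f(x,y)$, the identity $\rho I-J=(\grad_{zz}f+E)\inv\big(\rho K+(\rho-1)E\big)$ shows that a nonzero $\rho$ is an eigenvalue of $J$ (with multiplicity) iff $\det(K+\mu E)=0$ with $\mu:=1-1/\rho$; since $1-\mu=1/\rho$ one has $|\rho|\ge1\iff|\mu-1|\le1$, and $\rho=0$ is never an eigenvalue because $E$ is invertible. So $(x,y)$ is a LAS equilibrium of \eqref{eq:newton} iff $K+\mu E$ is nonsingular on the closed disc $\bar D:=\{\mu:|\mu-1|\le1\}$ (note $\bar D\cap\eR=[0,2]$), and it is unstable as soon as $K+\mu E$ is singular for some $\mu$ in the open disc $D:=\{\mu:|\mu-1|<1\}$.

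For the choice I would fix a small $\delta>0$ and take, constant near $(x,y)$, $\epsilon_y(\cdot)\equiv\max\{0,\lambda_{max}(\grad_{yy}f(x,y))\}+\delta$ and $\epsilon_x(\cdot)\equiv N$ with $N$ large; note that this rule does not depend on whether $(x,y)$ is a local minmax. Then $\grad_{yy}f-\epsilon_y I\preceq-\delta I\prec\zerob$ always, and for $N$ large a Schur-complement (Haynsworth) argument gives \eqref{eq:lqac}. Set $S_0:=\grad_{xx}f-\grad_{xy}f\,\grad_{yy}f\inv\,\grad_{yx}f$, invertible by Haynsworth since $K,\grad_{yy}f$ are. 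The analytic crux is the behaviour of the $n_x+n_y$ roots of the polynomial $\mu\mapsto\det(K+\mu E)$ as $N\to\infty$: I would show they split into $n_x$ ``reduced'' roots of the form $\mu=t/N$ with $t\to-\lambda_j(S_0)$ and $n_y$ ``inner'' roots $\mu\to\lambda_i(\grad_{yy}f)/\epsilon_y$ (via rescaling and continuity of polynomial roots). I expect this perturbation estimate to be the main obstacle: one must localize the roots precisely enough to decide, uniformly for $N$ large, on which side of $\partial\bar D$ they lie, which is delicate for the reduced roots since they converge to the boundary point $\mu=0$ and the verdict hinges, through a lower-order term, only on $\mathrm{sign}\,\lambda_j(S_0)$.

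With that, the three regimes should follow. (i) If $(x,y)$ is a local minmax then, by Proposition~\ref{prop:second-order-unconstrained-minmax} and its converse (valid here because $\grad_{zz}f,\grad_{yy}f$ are invertible), $\grad_{yy}f\prec\zerob$ and $S_0\succ\zerob$, so the inner roots tend to negative reals bounded away from $\bar D$, and each reduced root $\mu=t/N$ has $\mathrm{Re}\,t<0$ for $N$ large, whence $|\mu-1|^2=1-2\,\mathrm{Re}\,\mu+|\mu|^2>1$; thus $K+\mu E$ stays nonsingular on $\bar D$ and $(x,y)$ is LAS. (ii) If $(x,y)$ is not a local minmax then, using invertibility, either $\inertia(K)\ne(n_x,n_y,0)$, or $\inertia(K)=(n_x,n_y,0)$ and $\grad_{yy}f$ has a positive eigenvalue $\lambda$. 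In the first subcase $\mu\mapsto K+\mu E$ is a path of real symmetric matrices, nonsingular at $\mu=0$ (hypothesis) and at $\mu=1$ (by \eqref{eq:lqac}) but with different inertias at these endpoints, hence singular at some $\mu^*\in(0,1)\subset D$ --- an argument that uses only \eqref{eq:lqac} and so is compatible with ``$N$ large.'' In the second subcase the inner root attached to $\lambda$ converges to $\lambda/\epsilon_y=\lambda/(\lambda+\delta)\in(0,1)\subset D$, an open condition, so that root lies in $D$ once $N$ is large. Either way $K+\mu E$ is singular inside $D$ and $(x,y)$ is unstable. It then remains to pick $N$ large enough for \eqref{eq:lqac} and for the finitely many ``$N$ large'' requirements above, which completes the argument.
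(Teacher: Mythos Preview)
Your outline is correct and takes a genuinely different route from the paper's proof. You share the opening step: the Jacobian at the equilibrium is $I-(K+E)^{-1}K$ with $K=\grad_{zz}f(x,y)$, and a nonzero $\rho$ is an eigenvalue iff $K+\mu E$ is singular at $\mu=1-1/\rho$. After that the arguments diverge.

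For the stability part~(i), the paper does not use any large-$N$ asymptotics. It fixes $\epsilon_y\ge0$ and gives the explicit bound $\epsilon_x\ge\lambda_{\min}\!\big(\epsilon_y\,\grad_{xy}f\,\grad_{yy}f^{-2}\,\grad_{yx}f\big)$, under which the Schur complement $R(\mu)=\grad_{xx}f+\mu\epsilon_x I-\grad_{xy}f(\grad_{yy}f-\mu\epsilon_y I)^{-1}\grad_{yx}f$ is shown, by differentiating $v'R(\mu)v$ in $\mu$, to satisfy $R(\mu)\succeq R(0)=S_0\succ0$ for every real $\mu\ge0$; together with $\grad_{yy}f-\mu\epsilon_y I\prec0$, Haynsworth then gives $K+\mu E$ nonsingular on $[0,2]$. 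Your singular-perturbation picture (reduced roots $\mu\sim -\lambda_j(S_0)/N$, inner roots $\mu\to\lambda_i(\grad_{yy}f)/\epsilon_y$) reaches the same conclusion by a different mechanism and, importantly, treats the full complex disc $\bar D$ rather than just its real trace $[0,2]$; the paper's monotonicity argument is stated only for real $\mu$ and tacitly assumes the spectrum of $(K+E)^{-1}K$ is real, which is not automatic once $E$ is indefinite. The price you pay is exactly the obstacle you flag: the reduced roots tend to the boundary point $\mu=0$, so you must control the next-order term (the sign of $\lambda_j(S_0)$) uniformly in $N$, e.g.\ via Hurwitz/continuity after the rescaling $\mu=t/N$.

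For instability~(ii), your first subcase (wrong inertia of $K$) is identical to the paper's. In the second subcase the paper fixes some $\bar\mu\in(0,1)$ below the first $\mu$ at which $\grad_{yy}f-\mu\epsilon_y I$ becomes singular, and shows via Haynsworth that for $\epsilon_x$ large the inertia of $K+\bar\mu E$ differs from $(n_x,n_y,0)$, forcing a singular point in $(0,\bar\mu]$. Your argument via convergence of an inner root to $\lambda/\epsilon_y\in(0,1)$ is cleaner; note that your equality $\lambda/\epsilon_y=\lambda/(\lambda+\delta)$ holds only for $\lambda=\lambda_{\max}(\grad_{yy}f)$, though any positive eigenvalue suffices for the conclusion. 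What your uniform recipe buys is a single $(\epsilon_x,\epsilon_y)$ covering both (i) and (ii); what the paper's buys is the explicit inequality \eqref{eq:minmax-eps-cond} that feeds directly into Algorithm~\ref{alg:ip-minmax}.
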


The theorem's implications are similar to those of Theorem \ref{th:stab-newton-minimization}. On the one hand, if $(x,y)$ is a local minmax, then it is possible to construct functions $\epsilon_x(\cdot)$ and $\epsilon_y(\cdot)$ that guarantee that the modified Newton method can converge towards a local minmax. A natural choice for such function near a local minmax is to take $\epsilon_y(\cdot)=\epsilon_x(\cdot)=0$, which not only provides the stability result, but can also achieve superlinear convergence if $f(\cdot)$ is Lipschitz. On the other hand, if $(x,y)$ is an equilibrium point but not a local minmax, it is possible to construct functions $\epsilon_x(\cdot)$ and $\epsilon_y(\cdot)$ such that the algorithm's iterations \emph{cannot} converge towards it. This means that the modified Newton's method for minmax can only converge towards an equilibrium point if such point is a local minmax.

While the statement of Theorem \ref{th:stab-newton-minmax} is about existence, the proof is actually constructive. The functions $\epsilon_x(\cdot)$ and $\epsilon_y(\cdot)$ are not unique, and have to satisfy the following conditions:
\begin{enumerate}
	\item[i)] For the stability result, if $\epsilon_y(x,y)=0$, then the stability property is guaranteed by any $\epsilon_x(x,y)\ge0$. If $\epsilon_y(x,y)>0$, then $\epsilon_x(x,y)$ needs to be taken large enough to satisfy the condition in equation \eqref{eq:minmax-eps-cond} of the proof.
	\item[ii)] For the instability result:
	\begin{itemize}
		\item unless $\inertia(\grad_{yy}f(x,y))\ne(0,n_y,0)$ and 	 $\inertia(\grad_{zz}f(x,y))=(n_x,n_y,0)$, then it is sufficient for $\epsilon_x(x,y)$ and $\epsilon_y(x,y)$ to satisfy the \eqref{eq:lqac} to guarantee instability. 
		\item if $\inertia(\grad_{yy}f(x,y))\ne(0,n_y,0)$ and 	 $\inertia(\grad_{zz}f(x,y))=(n_x,n_y,0)$ then for a given $\epsilon_y(x,y)$, $\epsilon_x(x,y)$ needs to be large enough such that for some $\mu\in(0,1)$, $\inertia(\grad_{zz}f(x,y)+\mu E(x,y))\ne(n_x,n_y,0)$. 
	\end{itemize}
\end{enumerate}
We use these results in Section \ref{sc:alg-development-numerical-examples} to present an efficient way to numerically construct these functions.

\begin{proof}[Proof of Theorem \ref{th:stab-newton-minmax}]
The fact that the \eqref{eq:lqac} can always be satisfied is straightforward: as $\grad_{zz}f(x,y)$ is differentiable, its eigenvalues are bounded and can be made to have the desired inertia by taking sufficiently large (but finite) values of $\epsilon_x(x,y)$ and $\epsilon_y(x,y)$. Moreover, from our assumption that $\grad_{zz}f(x,y)$ and $\grad_{yy}f(x,y)$ are invertible, $(x,y)$ is a local minmax point if and only if $(x,y)$ satisfy the second order sufficient in \eqref{eq:secondorderuncont}; this is implied by the second order necessary conditions for local minmax in \cite{jin_what_2019}.
	
Using the same reasoning as in Theorem \ref{th:stab-newton-minimization}, as the \eqref{eq:lqac} hold then $(\grad_{zz}f(x,y)+E(x,y))$ is nonsingular and the Jacobian of the dynamical system \eqref{eq:newton} at $(x,y)$ is
\begin{equation}\label{eq:jacobnewton}
I-(\grad_{zz}f(x,y)+E(x,y))\inv \grad_{zz}f(x,y).
\end{equation}
Therefore, we can also use the same reasoning as in the proof of Theorem \ref{th:stab-newton-minimization} to conclude that $(x,y)$ is a LAS equilibrium point of \eqref{eq:newton} if $\grad_{zz}f(x,y) + \mu E(x,y)$ is nonsingular $\forall \mu \in [0, 2]$. Conversely, $(x,y)$ is an unstable equilibrium point of \eqref{eq:newton} if $\grad_{zz}f(x,y) + \mu E(x,y)$ is singular  for some $\mu \in (0, 2)$.

For the rest of the proof, it will be useful to have defined the function
\begin{equation}\label{eq:R_function_for_unconstrained}
R(\mu)=\grad_{xx}f(x,y) - \grad_{xy}f(x,y) (\grad_{yy}f(x,y) - \mu\epsilon_y(x,y) I)^{-1} \grad_{yx}f(x,y) + \mu\epsilon_xI
\end{equation}
and to drop the inputs $(x,y)$ from the expressions in order to shorten them.

\bigskip

Let us start by proving the statement for the case when $(x,y)$ is a local minmax, in which case the \eqref{eq:lqac} hold with $\epsilon_y=\epsilon_x=0$. We will prove that if
\begin{equation}\label{eq:minmax-eps-cond}
\epsilon_x\ge \lambda_{min}(\epsilon_y \grad_{xy}f \grad_{yy}f^{-2} \grad_{yx}f).
\end{equation}
then $(x,y)$ is a LAS equilibrium point of \eqref{eq:newton}.
To prove it, we will show \eqref{eq:minmax-eps-cond} ensures that $\grad_{zz}f + \mu E$ is nonsingular $\forall\ \mu\ge0$. First, as $ \grad_{yy}f \prec 0 $, $ \mu \geq 0 $, and $ \epsilon_y \geq 0 $, we have $ \grad_{yy}f - \mu \epsilon_y I \prec 0 $ and is thus nonsingular. Second, let us show that the condition \eqref{eq:minmax-eps-cond}
implies that for any vector $v$
\begin{align} \label{eq:eigenvalue_mu_smallest}
&\min_{\mu\in[0,2]}v'R(\mu)v=v'R(0)v.
\end{align}
Taking the derivative of $v'R(\mu)v$ with respect to $\mu$ we obtain
\begin{equation*} 
v'\Big(\epsilon_x I - \epsilon_y \grad_{xy}f (\grad_{yy}f - \mu \epsilon_y I)^{-2} \grad_{yx}f \Big)v\succ v'\Big(\epsilon_x I - \epsilon_y \grad_{xy}f \grad_{yy}f\inv[2] \grad_{yx}f \Big)v
\end{equation*}
in which we use the the fact that $\grad_{yy}f^{-2}\succeq(\grad_{yy}f - \mu \epsilon_y I)^{-2}$ for all $\mu\ge0$ as $ \grad_{yy}f \prec 0 $, and $ \epsilon_y \geq 0 $. Therefore, if \eqref{eq:minmax-eps-cond} holds, the derivative of $v'R(\mu)v$ with respect to $\mu$ is non-negative,  thus the cost does not decrease with $\mu$, which implies that the minimum is obtained for $\mu=0$, which proves \eqref{eq:eigenvalue_mu_smallest}. Therefore if $ \epsilon_x$ and $\epsilon_y$ are chosen to satisfy \eqref{eq:minmax-eps-cond}, then $\forall \mu \in [0,2]$ it holds that $R(\mu)\succeq R(0)\succ 0I$, where the second inequality comes from the second order sufficient conditions for unconstrained minmax \eqref{eq:secondorderuncont}. As neither $ \grad_{yy}f - \mu \epsilon_y I \prec 0 $ nor $R(\mu)$ are singular for $\mu\in[0,2]$, Haynsworth inertia additivity formula \cite[Theorem 1.6]{zhang_schur_2005}  implies that $\grad_{zz}f + \mu E$ is nonsingular $\forall \mu\in[0,2]$, and therefore $(x,y)$ is a LAS equilibrium point of \eqref{eq:newton}.

\bigskip

Now the second part, let us prove the statement for the case in which $(x,y)$ is not a local minmax. We will show that for every $\epsilon_y$ such that $\in(\grad_{yy}f-\epsilon_yI)=(0,n_y,0)$ for any large enough $\epsilon_x$, the \eqref{eq:lqac} are satisfied and
\begin{equation}
\grad_{zz}f+\mu \diag(\epsilon_x \oneb_{n_x}, -\epsilon_y \oneb_{n_y})=\grad_{zz}f+\mu E
\end{equation}
is singular for some $\mu\in(0,1)$, which in turn guarantees that $(x,y)$ is an unstable equilibrium point of \eqref{eq:newton} (see discussion in the beginning of the proof).

If $\inertia(\grad_{zz}f)\neq(n_x,n_y,0)$, then any large enough value of $\epsilon_x$ such that \eqref{eq:lqac} holds is enough to guarantee that $\grad_{zz}f+\mu E$ is singular for some $\mu\in(0,1)$. The proof is straightforward: If $\inertia(\grad_{zz}f)\neq(n_x,n_y,0)$ and $\inertia(\grad_{zz}f+E)=(n_x,n_y,0)$ (from the \eqref{eq:lqac}), then, by continuity of the eigenvalue $\exists \mu \in (0,1)$ such that $\grad_{zz}f+\mu E$ is singular.

If $\inertia(\grad_{zz}f)=(n_x,n_y,0)$ but $\inertia(\grad_{yy}f)\neq(0,n_y,0)$, then the \eqref{eq:lqac} is not enough to guarantee that $(x,y)$ is an unstable equilibrium point. However, it is possible to guarantee instability. The proof is the following.

Let $\mu^*$ be the largest $\mu \in (0,1)$ such that $\grad_{yy}f-\mu \epsilon_yI$ is singular. We know that this point exists because, on the one hand, by assumption $\grad_{yy}f$ is invertible (and therefore $\mu^*>0$), and on the other hand, we know that $\grad_{yy}f \nprec 0$ and that $\grad_{yy}f -\epsilon_yI \prec 0$ by construction (and therefore $\mu^*<1$). 

Now take any $\bar \mu\in(0,\mu^*)$ such that $\grad_{yy}f -\bar\mu\epsilon_yI$ is invertible (there are uncountable many). Suppose there exists $\bar \epsilon$ such that for any $\epsilon_x\ge\bar \epsilon$, the \eqref{eq:lqac} hold and $\inertia(\grad_{zz}f+\bar\mu E)\ne(n_x,n_y,0)$. If such $\bar \epsilon$ exists, then, by the continuity of the eigenvalues, if $\inertia(\grad_{zz}f+\bar\mu E)\ne(n_x,n_y,0)$ this means that $\grad_{zz}f+\mu E$ is singular for some $\mu\in(0,\bar \mu]$. 

So, to conclude the proof, we just need to show the existence of such $\bar \epsilon$. Take any $\epsilon_x$ such that $\inertia(\grad_{zz}f+\bar\mu E)=(n_x,n_y,0)$ (otherwise the proof is tautological). From Haynsworth inertia additivity formula, we have that 
$$
\inertia(\grad_{zz}f+\bar\mu E)= \inertia(R(\bar\mu)) + \inertia(\grad_{yy}f -\bar\mu\epsilon_yI)
$$
with $\inertia(R(\bar\mu))=(n_x-k,k,0)$ and $\inertia(\grad_{yy}f -\bar\mu\epsilon_yI)=(k,n_y-k,0)$ for some $k\in\{1,\dots,\min(n_x,n_y)\}$. On the one hand, it is straightforward to establish that $\exists  \bar\epsilon_1$ such that if $\epsilon_x\ge\bar\epsilon_1$, then $\inertia(R(\bar\mu))\neq(n_x-k,k,0)$, which means that $\inertia(\grad_{zz}f+\bar\mu E)\ne(n_x,n_y,0)$. On the other hand, $\exists  \bar\epsilon_2$ such that if $\epsilon_x\ge\bar\epsilon_2$, then $\inertia(\grad_{zz}f+\mu E)=(n_x,n_y,0)$. Therefore, we can define $\bar\epsilon=\max(\bar\epsilon_1,\bar\epsilon_2)$, which concludes the proof  
\end{proof}

\subsection{Constrained minmax} \label{sc:minmax-constrained}

We now consider the case with more general constraint sets involving equality and inequality constraints of the form
\begin{equation}\label{eq:constraints}
\begin{split}
&\Xcal=\{x\in \eR^{n_x}: G_x(x)=\zerob,F_x(x)\le \zerob\} \quad \tand\\
&\Ycal(x)=\{y\in\eR^{n_y}: G_y(x,y)=\zerob,F_y(x,y)\le \zerob\}
\end{split}
\end{equation}
where the functions $G_x : \eR^{n_x} \to \eR^{l_x}$, $F_x : \eR^{n_x} \to \eR^{m_x}$, $G_y: \eR^{n_x} \times \eR^{n_y} \to \eR^{l_y}$ and $F_y: \eR^{n_x} \times \eR^{n_y} \to \eR^{m_y}$ are all twice continuously differentiable. Similar to what we did in Section \ref{sc:minimization-constrained}, it will be convenient for the development of the primal-dual interior-point method to use slack variables and rewrite the constrained minmax \eqref{eq:minmaxconstrained} as
\begin{equation}\label{eq:minmaxconstrained_slack}
\min_{x,s_x: G_x(x)=\zerob,F_x(x)+s_x=\zerob,s_x\ge \zerob}\quad\max_{y,s_y: G_y(x,y)=\zerob,F_y(x,y)+s_y=\zerob,s_y\ge \zerob} f(x,y).
\end{equation}
where $s_x\in\eR^{m_x}$ and $s_y\in\eR^{m_y}$.

Similar to what we have done in the unconstrained case, we want to present second order conditions to determine if a point is a constrained local minmax. In order to do so, we need to extend some fundamental concepts of constrained minimization to constrained minmax optimization. The function 
\begin{equation*}
L(z):=f(x,y) + \nu_x'G_x(x) + \lambda_x'(F_x(x)+s_x)  +  \nu_y'G_y(x,y) - \lambda_y'(F_y(x,y)+s_y),
\end{equation*}
will play an equivalent role as the Lagrangian with $(\nu_x,\nu_y,\lambda_x,\lambda_y)$ as the equivalent of Lagrange multipliers; we use the shorthand notation $z=(x,s_x,y,s_y,\nu_y,\lambda_y,\nu_x,\lambda_x)$. Furthermore, we use the following definition of linear independence constraint qualifications (LICQ) and of strict complementarity for minmax optimization:
\begin{definition}[LICQ and strict complementarity for minmax]
	Let the sets of active inequality constraints for the minimization and maximization be defined, respectively, by
	\begin{equation*}
	\begin{split}
	&\Acal_x(x)=\{i=1,\dots,m_x : F_x^{(i)}(x)=0\} \tand \\
	&\Acal_y(x,y)=\{i=1,\dots,m_y : F_y^{(i)}(x,y)=0\}
	\end{split}
	\end{equation*}
	where $F_x^{(i)}(x)$ and $F_y^{(i)}(x,y)$ denote the i\textsuperscript{th} element of $F_x(x)$ and $F_y(x,y)$. Then:
	\begin{itemize}
		\item  The linear independence constraint qualification (LICQ) is said to hold at $z$ if the vectors in the sets
		\begin{gather*}
		\{\grad_x G_x^{(i)}(x), i=1,\dots,l_x\}\bigcup\{\grad_x F_x^{(i)}(x), i \in \Acal_x(x) \} \tand\\
		\{\grad_y G_y^{(i)}(x,y), i=1,\dots,l_y\}\bigcup\{\grad_y F_y^{(i)}(x,y), i \in \Acal_y(x,y) \}
		\end{gather*}
		are linearly independent.
		\item Strict complementarity is said to hold at $z$ if $\lambda_y^{(i)}>0 \ \forall i\in \Acal_y(x,y)$ and $\lambda_x^{(i)}>0 \ \forall i\in \Acal_x(x)$ 
	\end{itemize}
\end{definition}

We have almost all the ingredients to present the second order conditions for constrained minimization. For the unconstrained minmax optimization, the second order condition in Proposition \ref{prop:second-order-unconstrained-minmax} required that gradients ($\grad_{x}f(x,y)$ and $\grad_{y}f(x,y)$) were equal to zero and that Hessians ($\grad_{zz}f(x,y)$ and $\grad_{yy}f(x,y)$) had a particular inertia. Analogously to what was the case for the constrained minimization in Section \ref{sc:minimization-constrained}, if it were not for the inequality constraints in \eqref{eq:constraints}, we would be able to state the second order conditions using gradients and Hessians of $L(z)$. The inequality constraints make the statement a bit more complicated. The role of the gradient will be played by
\begin{equation*}
g(z,b):=\begin{bmatrix}
&\grad_{x} L(z) \\
& \lambda_x \odot s_x - b \oneb \\
&\grad_{y} L(z) \\
& -\lambda_y \odot s_y + b \oneb  \\
& G_y(x,y)  \\
& -F_y(x,y)- s_y\\
& G_x(x)  \\
& F_x(x) +s_x
\end{bmatrix}
\end{equation*}
where $\odot$ denotes the element wise Hadamard product of two vectors and $b\ge0$  the barrier parameter, which is the extension to minmax of the function $g(\cdot)$ defined in \eqref{eq:kkt_min} for the minimization. The role of
$\grad_{yy}f(x,y)$ will be played by
\begin{subequations}\label{eq:definingblocs}
\begin{equation}
H_{yy}f(z)=
\begin{bmatrix}
\grad_{yy} L(z) & \zerob &  \grad_y G_y(x,y) & -\grad_y F_y(x,y) \\
\zerob & -\diag(\lambda_y) &\zerob & -\diag(s_y^{1/2})  \\
\grad_y G_y(x,y)' &\zerob &\zerob &\zerob  \\
-\grad_y F_y(x,y)' & -\diag(s_y^{1/2}) & \zerob & \zerob
\end{bmatrix},
\end{equation}
while the role of $\grad_{zz}f(x,y)$ will be played by
\begin{equation} \label{eq:definingblocs_b}
H_{zz}f(z)=
\begin{bmatrix}
H_{xx}f(z) & H_{xy}f(z) &  H_{x\lambda}f(z)\\
H_{xy}f(z)' &  H_{yy}f(z) & \zerob \\
H_{x\lambda}f(z)' & \zerob & \zerob
\end{bmatrix}
\end{equation}
with blocks defined by
\begin{equation} 
\begin{split}
&H_{xy}f(z)=
\begin{bmatrix}
\grad_{xy} L(z) & \zerob & \grad_x G_y(x,y) &-\grad_x F_y(x,y)\\
\zerob & \zerob & \zerob &\zerob \\
\end{bmatrix}
\\
&H_{xx}f(z)=
\begin{bmatrix}
\grad_{xx} L(z) & \zerob \\
\zerob & \diag(\lambda_x) 
\end{bmatrix} 
\quad
H_{x\lambda}f(z)=
\begin{bmatrix}
\grad_x G_x(x) & \grad_x F_x(x)\\
\zerob & \diag(s_x^{1/2})
\end{bmatrix}
\end{split}
\end{equation}
\end{subequations}
\begin{proposition}[Second order sufficient conditions for constrained minmax] \label{prop:second-order-constrained-minmax}
Let $z$ be an equilibrium point in the sense that $g(z,0)=\zerob$ with $\lambda_y,\lambda_x,s_y,s_x\ge\zerob$. If the LICQ and strict complementarity hold at $z$ and
\begin{equation} \label{eq:second_order_inertia_constrained}
\begin{split}
\inertia( H_{yy}f(z))=(l_y+&m_y,n_y+m_y,0) \tand\\
\inertia( H_{zz}f(z))=(n_x+m_x+l_y+&m_y,l_x+m_x+n_y+m_y,0)
\end{split}
\end{equation}
then $(x,y)$ is a local minmax of \eqref{eq:minmaxconstrained}. 
\end{proposition}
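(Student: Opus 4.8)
The plan is to establish the two halves of the definition of a constrained local minmax separately: that $y^*$ is a strict local maximizer of the inner problem at $x^*$ (which gives the left inequality), and that $x^*$ is a strict local minimizer of the induced value function over $\Xcal$ (which, after choosing $h(\cdot)$, gives the right inequality), reusing for each the inertia reduction behind Proposition~\ref{prop:constrsecondorder_min}. Write $(x^*,y^*)$ for the primal part of $z$. Because strict complementarity gives $\lambda_x^{(i)}>0$ on $\Acal_x(x^*)$ and $\lambda_y^{(i)}>0$ on $\Acal_y(x^*,y^*)$, continuity lets us, throughout a small neighbourhood of $(x^*,y^*)$, discard the inactive inequality constraints and treat the active ones as equalities; together with LICQ this puts us in the equality-constrained situation handled by Proposition~\ref{prop:constrsecondorder_min}'s reduction.

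First I would treat the inner maximization $\max_{y\in\Ycal(x^*)}f(x^*,y)$. Applying the Sylvester-law reduction of Proposition~\ref{prop:constrsecondorder_min} to $\min_y-f(x^*,y)$ over $\Ycal(x^*)$, the hypothesis $\inertia(H_{yy}f(z))=(l_y+m_y,\,n_y+m_y,0)$ is exactly the second-order sufficient condition for $y^*$ to be a strict local maximizer of $f(x^*,\cdot)$ over $\Ycal(x^*)$, which yields $f(x^*,y)\le f(x^*,y^*)$ near $y^*$. Since nonsingularity of the Jacobian in $(y,s_y,\nu_y,\lambda_y)$ of the inner block of $g$ (the entries $\grad_yL$, $-\lambda_y\odot s_y+b\oneb$, $G_y$, $-F_y-s_y$) is equivalent to LICQ together with nonsingularity of this reduced Hessian, both of which hold, the implicit function theorem produces a $C^1$ branch $x\mapsto(y(x),s_y(x),\nu_y(x),\lambda_y(x))$ through $z$ with, for $x$ near $x^*$, $y(x)$ the unique local maximizer of $f(x,\cdot)$ near $y^*$ and $\lambda_y^{(i)}(x)>0$ on $\Acal_y$.

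Next I would analyze the value function $\phi(x):=f(x,y(x))$. By the envelope theorem its gradient equals the $x$-gradient of the inner Lagrangian evaluated along the branch, so $\grad\phi$ is $C^1$ and $\phi$ is $C^2$ near $x^*$; the outer first-order conditions hold at $x^*$ with the multipliers $(\nu_x,\lambda_x)$ and slack $s_x$ inherited from $z$ (because $\grad\phi(x^*)+\grad_xG_x(x^*)\nu_x+\grad_xF_x(x^*)\lambda_x=\grad_xL(z)=0$), and standard second-order sensitivity analysis of the inner program identifies the $x$-Hessian of the outer Lagrangian at $x^*$ with the $(1,1)$-block of $\hat H:=H_{xx}f(z)-H_{xy}f(z)\,H_{yy}f(z)\inv H_{xy}f(z)'$, while the remaining blocks of $\hat H$ and of $H_{x\lambda}f(z)$ reproduce exactly the remaining data of the constrained minimization $\min_{x\in\Xcal}\phi(x)$ in slack form. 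Applying Haynsworth inertia additivity to the block form \eqref{eq:definingblocs_b} of $H_{zz}f(z)$, eliminating the nonsingular block $H_{yy}f(z)$, gives
\begin{equation*}
\inertia\left(\begin{bmatrix}\hat H & H_{x\lambda}f(z) \\ H_{x\lambda}f(z)' & \zerob\end{bmatrix}\right)=\inertia(H_{zz}f(z))-\inertia(H_{yy}f(z))=(n_x+m_x,\,l_x+m_x,0),
\end{equation*}
using \eqref{eq:second_order_inertia_constrained}. This is precisely the matrix and inertia hypothesis \eqref{eq:second_order_inertia_constrained_min} of Proposition~\ref{prop:constrsecondorder_min} applied to $\min_{x\in\Xcal}\phi(x)$, so that proposition yields that $x^*$ is a strict local minimum of $\phi$ over $\Xcal$.

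Finally, to stitch the levels together: since $y(\cdot)$ is continuous with $y(x^*)=y^*$, choose a positive $h$ with $h(\delta)\to0$ as $\delta\to0$ and $\delta_0>0$ so small that for $\norm{x-x^*}\le\delta\le\delta_0$ one has $y(x)\in\Ycal(x)$, $\norm{y(x)-y^*}\le h(\delta)$, everything above valid, and the inner strict-max statement holding on $\{\norm{y-y^*}\le h(\delta)\}$. Then for such $x$,
\[\max_{\tilde y\in\Ycal(x):\,\norm{\tilde y-y^*}\le h(\delta)}f(x,\tilde y)\ \ge\ f(x,y(x))=\phi(x)\ \ge\ \phi(x^*)=f(x^*,y^*),\]
which is the right inequality, while the left inequality came from the inner step; hence $(x^*,y^*)$ is a local minmax of \eqref{eq:minmaxconstrained}. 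The main obstacle is the third step: making rigorous, in the symmetrized slack-variable coordinates of \eqref{eq:definingblocs}, both that $\phi$ is twice continuously differentiable near $x^*$ and that the $x$-Hessian of its Lagrangian is exactly the Schur complement $\hat H$. The active-set bookkeeping, which uses strict complementarity to keep $\Acal_x$ and $\Acal_y$ frozen along the branch, and the dimension counting in the Haynsworth step are routine but must be tracked with care.
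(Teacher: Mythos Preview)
Your argument is correct, but it takes a different route from the paper's. The paper does not construct the value function $\phi$ or invoke the implicit function theorem and second-order sensitivity analysis; instead it reduces the proposition to the second-order sufficient condition for constrained local minmax of Dai and Zhang \cite[Theorem~3.2]{dai_optimality_2020}, which is stated in terms of solution cones $\Ccal_y(z)$ and $\Ccal_x(z)$ (after the squared-slack substitution $s=w\odot w$, exactly as in Proposition~\ref{prop:constrsecondorder_min}). The paper's contribution is then purely algebraic: it shows that the two inertia hypotheses in \eqref{eq:second_order_inertia_constrained} are equivalent to those cone conditions, using the same null-space representation and Haynsworth additivity that you invoke in your outer step. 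So the analytic content---that the quadratic-form conditions actually force local minmax---is delegated entirely to the cited reference, whereas you prove it from scratch via the parametric branch $y(x)$ and Proposition~\ref{prop:constrsecondorder_min}. Your approach is more self-contained and makes the two-level structure explicit, but, as you correctly flag, it must pay for this with the Fiacco-type sensitivity result identifying $\grad_{xx}L_\phi$ with the $(1,1)$-block of the Schur complement $\hat H$; the paper sidesteps that obstacle completely.
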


Similar to what was the case for the second order sufficient conditions for unconstrained minmax in Proposition \ref{prop:second-order-unconstrained-minmax}, the conditions in \eqref{eq:second_order_inertia_constrained} are not symmetric, highlighting that there is a distinction between the minimizer and maximizer. Moreover, similar to the second order sufficient conditions for unconstrained minmax in Proposition \ref{prop:second-order-unconstrained-minmax}, one can interpret the second order sufficient conditions for constrained minmax as saying that the optimization $\max_{y\in\Ycal(x)} f(x,y)$ is strongly concave in a neighborhood around $(x,y)$ and that the optimization $\min_{x\in \Xcal} \phi(x)$ with $\phi(x):= \max_{\tilde y\in\Ycal(x):\norm{y-\tilde y}<\delta} f(x,\tilde y)$ is strongly convex in a neighborhood around $(x,y)$ for some $\delta>0$.

The conditions for Proposition \ref{prop:second-order-constrained-minmax} are slightly stricter than the ones in \cite{dai_optimality_2020} as we require strict complementarity and LICQ both for the max and the min. However, our conditions allow us to verify whether a point is a local minmax using the inertia, instead of having to compute solution cones. We prove that given these stricter assumptions our conditions are equivalent to those in \cite{dai_optimality_2020} in Appendix \ref{app:proof_propo_secon}.

\subsubsection{Primal-dual interior-point method}

Let $d_z=(d_x,d_{s_x},d_y,d_{s_y},d_{\nu_y},d_{\lambda_y},d_{\nu_x},d_{\lambda_x})$ be a shorthand notation to designate the update direction of the variables $z=(x,s_x,y,s_y,\nu_y,\lambda_y,\nu_x,\lambda_x)$. Similar to the basic primal-dual interior-point method introduced in Section \ref{sc:minimization-constrained}, a basic primal-dual interior-point method for minmax finds a candidate solution to  \eqref{eq:minmaxconstrained_slack} using the iterations 
\begin{equation}\label{eq:basic_ip}
z^+=z+\alpha d_z=z-\alpha \grad_{z}g(z,b)'\,\inv g(z,b)
\end{equation}
where the barrier parameter $b$ is slowly decreased to $0$, so that $z$ converges to a root of $g(z,0)=\zerob$ while $\alpha\in (0,1]$ is chosen at each step such that the feasibility conditions $\lambda_y,\lambda_x,s_y,s_x>0$ hold. We want to modify this basic primal-dual interior-point so it satisfies the properties P1, P2 and P3.

In order to obtain property P1, we propose to obtain $d_z$ from the solution of a quadratic program that locally approximates \eqref{eq:minmaxconstrained_slack}. Using equivalent arguments as in the development of the quadratic program \eqref{eq:min_QP} for the constrained minimization in Section \ref{sc:minimization-constrained}, we obtain that the objective function should be
\begin{multline*}
K(d_x,d_{s_x},d_y,d_{s_y})=L(z)+\grad_{x}L(z)'d_x+(\lambda_x-b\oneb\oslash s_x)' d_{s_x} + \grad_{y}L(z)'d_y
\\-(\lambda_y-b\oneb\oslash s_y)' d_{s_y} + d_x'\grad_{xy}L(z)d_y 
+\frac{1}{2}d_x'(\grad_{xx}L(z)+\epsilon_x(z)I)d_x
\\+\frac{1}{2}d_{s_x}'\diag(\lambda_x\oslash s_x)d_{s_x}+\frac{1}{2}d_y'(\grad_{yy}L(z)-\epsilon_y(z)I)d_y-\frac{1}{2}d_{s_y}'\diag(\lambda_y\oslash s_y)d_{s_y},
\end{multline*}
where $\epsilon_x(z)\ge0$ and $\epsilon_y(z)\ge0$ are scalar and $\oslash$ designates the element wise division of two vectors. The feasible sets $d\Xcal$ for $(d_x,d_{s_x})$ and the set-valued map that defines a feasible set $d\Ycal(d_x)$ for $(d_y,d_{s_y})$ are obtained from the first order linearization of the functions in $\Xcal$ and $\Ycal(d_y)$ and are given by
\begin{align*}
&d\Xcal=\{(d_x,d_{s_x})\in\eR^{n_x}\times \eR^{m_x}:G_x(x)+\grad_xG_x(x)'d_x=\zerob,\\
& \hspace{170pt}F_x(x)+s_x+\grad_xF_x(x)'d_x+d_{s_x}=\zerob\}
\\
&d\Ycal(d_x)=\{(d_y,d_{s_y})\in\eR^{n_y}\times \eR^{m_y}: G_y(x,y)+\grad_xG_y(x,y)'d_x+\grad_yG_y(x,y)'d_y
\\
& \hspace{55pt}=\zerob ,F_y(x,y)+s_y+\grad_xF_y(x,y)'d_x+\grad_yF_y(x,y)'d_y+d_{s_y}=\zerob\}.
\end{align*}
If $\grad_xG_x(x)$ and $\grad_yG_y(x,y)$ have linearly independent columns, we propose to obtain  $(d_x,d_{s_x},d_y,d_{s_y})$ as the optimizers and $(d_{\nu_y},d_{\lambda_y},d_{\nu_x},d_{\lambda_x})$ the associated Lagrange multipliers of the minmax optimization
\begin{equation} \label{eq:QP}
\min_{\bar d_x,\bar d_{s_x}\in d\Xcal}\quad\max_{\bar d_y,\bar d_{s_y}\in d\Ycal(\bar d_x)} K(\bar d_x,\bar d_{s_x},\bar d_y,\bar d_{s_y})
\end{equation}
where $\epsilon_x(z)$ and $\epsilon_y(z)$ are chosen such that the solution to \eqref{eq:QP} is unique. We can apply to \eqref{eq:QP} the second order condition from Proposition \ref{prop:second-order-constrained-minmax} and obtain that $\epsilon_x(z)$ and $\epsilon_y(z)$ need to be chosen to satisfy
\begin{equation}\label{eq:conslqac}
\tag{ConsLQAC}
\begin{split}
\inertia(J_{yy}f(z)-E_y(z))=(l_y+&m_y,n_y+m_y,0) \tand\\
\inertia(J_{zz}f(z)+E(z))=(n_x+m_x+l_y+&m_y,l_x+m_x+n_y+m_y,0)
\end{split}
\end{equation}
where $E_y(z):= \diag(\epsilon_y(z) \oneb_{n_y}, \zerob_{l_y+2m_y})$ and $E(z):=\diag(\epsilon_x(z)\oneb_{n_x},\zerob_{m_x},-\epsilon_y(z) \oneb_{n_y}, \zerob_{l_y+2m_y+l_x+m_x})$; $J_{zz}f(z)$ is the equivalent of the matrix defined in \eqref{eq:definingblocs_b} for the problem \eqref{eq:QP} and can be shown to be equal to
\begin{equation}\label{eq:equivalenceJ}
J_{zz}f(z)=S^{-1/2}H_{zz}f(z)S^{-1/2}=S\inv \grad_{z}g(z,b)'.
\end{equation}
with $S=\diag(\oneb_{n_x},s_x,\oneb_{n_y},s_y,\oneb_{l_y+m_y+l_x+m_x})$; $J_{yy}f(z)$ is the equivalent partition of $J_{zz}f(z)$ as $H_{yy}(z)$ is of $H_{zz}(z)$. We will call these conditions the Constrained Local Quadratic Approximation Conditions \eqref{eq:conslqac}. In this case, it is straightforward to show that modifying the basic primal-dual interior-point iterations in \eqref{eq:basic_ip} by taking $d_z$ from the solution of \eqref{eq:QP} leads to the iterations
\begin{equation}\label{eq:interiorpoint}
z^+=z+\alpha d_z=z-\alpha(J_{zz}f(z)+E(z))\inv S\inv g(z,b).
\end{equation}

Analogously to what was the case in unconstrained minmax optimization, choosing $\epsilon_x(z)$ and $\epsilon_y(z)$ such that the \eqref{eq:conslqac} hold is not sufficient to guarantee that P2 and P3 hold for the modified primal-dual interior-point method (a counter example can be found in Section \ref{sc:alg-development-numerical-examples-MPC}). Our next theorem is the extensions of Theorem \ref{th:stab-newton-minmax} to the modified primal-dual interior-point and has the equivalent consequences:  For property P3 to hold, as long as $\epsilon_x(z)$ is large enough, taking $\epsilon_y(z)>0$ will not impair the algorithm's capacity to converge towards a local minmax; this can be useful, for instance, if $\inertia(J_{zz})$ has an eigenvalue close to $0$. For property P2 to hold, in order to guarantee that the modified primal-dual interior-point method cannot converge towards an equilibrium point that is not local minmax, the \eqref{eq:conslqac} are sufficient only whenever $\inertia(J_{zz}f(z))\ne(n_x+m_x+l_y+m_y,l_x+m_x+n_y+m_y,0)$. Otherwise, $\epsilon_x(z)$ needs to be taken large enough such that $\inertia(J_{zz}f(z)+\mu E(z))\ne(n_x+m_x+l_y+m_y,l_x+m_x+n_y+m_y,0)$ for some $\mu \in (0,1)$.

\begin{theorem}[Stability and instability of modified primal-dual interior-point method for constrained minmax] \label{th:stab-ip-minmax}
	Let $\alpha=1$ and $(z,b)$ with $b>0$, be an equilibrium point in the sense that $g(z,b)=\zerob$. Assume the LICQ hold at $z$, that $J_{zz}f(z)$ and $J_{yy}f(z)$ are invertible, and that $J_{zz}f(\cdot)$ is differentiable in a neighborhood around $z$. Then there exists functions $\epsilon_x(\cdot)$ and $\epsilon_y(\cdot)$ that are constant in a neighborhood around $z$, satisfy the \eqref{eq:conslqac} at $z$  and guarantee that if:		
	\begin{enumerate}
		\item[i)] $z$ is a local minmax of \eqref{eq:minmaxconstrained_slack}, then it is a LAS equilibrium of \eqref{eq:interiorpoint}.
		\item[ii)] $z$ is not a local minmax of \eqref{eq:minmaxconstrained_slack}, then it is an unstable equilibrium of \eqref{eq:interiorpoint}.
	\end{enumerate}		
\end{theorem}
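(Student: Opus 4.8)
The plan is to fuse the inertia and Schur-complement bookkeeping of the proofs of Proposition~\ref{prop:second-order-constrained-minmax} and Theorem~\ref{th:stab-ip-minimization} with the $\mu$-parametrized argument of Theorem~\ref{th:stab-newton-minmax}. First I would clear away the constraint structure. Because $b>0$ and $g(z,b)=\zerob$, the complementarity rows force $\lambda_x^{(i)}s_x^{(i)}=\lambda_y^{(i)}s_y^{(i)}=b>0$, so all slacks and multipliers are strictly positive; hence the active sets $\Acal_x(x)$ and $\Acal_y(x,y)$ are empty, strict complementarity holds vacuously, the LICQ hypothesis reduces to $\grad_xG_x(x)$ and $\grad_yG_y(x,y)$ having full column rank, and $\diag(\lambda_x\oslash s_x)\succ\zerob$, $\diag(\lambda_y\oslash s_y)\succ\zerob$. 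Using \eqref{eq:equivalenceJ} and Sylvester's law of inertia exactly as in the proof of Theorem~\ref{th:stab-ip-minimization}, $\inertia(H_{zz}f(z))=\inertia(J_{zz}f(z))$ and $\inertia(H_{yy}f(z))=\inertia(J_{yy}f(z))$, so combining Proposition~\ref{prop:second-order-constrained-minmax} with the matching second-order necessary conditions and with the assumed invertibility of $J_{zz}f(z)$ and $J_{yy}f(z)$, the point $z$ is a local minmax if and only if the \eqref{eq:conslqac} hold with $\epsilon_x=\epsilon_y=0$.

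Next I would linearize \eqref{eq:interiorpoint}. Since $g(z,b)=\zerob$ at the equilibrium, the computation used for Theorems~\ref{th:stab-newton-minimization} and~\ref{th:stab-ip-minimization} shows that the Jacobian of the right-hand side of \eqref{eq:interiorpoint} at $z$ (with $\alpha=1$) equals $I-(J_{zz}f(z)+E(z))\inv S\inv\grad_z g(z,b)'=I-(J_{zz}f(z)+E(z))\inv J_{zz}f(z)$, the second equality being \eqref{eq:equivalenceJ}. Reproducing verbatim the eigenvalue computation of Theorem~\ref{th:stab-newton-minmax}, $\rho$ is an eigenvalue of this Jacobian if and only if $J_{zz}f(z)+\mu E(z)$ is singular with $\mu=1-1/\rho$; consequently $z$ is a LAS equilibrium of \eqref{eq:interiorpoint} if $J_{zz}f(z)+\mu E(z)$ is nonsingular for every $\mu\in[0,2]$, and an unstable equilibrium if $J_{zz}f(z)+\mu E(z)$ is singular for some $\mu\in(0,2)$.

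The heart of the proof is a constrained analogue of the reduced matrix $R(\mu)$ of \eqref{eq:R_function_for_unconstrained}. Using the block form \eqref{eq:definingblocs_b} of $H_{zz}f(z)$ transported to $J_{zz}f(z)$ via \eqref{eq:equivalenceJ}, I would first take the Schur complement of $J_{zz}f(z)+\mu E(z)$ with respect to its $J_{yy}f(z)-\mu E_y(z)$ block, and then eliminate the remaining minimizer-constraint rows with a full-column-rank null-space matrix $Z_x(z)$, just as in the proof sketch of Theorem~\ref{th:stab-ip-minimization} (this second elimination is $\mu$-independent in its constraint part and the diagonal block $\diag(\lambda_x\oslash s_x)$ it carries is positive definite). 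Two applications of the Haynsworth inertia additivity formula \cite[Theorem~1.6]{zhang_schur_2005} -- legitimate whenever $J_{yy}f(z)-\mu E_y(z)$ is nonsingular -- then produce a symmetric matrix $R(\mu)$ of fixed size $n_x-l_x$, depending on $\epsilon_x,\epsilon_y$ in the same affine-plus-Schur-complement manner as \eqref{eq:R_function_for_unconstrained}, and a fixed offset $(l_x+m_x,l_x+m_x,0)$ for which
\begin{equation*}
\inertia\big(J_{zz}f(z)+\mu E(z)\big)=\inertia\big(R(\mu)\big)+\inertia\big(J_{yy}f(z)-\mu E_y(z)\big)+(l_x+m_x,l_x+m_x,0),
\end{equation*}
with the normalizations $R(0)\succ\zerob\iff$ the second-order sufficient conditions \eqref{eq:second_order_inertia_constrained} and $R(1)\succ\zerob\iff$ the \eqref{eq:conslqac}. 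From here the argument is that of Theorem~\ref{th:stab-newton-minmax} with $\grad_{zz}f$, $\grad_{yy}f$, $R$ replaced by $J_{zz}f$, $J_{yy}f-\mu E_y$, and this new $R$. If $z$ is a local minmax ($\epsilon_x=\epsilon_y=0$ admissible), I would impose the constrained counterpart of \eqref{eq:minmax-eps-cond} -- take $\epsilon_x$ large enough that $\tfrac{d}{d\mu}\,v'R(\mu)v\ge0$ for every $v$ and every $\mu\ge0$ -- so that $R(\mu)\succeq R(0)\succ\zerob$ while $\inertia(J_{yy}f(z)-\mu E_y(z))$ stays equal to $(l_y+m_y,n_y+m_y,0)$ for all $\mu\in[0,2]$, hence $J_{zz}f(z)+\mu E(z)$ is nonsingular on $[0,2]$ and $z$ is LAS. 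If $z$ is not a local minmax, I would split exactly as in Theorem~\ref{th:stab-newton-minmax}: when $\inertia(J_{zz}f(z))$ is not the target inertia, the \eqref{eq:conslqac} and continuity of the eigenvalues already force $J_{zz}f(z)+\mu E(z)$ singular for some $\mu\in(0,1)$; when $\inertia(J_{zz}f(z))$ is the target inertia but $\inertia(J_{yy}f(z))$ is not, I would run the $\mu^*/\bar\mu$ device, choosing $\epsilon_x\ge\max(\bar\epsilon_1,\bar\epsilon_2)$ so that the \eqref{eq:conslqac} hold while $\inertia(R(\bar\mu))$ is pushed off its target, whence $\inertia(J_{zz}f(z)+\bar\mu E(z))$ is off target and, by continuity, $J_{zz}f(z)+\mu E(z)$ is singular for some $\mu\in(0,\bar\mu]$.

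I expect the main obstacle to be the clean construction of $R(\mu)$ in the constrained setting -- tracking the several fixed inertia offsets so that $R(0)\succ\zerob$ and $R(1)\succ\zerob$ correspond \emph{exactly} to \eqref{eq:second_order_inertia_constrained} and to the \eqref{eq:conslqac} -- together with the verification that the two nested Schur complements remain nonsingular over the relevant ranges of $\mu$. The inner reduction through the minimizer-constraint block is harmless since $\grad_xG_x(x)$ has full column rank and $\diag(\lambda_x\oslash s_x)\succ\zerob$; the outer reduction through $J_{yy}f(z)-\mu E_y(z)$ can fail at intermediate $\mu$, but this is precisely the $\mu^*$ phenomenon already met in Theorem~\ref{th:stab-newton-minmax}, and the device of choosing $\bar\mu<\mu^*$ carries over unchanged. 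The remaining work is notation-heavy but routine, since each individual Schur-complement reduction has already been performed in the proofs of Proposition~\ref{prop:second-order-constrained-minmax} and Theorem~\ref{th:stab-ip-minimization}.
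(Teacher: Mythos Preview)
Your proposal follows essentially the same route as the paper: compute the Jacobian $I-(J_{zz}f(z)+E(z))\inv J_{zz}f(z)$ via \eqref{eq:equivalenceJ}, set up an inertia decomposition through null-space matrices and Schur complements to obtain a reduced matrix $R(\mu)$ (the paper actually carries two, $R_y(\mu)$ and $R_x(\mu)$, but your single $R(\mu)$ is exactly their $R_x(\mu)$), and then replay the $\mu$-parametrized stability/instability argument of Theorem~\ref{th:stab-newton-minmax} with these replacements. Your observation that $b>0$ forces $\lambda_x,\lambda_y,s_x,s_y>0$ and hence empties the active sets is correct and is implicit in the paper.

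There is one step you underestimate. In the stability case you write ``take $\epsilon_x$ large enough that $\tfrac{d}{d\mu}\,v'R(\mu)v\ge 0$ for every $v$,'' treating this as a direct analogue of \eqref{eq:minmax-eps-cond}. In the constrained setting this is \emph{not} immediate: the regularizer $E_x(z)=\diag(\epsilon_x\oneb_{n_x},\zerob_{m_x})$ has only $n_x$ nonzero diagonal entries, while the competing term
\[
M:=-J_{xy}f(z)\,J_{yy}f(z)\inv E_y(z)\,J_{yy}f(z)\inv J_{yx}f(z)
\]
lives on the full $(n_x+m_x)\times(n_x+m_x)$ block, so increasing $\epsilon_x$ need not dominate $M$ on the $s_x$-coordinates. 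The paper singles this out as ``the only extra argument needed'' and closes it by exploiting the structural zeros of $J_{xy}f(z)$ and $E_y(z)$: these force $\rank(M)\le\min(n_x,n_y)$ with the nonzero eigenspace sitting inside the $x$-block, so after diagonalizing $M$ one sees that $E_x(z)$ can indeed compensate every negative eigenvalue. You should flag this rank argument explicitly rather than folding it into ``notation-heavy but routine''; it is the one place where the constrained stability proof genuinely diverges from the unconstrained one.
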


\begin{proof}
Let us define the partitions, $J_{xx}f(z)$, $J_{yx}f(z)$, and $J_{x\lambda}f(z)$ of $J_{zz}f(z)$ analogously to the partitions $ H_{xx}f(z)$, $ H_{yx}f(z)$, and $ H_{x\lambda}f(z)$ of $ H_{zz}f(z)$.	
	
Using the same arguments as in the proof of Theorem \ref{th:stab-newton-minimization}, we conclude that the Jacobian of the dynamic system \eqref{eq:interiorpoint} around a point $z$ such that $g(z,b)=\zerob$ is
\begin{equation}\label{eq:jacob_ip}
I- \alpha \Big(J_{zz}f(z)+E(z)\Big)\inv S\inv \grad_zg(z,b)'=I- \alpha \Big(J_{zz}f(z)+E(z)\Big)\inv J_{zz}f(z)
\end{equation}

Moreover from \eqref{eq:equivalenceJ} we have that $\inertia(H_{zz}f(z))=\inertia(S^{1/2}J_{zz}f(z)S^{1/2})$. Using Sylvester's law of inertia \cite[Theorem 1.5]{zhang_schur_2005}, this simplifies to $\inertia(H_{zz}f(z))=\inertia(J_{zz}f(z))$. If a point $z$ is such that $g(z,b)=\zerob$, then one can check \eqref{eq:second_order_inertia_constrained} using $J_{zz}f(z)$ and $J_{yy}f(z)$.

Let us define the matrices
\begin{subequations}
	\begin{equation}
	R_y(\mu)=Z_y(z)'\begin{bmatrix}
	\grad_{yy} L(z) -\epsilon(z)\mu I & \zerob \\
	\zerob & -\diag(\lambda_y\oslash s_y)
	\end{bmatrix}Z_y(z)
	\end{equation}
\begin{equation} \label{eq:S_matrix_constrained}
	R_x(\mu)=Z_x(z)'\Big(J_{xx}f(z) - J_{xy}f(z) (J_{yy}f(z) - \mu E_y(z))^{-1} J_{yx}f(x,y) + \mu E_x(z)\Big)Z_x(z)
\end{equation}
\end{subequations}
where $Z_{y}(z)\in\eR^{n_y+m_y,n_y-l_y}$ and $Z_{x}(z)\in\eR^{n_x+m_x,n_x-l_x}$ are any full column rank matrices such that
\begin{equation}
\begin{bmatrix}
\grad_{y} G_y(x,y) & -\grad_{y}F_y(x,y) \\
-I & 0
\end{bmatrix}\,Z_y(z)=\zerob \quad \tand \quad J_{x\lambda}f(z)'\,Z_x(z)=\zerob.
\end{equation}
Using the same reasoning as in the proof of Proposition \ref{prop:second-order-constrained-minmax} one can conclude that
\begin{gather*}
\inertia(J_{yy}f(z)-\mu E_y(z))=\inertia(R_y(\mu))+(l_y+m_y,l_y+m_y,0)\\
\inertia(J_{zz}f(z)+\mu E(z))=\inertia(R_x(\mu))+\inertia(J_{yy}f(z)-\mu E_y(z))+(l_x+m_x,l_x+m_x,0),
\end{gather*}
which implies that the \eqref{eq:conslqac} can be stated as
\begin{equation*}
R_y(1)\prec \zerob \quad \tand \quad R_x(1)\succ \zerob.
\end{equation*}
This means that the exact same arguments used in the proof of the unconstrained minmax in Theorem \ref{th:stab-newton-minmax} can be used for the constrained case. More specifically, each arguments with 
\begin{gather*}
\grad_{yy}f(x,y)-\epsilon_y(x,y)\mu I \\
\tand \\
\grad_{xx}f(x,y) - \grad_{xy}f(x,y) (\grad_{yy}f(x,y) - \mu\epsilon_y(x,y) I)^{-1} \grad_{yx}f(x,y) + \mu\epsilon_x(x,y)I.
\end{gather*}
has an analogous statement with $R_y(\mu)$ and $R_x(\mu)$, respectively. For the sake of completeness, we highlight the main points of the analogy.

\bigskip

First, when $z$ is such that \eqref{eq:second_order_inertia_constrained} holds, the sufficient condition for $z$ to be a LAS equilibrium point of \eqref{eq:interiorpoint} is that
\begin{equation} \label{eq:minmax-eps-cond_cons}
\grad_\mu R_x(0)=Z_x(z)'\Big(E_x(z)-J_{xy}f(z) J_{yy}f(z)^{-1}E_y(z)J_{yy}f(z)^{-1} J_{yx}f(z)\Big) Z_x(z)\succeq 0.
\end{equation}
The only extra argument needed is to show that condition \eqref{eq:minmax-eps-cond_cons} is always feasible for some $\epsilon_x(z)$ large enough. This is not evident as the matrix
$$
M:=-J_{xy}f(z) J_{yy}f(z)^{-1}E_y(z)J_{yy}f(z)^{-1} J_{yx}f(z)
$$
has size $(n_x+m_x)\times (n_x+m_x)$ while $E_x(z)$ only has $n_x$ nonzero elements in the diagonal. However, because of the structural zeros in $J_{xy}f(z)$ and $E_y(z)$, one can verify with some algebraic manipulation that $\rank(M):=r\le\min(n_x,n_y)$. Let $\Lambda$ be the matrix with eigenvalues of $M$ in decreasing order and $V$ its associated eigenvectors such that $M=V\Lambda V'$. We can partition $V$ into $V_1$ of size $(r,r)$ associated to the nonzero eigenvalues of $M$ and $V_2=I_{n_x+m_x-r}$. This partition means that $E_x(z)=V'E_x(z)V$, which means on can conclude that
\begin{equation*}
\grad_\mu R_x(\mu)=Z_x(z)'V'\Big(E_x(z)+\Lambda\Big)VZ_x(z),
\end{equation*}
which implies that one can always take $\epsilon_x$ large enough such that for each negative diagonal entries of $\Lambda$, the equivalent diagonal element of $(E_x(z)+\Lambda)$ is positive.

\bigskip

Now the second part, let us prove the statement when $z$ is such that the second order conditions in \eqref{eq:second_order_inertia_constrained} do not hold. We need to prove that 
\begin{equation} \label{eq:gradyymubar}
J_{zz}f(z)+\mu E(z)
\end{equation}
is singular for some $\mu\in(0,1)$. On the one hand, using the same analysis as in the proof of Theorem \ref{th:stab-newton-minmax}, we conclude that the \eqref{eq:conslqac} are sufficient to guarantee that $z$ is an unstable equilibrium point of \eqref{eq:interiorpoint} if $\inertia(J_{zz}f(z))\neq(n_x+m_x+l_y+m_y,l_x+m_x+n_y+m_y,0)$. On the other hand, if $\inertia(J_{zz}f(z))=(n_x+m_x+l_y+m_y,l_x+m_x+n_y+m_y,0)$, than we can guarantee that by taking $\epsilon_x$ sufficiently large, there is a $\mu\in(0,1)$ such that $\inertia(J_{zz}f(z)+\mu E)\neq(n_x+m_x+l_y+m_y,l_x+m_x+n_y+m_y,0)$, which means that $z$ is an unstable equilibrium point of \eqref{eq:interiorpoint}. This concludes the proof. 
\end{proof}

\section{Algorithmic development and numerical examples} \label{sc:alg-development-numerical-examples}

The following algorithm combines the result of the previous section to propose a method for selecting $\epsilon_x(z)$ and $\epsilon_y(z)$ that satisfies the \eqref{eq:conslqac} and guarantees the stability properties of Theorem \ref{th:stab-ip-minmax}. We only state the algorithm for the constrained case, its specialization to the unconstrained case is straightforward. In order to keep the algorithm more simple and to highlight the instability property, we chose to use the functions $\epsilon_y(\cdot)=\epsilon_x(\cdot)=0$ whenever the algorithm is near a local minmax. 

\begin{algorithm}[H]
	\begin{algorithmic}[1]
		\Require  An initial point $z=(x,s_x,y,s_y,\nu_y,\lambda_y,\nu_x,\lambda_x)$, an initial barrier parameter value $b$, a barrier reduction factor $\sigma\in(0,1)$, a stopping accuracy $\delta_s\ge0$, a $\delta_\epsilon>0$ that defines a neighborhood for stopping to adjust $\epsilon_x$ and $\epsilon_y$.
		\While{$\norm{g(z,b)}_\infty>\delta_s$}
		\If{$\norm{g(z,b)}_\infty>\delta_\epsilon$}
		\State $\epsilon_x\leftarrow0, \epsilon_y \leftarrow 0$
		\If{\eqref{eq:conslqac} cannot be satisfied with $\epsilon_y=\epsilon_x=0$}
			\State Increase $\epsilon_y$ until
			$$
			\inertia(J_{yy}f(z)-E_y)=(l_y+m_y,n_y+m_y,0)
			$$
			\State Increase $\epsilon_x$ until
			$$
			\inertia(J_{zz}f(z)+E)=(n_x+m_x+l_y+m_y,l_x+m_x+n_y+m_y,0)
			$$
			\If{$\inertia(J_{zz}f)=(n_x+m_x+l_y+m_y,l_x+m_x+n_y+m_y,0)$} \label{algline:instability}
				\State Increase $\epsilon_x$ until, for some value of $\mu\in(0,1)$,
			$$
			\inertia(J_{zz}f(z)+\mu E(z))\neq(n_x+m_x+l_y+m_y,l_x+m_x+n_y+m_y,0)
			$$							
			\EndIf
		\EndIf
		\EndIf
		\State Compute a new $z$ using the equation
		$$
		z\leftarrow z- \alpha \Big(J_{zz}f(z)+E\Big)\inv S\inv g(z,b)
		$$
		\Statex \hspace{\algorithmicindent} \hspace{\algorithmicindent}where $\alpha\in(0,1]$ is selected such that the feasibility conditions 
		\Statex \hspace{\algorithmicindent} \hspace{\algorithmicindent} $\lambda_y ,\lambda_x,s_y,s_x> \zerob$ hold.
		\If{$\norm{g(z,b)}_\infty\le b$}
		\State $b\leftarrow\sigma\,b$		
		\EndIf		
		\EndWhile
	\end{algorithmic}
	\caption{Primal-dual interior-point method for minmax}
	\label{alg:ip-minmax}
\end{algorithm}

\begin{proposition}[Construction of the modified primal-dual interior-point method] \label{prop:construction_of_epsilons_numerically}
Algorithm \ref{alg:ip-minmax} generates functions $\epsilon_x(\cdot)$ and $\epsilon_y(\cdot)$ that satisfy the conditions of Theorem \ref{th:stab-ip-minmax} in the neighborhood of any equilibrium point $z^*$ that satisfy the assumptions of Theorem \ref{th:stab-ip-minmax}.  
\end{proposition}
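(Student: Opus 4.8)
The strategy is to show that, in a neighborhood of any equilibrium point $z^*$ meeting the hypotheses of Theorem~\ref{th:stab-ip-minmax}, the $\epsilon$-selection performed by Algorithm~\ref{alg:ip-minmax} realizes a valid instance of the constructive choices of $\epsilon_x(\cdot),\epsilon_y(\cdot)$ extracted in the proofs of Theorems~\ref{th:stab-newton-minmax} and~\ref{th:stab-ip-minmax}. Accordingly, I would verify the four requirements that Theorem~\ref{th:stab-ip-minmax} imposes: (a) the functions $\epsilon_x(\cdot),\epsilon_y(\cdot)$ the algorithm produces are finite and well defined near $z^*$; (b) they are constant in a neighborhood of $z^*$; (c) they satisfy the~\eqref{eq:conslqac} at $z^*$; and (d) the iteration~\eqref{eq:interiorpoint} they induce has $z^*$ as a LAS equilibrium when $z^*$ is a local minmax and as an unstable equilibrium otherwise.

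For (a), one checks that every ``Increase $\dots$ until $\dots$'' loop halts at a finite value. The two loops enforcing the inertia identities of~\eqref{eq:conslqac} halt because $J_{yy}f(\cdot)$ and $J_{zz}f(\cdot)$ are continuous and hence have eigenvalues bounded uniformly on a neighborhood of $z^*$, so subtracting $\epsilon_y I$ from the $\grad_{yy}L(z)$ block (resp.\ adding $\epsilon_x I$ to the $\grad_{xx}L(z)$ block) makes the affected eigenvalues negative (resp.\ positive) once the scalar is large enough; this is the bounded-eigenvalue argument opening the proof of Theorem~\ref{th:stab-newton-minmax}, transported to the matrices $R_y(\mu)$, $R_x(\mu)$ of the proof of Theorem~\ref{th:stab-ip-minmax} through the Haynsworth inertia-additivity arguments used there. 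The loop at line~\ref{algline:instability} halts by the argument in the second part of those two proofs: at a $\mu\in(0,1)$ at which $J_{yy}f(z)-\mu E_y(z)$ is nonsingular but not negative definite, $R_y(\mu)$ still has a positive eigenvalue, and, since $J_{xy}f(z)$ and $E_y(z)$ have the structural zeros noted there, the matrix $M:=-J_{xy}f(z)J_{yy}f(z)^{-1}E_y(z)J_{yy}f(z)^{-1}J_{yx}f(z)$ has rank at most $\min(n_x,n_y)$, so enlarging the $n_x$-dimensional block $\epsilon_x I$ suffices to move $\inertia(J_{zz}f(z)+\mu E(z))$ off the inertia that the second relation of~\eqref{eq:conslqac} demands of $J_{zz}f(z)+E(z)$, which is the loop's termination criterion.

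Properties (c) and (d) then follow by separating two cases. If $z^*$ is a local minmax, then~\eqref{eq:second_order_inertia_constrained} holds at $z^*$, and, using $\inertia(H_{zz}f(z^*))=\inertia(J_{zz}f(z^*))$ from~\eqref{eq:equivalenceJ}, this says the~\eqref{eq:conslqac} hold at $z^*$ with $\epsilon_x=\epsilon_y=0$; there the algorithm keeps $\epsilon_x=\epsilon_y=0$, so~\eqref{eq:minmax-eps-cond_cons} holds trivially (its right-hand side vanishes) and Theorem~\ref{th:stab-ip-minmax}(i) gives LAS. If $z^*$ is not a local minmax, then~\eqref{eq:second_order_inertia_constrained} fails, so with $\epsilon_x=\epsilon_y=0$ the~\eqref{eq:conslqac} fail and the adjustment block is entered; its first two steps restore the two inertia identities of~\eqref{eq:conslqac}, which is (c). For (d) in this case: if $\inertia(J_{zz}f(z^*))$ differs from the inertia that~\eqref{eq:conslqac} demands of $J_{zz}f(z^*)+E(z^*)$, then~\eqref{eq:conslqac} already force $J_{zz}f(z^*)+\mu E(z^*)$ singular for some $\mu\in(0,1)$ by continuity of the eigenvalues; if instead it equals that inertia, the step at line~\ref{algline:instability} supplies exactly the extra requirement of Theorem~\ref{th:stab-ip-minmax}(ii); either way $z^*$ is an unstable equilibrium of~\eqref{eq:interiorpoint}.

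The step I expect to be the main obstacle is (b), because the $\epsilon$-update in Algorithm~\ref{alg:ip-minmax} is path-dependent: the adjustment block is skipped whenever $\norm{g(z,b)}_\infty\le\delta_\epsilon$, so near $z^*$ (where $g(z^*,b)=\zerob$) the values $\epsilon_x,\epsilon_y$ are retained from the last iterate outside the $\delta_\epsilon$-ball rather than recomputed at $z$. To read these off as genuine functions $\epsilon_x(\cdot),\epsilon_y(\cdot)$ that are \emph{constant} on a neighborhood of $z^*$, as Theorem~\ref{th:stab-ip-minmax} requires, I would use the invertibility hypotheses: since $J_{zz}f(z^*)$ and $J_{yy}f(z^*)$ are invertible and $J_{zz}f(\cdot),J_{yy}f(\cdot)$ are continuous, the inertias of $J_{zz}f(\cdot)$, $J_{yy}f(\cdot)$, $J_{zz}f(\cdot)+E$ and $J_{yy}f(\cdot)-E_y$ are all constant on some neighborhood $N$ of $z^*$, so on $N$ the branch chosen by the memoryless form of the rule is constant and each ``Increase $\dots$ until'' step can be terminated at a single value valid throughout $N$; declaring this value to be $\epsilon_x(\cdot),\epsilon_y(\cdot)$ on $N$ yields functions that are finite, constant near $z^*$, satisfy the~\eqref{eq:conslqac} at $z^*$, and---by the case analysis above---give the stability (resp.\ instability) conclusions of Theorem~\ref{th:stab-ip-minmax}, and for $\delta_\epsilon$ small enough these agree with the values the running algorithm actually freezes near $z^*$, which completes the proof.
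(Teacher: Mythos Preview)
Your approach is correct and structurally the same as the paper's, which is a three-sentence sketch: the paper simply asserts (i) that the values produced depend only on $z$ and hence define functions $\epsilon_x(\cdot),\epsilon_y(\cdot)$, (ii) that these satisfy either the stability condition~\eqref{eq:minmax-eps-cond_cons} or the instability condition~\eqref{eq:gradyymubar} at each $z$, and (iii) that they are constant near each equilibrium because the update is frozen once $\norm{g(z,b)}_\infty\le\delta_\epsilon$. Your items (a), (c)/(d), and (b) are exactly these three points, expanded with the termination and case analyses that the paper leaves implicit by pointing back to the constructive parts of the proofs of Theorems~\ref{th:stab-newton-minmax} and~\ref{th:stab-ip-minmax}.

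The one place where you are genuinely more careful than the paper is (b). The paper's first sentence claims the values ``only depend on $z$'', glossing over the path-dependence you flag; its third sentence then invokes the $\delta_\epsilon$ freeze for constancy without checking that the frozen values are the ones the case analysis needs. Your resolution---using invertibility of $J_{zz}f(z^*),J_{yy}f(z^*)$ and continuity to make the relevant inertias constant on a neighborhood, and then taking $\delta_\epsilon$ small enough that the last unfrozen iterate already lies in that neighborhood---is a legitimate way to close this gap, and it is a refinement rather than a departure from the paper's argument.
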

\begin{proof}
For each $z$, Algorithm \ref{alg:ip-minmax} produces values of $\epsilon_x$ and $\epsilon_y$ that only depend on $z$, therefore it implicitly constructs functions $\epsilon_x(\cdot)$ and $\epsilon_y(\cdot)$. Moreover, $\epsilon_x(\cdot)$ and $\epsilon_y(\cdot)$ are such that either the stability condition \eqref{eq:minmax-eps-cond_cons} or the instability condition  \eqref{eq:gradyymubar} are satisfied for each $z$, therefore they are satisfied in the neighborhood of any equilibrium point $z^*$. Finally, $\epsilon_x(\cdot)$ and $\epsilon_y(\cdot)$ are constant in a neighborhood around each equilibrium point as the values of $\epsilon_x$ and $\epsilon_y$ are not adjusted when $\norm{g(z,b)}_{\infty}\le \delta_\epsilon$.
\end{proof}


In Algorithm \ref{alg:ip-minmax}, for each $z$, $(\epsilon_x,\epsilon_y)$ is chosen to satisfy the conditions of Theorem \ref{th:stab-ip-minmax}, and therefore generate the desired stability and instability. This means that the algorithm essentially guarantees that the modified primal-dual interior-point method can only converge to an equilibrium point if such point is a local minmax. A key point of the algorithm is that it only uses the inertia of matrices, which can be efficiently computed using either the LBLt or LDLt decomposition, as we further detail in the following remark.

\begin{remark}[Computing the inertia] It is not necessary to actually compute the eigenvalues of $J_{zz}f(z)$ in order to determine the inertia. A first option is to use the lower-triangular-block-lower-triangular-transpose (LBLt) decomposition \cite[Appendix A]{nocedal_numerical_2006}, which decomposes $J_{zz}f(z)$ into the product $LBL'$ where $L$ is a lower triangular matrix and $B$ a block diagonal one, the inertia of $B$ is the same as the inertia of $J_{zz}f(z)$.
	
Let $\Gamma=\diag(\gamma\oneb_{n_x+m_x},-\gamma\oneb_{n_y+m_y},\gamma\oneb_{l_y+m_y},-\gamma\oneb_{l_x+m_x})$, with $\gamma$ a small positive number. A second approach is to use the lower-triangular-diagonal-lower-triangular-transpose (LDLt) decomposition, to decompose $J_{zz}f(z)+\Gamma$ into the product $LDL'$ where $L$ is a lower triangular matrix and $D$ is a diagonal matrix; the inertia of $D$, which is given by the number of positive, negative and zero elements of the diagonal of $D$, gives the inertia of $J_{zz}f(z)+\Gamma$. The matrix $\Gamma$ introduces a distortion in the inertia but it helps to stabilize the computation of the LDLt decomposition, which tends to be faster than the LBLt decomposition. This is the approach we use in our implementation; it has been studied in primal-dual interior-point algorithms for minimization and the distortion introduced by $\Gamma$ tends to be compensated by a better numerical algorithm \cite{vanderbei_symmetric_1995,higham_modifying_1998}.\hfill $\square$
\end{remark}

\subsection{Benchmark example for unconstrained minmax} \label{sc:alg-development-numerical-examples-benchmark}
Consider the following functions
\begin{align*}
f_1(x,y)&=2x^2 -y^2 + 4xy + 4/3y^3 - 1/4y^4 \\
f_2(x,y)&=(4x^2-(y-3x+0.05x^3)^2-0.1y^4)\exp(-0.01(x^2+y^2)) \\
f_3(x,y)&=(x-0.5)(y-0.5)+\exp(-(x-0.25)^2-(y-0.75)^2) \\
f_4(x,y)&=xy.
\end{align*}
The first three have been used as examples in \cite{adolphs_local_2019,wang_solving_2019,mertikopoulos_optimistic_2019} respectively, whereas the fourth one is a well known case for a simple but challenging function to find the local minmax. These problems all satisfy the assumption of Theorem \ref{th:stab-newton-minmax} and have local minmax points. We have chosen these functions because, as we will show, they illustrate some interesting behaviors.

Our goal is to compare the performance of Algorithm \ref{alg:ip-minmax} to the performance of two well established algorithms. On the one hand, we look at the performance of a ``pure'' Newton algorithm, \ie using $\epsilon_x(\cdot)=\epsilon_y(\cdot)=0$. On the other hand, we look into the convergence of a Gradient Descent Ascent (GDA), \ie
\begin{equation*}
	\begin{split}
		x^+=x-\alpha_x \grad_{x}f(x,y) \\
		y^+=y+\alpha_y \grad_{y}f(x,y)
	\end{split}
\end{equation*}
where $\alpha_x$ and $\alpha_y$ are constant and different for each problem; we did our best to select the best values $\alpha_x$ and $\alpha_y$ for each problem.

Each algorithm is initialized 1000 times, using the same initialization for the three of them each time. We compare their convergence properties according to three criteria: the number of times the algorithm converged to an equilibrium point (eq.), the number of times it converged to a local minmax point (minmax) and the average number of iterations to converge to a local minmax point (iter). The algorithm is terminated when the infinity norm of the gradient is smaller than $\delta_s=10\inv[5]$ and we declare that they did not converge if it has not terminated in less than 500 iterations for the pure Newton and Algorithm \ref{alg:ip-minmax}, and 50 000 for GDA. The result of the comparison is displayed in Table \ref{tab:compa}. The key take away from these examples is that Algorithm \ref{alg:ip-minmax} never converges towards an equilibrium point that is not a local minmax, in contrast with the pure Newton method which is attracted to any equilibrium point. Here is a detailed observation from this comparison.

\begin{table}[t]
	\begin{tabular}{c|ccc|ccc|ccc}
		& \multicolumn{3}{c|}{Pure Newton} & \multicolumn{3}{c|}{GDA} & \multicolumn{3}{c}{Algorithm \ref{alg:ip-minmax}}  \\
		& eq. & minmax  & iter      &eq. & minmax  & iter & eq. & minmax  & iter     \\ \hline
		$f_1$	& 1000      & 1000   &   4.1    &  1000     &  1000     &  485    &  1000     &  1000     &  5.7    \\
		$f_2$	&  1000     &  665     &   7.3    & 976      &  976     &  18195     & 996     &  996     &  8.1    \\
		$f_3$	&  954     &  485     &  4.8     &  373    &  373     &  40936     &  709     &  709     &  7.1     \\
		$f_4$	&  1000     &  1000     &  1     &  0    &  0     &  --     &  1000     &  1000     &  1  
	\end{tabular}\
\caption{Comparing the performance of Pure Newton's method, Gradient Descent Ascent and Algorithm \ref{alg:ip-minmax}. We randombly generated 1000 initializations and used them to start the algorithms from different locations. ``eq'' refers to the number of times the algorithm converged to an equilibrium point, \ie a point that satisfy the first order condition (but not necessarily the second order ones), ``minmax`` refers to the number of times the algorithm converged to a local mimmax, ``iter'' refers to the number of iterations it took for the algorithm to converge to a minmax point.}
\label{tab:compa}
\end{table}
\begin{itemize}
	\item The pure Newton algorithm has good overall convergence for all the problems, but it also tends to often converge towards an equilibrium point that is not a local minmax problems. On the other hand, when the pure Newton converges to a local minmax, it does so in less iterations than the other two methods. This is expected when comparing to the GDA, as it is a first order method. Pure Newton algorithm converges in (slightly) less iterations than Algorithm \ref{alg:ip-minmax} because taking $\epsilon_x$ and $\epsilon_y$  different than $0$ hinders the superlinear convergence property of Newton's method.
	\item The GDA algorithm seems to enjoy the property of always converging towards a local minmax, and except for $f_3(\cdot)$ and $f_4(\cdot)$, it has good rate of convergence. However, GDA takes an exceptionally long number of iterations to converge. This is somehow expected from the fact that it is a first order method, and it is partially compensated by each iteration being more simple to compute. However, one must keep in mind that none of this takes into account the time that needs to be spent adjusting the step sizes until a good convergence rate can be obtained.
	\item At last, Algorithm \ref{alg:ip-minmax} is across the board the algorithm with better convergence towards local minmax, and it does so in the smallest number of iterations. As it was expected from the theory, Algorithm \ref{alg:ip-minmax} never converges towards an equilibrium point that is not a local minmax. From a numerical perspective, the biggest takeaway is that while our results are only about local convergence, the algorithm still enjoys good global convergence properties; only in $f_3(\cdot)$ it does not converge essentially $100\%$ of the time. 
	\item Function $f_4(\cdot)$ is  particularly interesting example. First, notice that the pure Newton converges in one iteration. This is expected as  the iterations are given by
	$$
	\begin{bmatrix}
		x^+\\y^+
	\end{bmatrix}=
		\begin{bmatrix}
		x\\y
	\end{bmatrix}-
	\begin{bmatrix}
	0 & 1 \\1 & 0
\end{bmatrix}\inv
	\begin{bmatrix}
	y\\x
\end{bmatrix}=
\begin{bmatrix}
	x\\y
\end{bmatrix}-
\begin{bmatrix}
	0 & 1 \\1 & 0
\end{bmatrix}\inv
\begin{bmatrix}
	0 & 1 \\1 & 0
\end{bmatrix}
\begin{bmatrix}
	x\\y
\end{bmatrix}=
\begin{bmatrix}
	0\\0
\end{bmatrix}.
	$$
This is in stark contrast with GDA which, as it is well known, diverges away from the local minmax. As for Algorithm \ref{alg:ip-minmax}, it converges even though it does not satisfy the assumptions of Theorem \ref{th:stab-newton-minmax}, further emphasizing that these are sufficient but not necessary conditions. Notice that Algorithm \ref{alg:ip-minmax} is not the same as the pure Newton as the Hessian will be modified with an $\epsilon_y(x,y)>0$ to guarantee that the portion of the Hessian associated to the maximization is negative definite.
\end{itemize}

\subsection{The homicidal chauffeur example for constrained minmax} \label{sc:alg-development-numerical-examples-MPC}

\begin{figure*}[t!]
	\centering
	\subfloat[While guaranteeing instability]{%
		\includegraphics[width=0.32\linewidth]{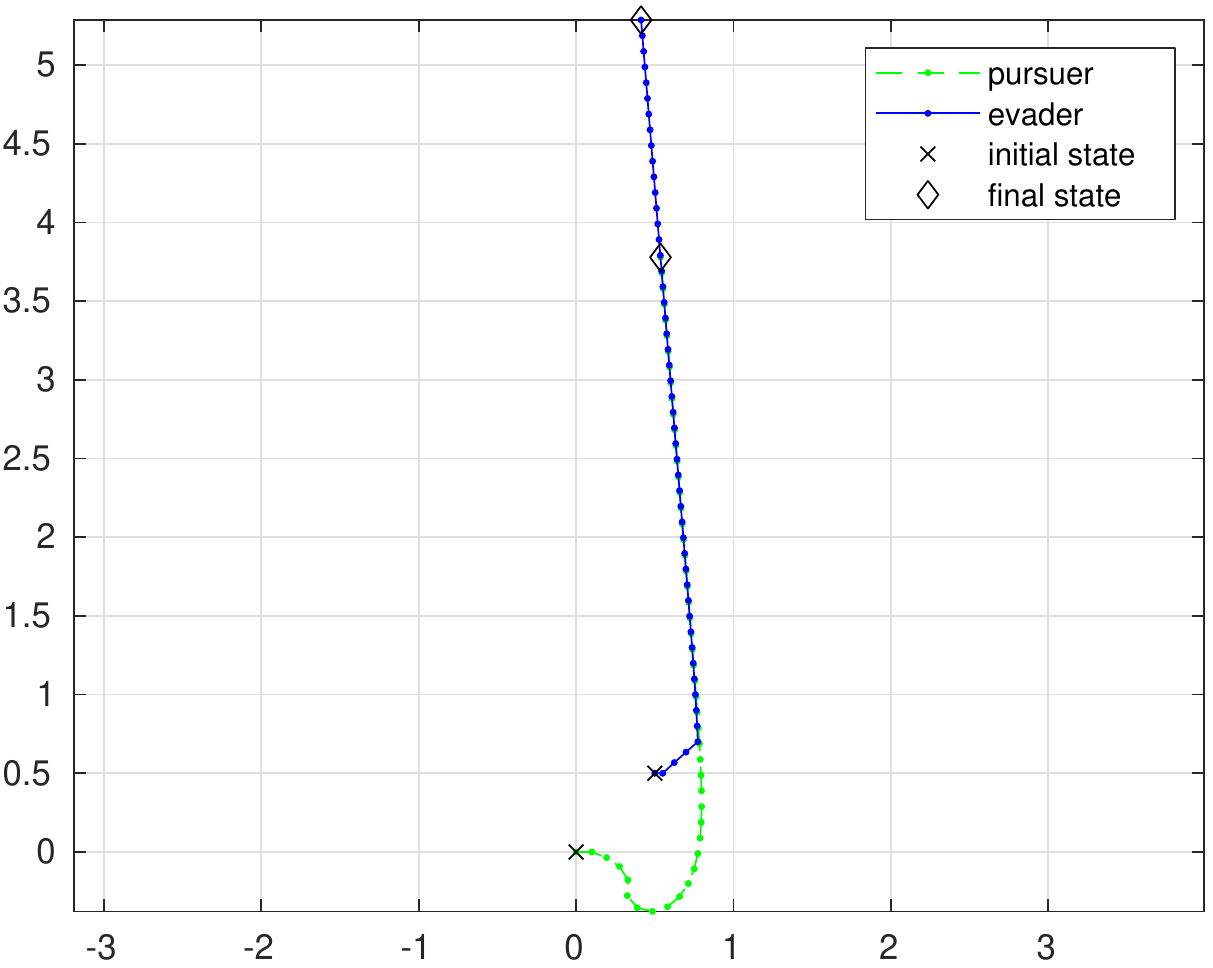}}
	\hfill  
	\subfloat[Without guaranteeing instability]{%
		\includegraphics[width=0.32\linewidth]{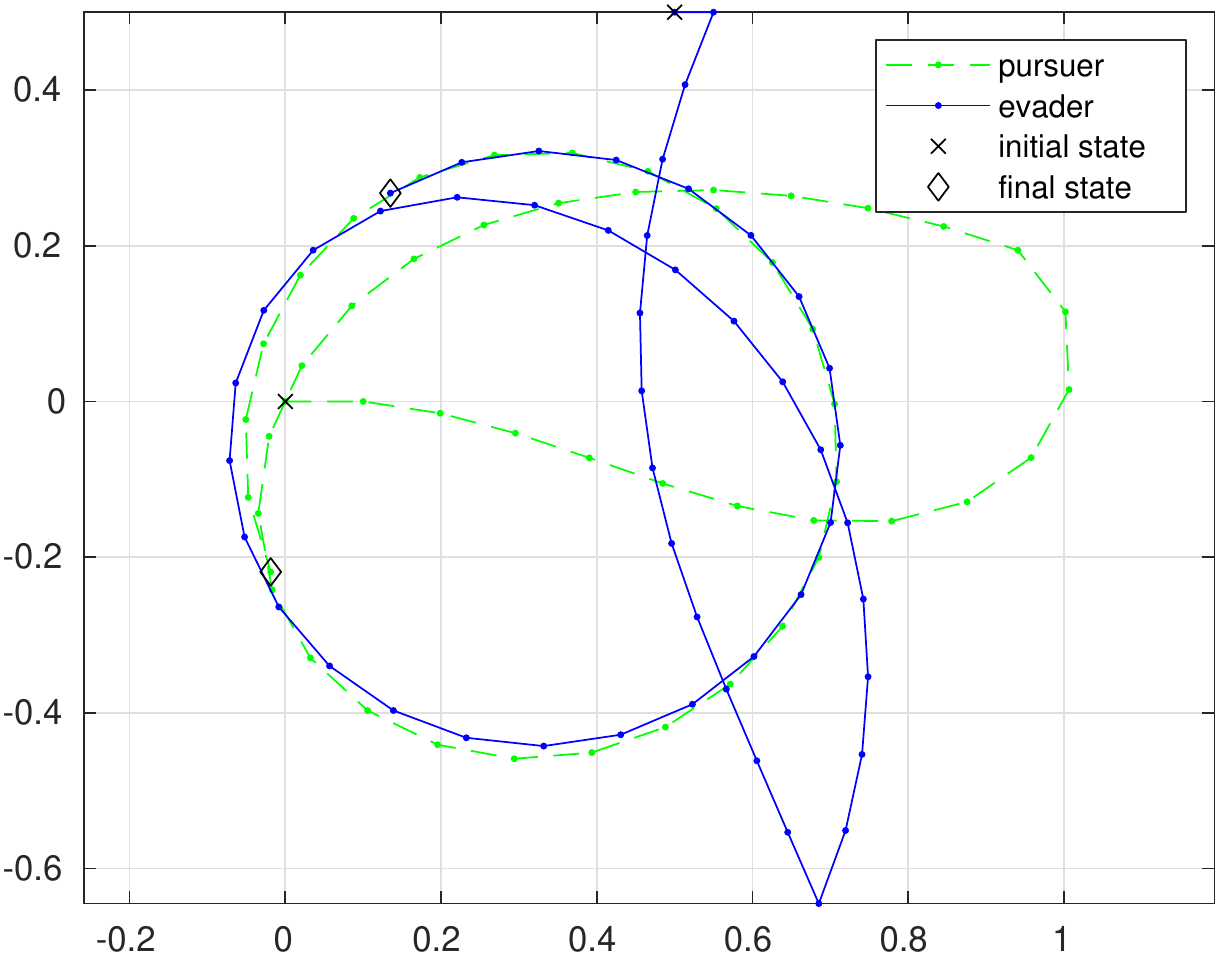}}
	\hfill  
	\subfloat[With guaranteeing instability only after $t=25$]{%
	\includegraphics[width=0.32\linewidth]{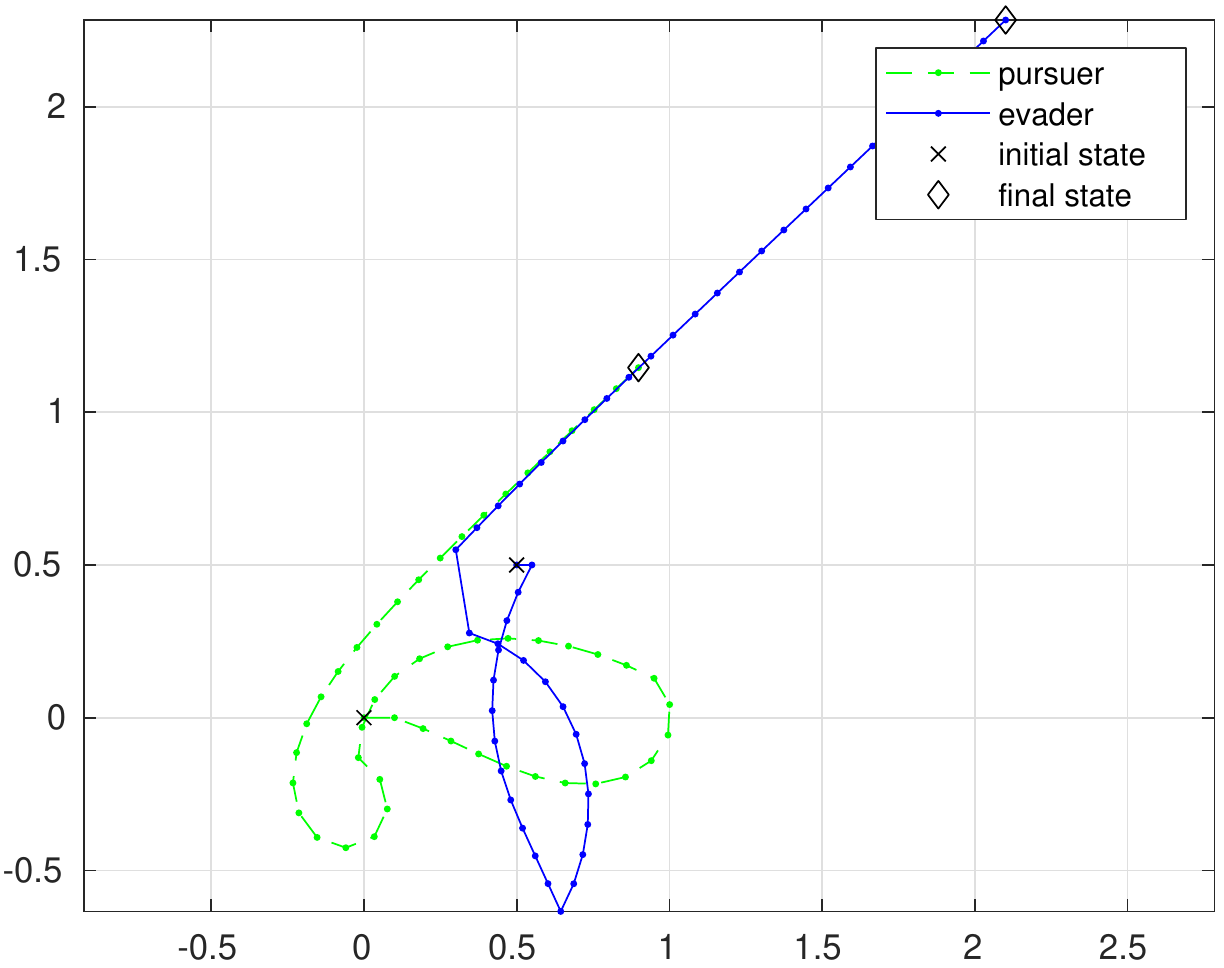}}	
	\caption{Trajectory for Homicidal Chauffeur problem with and without guaranteeing instability at equilibrium points that are not a local minmax.}
	\hfill 
	\label{fig:HC_stability} 
\end{figure*}

In the homicidal chauffeur problem, a pursuer driving a car is trying to hit a pedestrian, who (understandably) is trying to evade it. The pursuer is modeled as a discrete time Dubins's vehicle with equations
\begin{equation*}
x_p^+=\begin{bmatrix}
x_p^{(1)}+v \cos x_p^{(3)} \\
x_p^{(2)}+v \sin x_p^{(3)} \\
x_p^{(3)}+ u
\end{bmatrix}=:\phi_p(x_p,u)
\end{equation*}
where $x_p^{(i)}$ designates the i\textsuperscript{th} element of the vector $x_p$, $v$ is a constant forward speed and $u$ is the steering, over which the driver has control. The pedestrian is modeled by the accumulator
\begin{equation*}
x_e^+=x_e+d=:\phi_e(x_e,d)
\end{equation*}
where $d$ is the velocity vector. Given a time horizon $T$, and initial positions $x_e(t)$ and $x_p(t)$, we want to solve 
\begin{equation}\label{eq:mpc_pursuit_evader}
\min_{U \in \Ucal}
\quad\max_{D \in \Dcal}
\sum_{i=0}^{T-1} \norm{x_p^{(1,2)}(t+i+1)-x_e(t+i+1)}_2^2+\gamma_u u(t+i)^2-\gamma_d\norm{d(t+i)}_2^2
\end{equation}
where $x_p^{(1,2)}$ designates the first and second elements of the vector $x_p$; $\gamma_u$ and $\gamma_d$ are positive weights; and $U$, $\Ucal$, $D$ and $\Dcal$ are defined for $i=0,\dots,T-1$
\begin{align*}
&U:=u(t+i),x_p(t+i+1) 
\\&\Ucal:=\{u(t+i),x_p(t+i+1):\ u(t+i)^2\le u_{max}^2,  \\&\hspace{100pt} x_p(t+i+1)=\phi_p\big(x_p(t+i),u(t+i)\big)\}
\\&D:=d(t+i),x_e(t+i+1)
\\&\Dcal:=\{d(t+i),x_e(t+i+1):\ \norm{d(t+i)}_2^2\le d_{max}^2,
\\&\hspace{100pt} x_e(t+i+1)=\phi_e\big(x_e(t+i),d(t+i)\big)\}.
\end{align*}
Instead of explicitly computing the solution of the trajectory of the pursuer and evaders, we are implicitly computing them by setting the dynamics as equality constraints; we will show shortly that this has an important impact on the scalability of the algorithm.

Each player is controlled using Model Predictive Control (MPC), meaning that at each time step $t$ we solve \eqref{eq:mpc_pursuit_evader} obtaining controls $u(t)$ and $d(t)$, which are then used to control the system for the next time step. The problem satisfy the assumptions of Theorem \ref{th:stab-ip-minmax}, as it is differentiable and has local minmax points for which the LICQ and strict complementarity hold.

\paragraph{The importance of guaranteeing instability}

It is natural to ask whether it is important to enforce the instability guarantee, specially in the case where the \eqref{eq:conslqac} is not enough, meaning one needs to use line \ref{algline:instability} of Algorithm \ref{alg:ip-minmax}. 
In Figure \ref{fig:HC_stability} we show what can happen if they are not enforced. We take the homicidal chauffeur problem with a horizon of $T=20$ and we run the MPC control for $t=1,\dots,50$. In one case we enforce the instability guarantee, meaning that we use line \ref{algline:instability} of Algorithm \ref{alg:ip-minmax}, on the second case we only enforce the \eqref{eq:conslqac}, and on the third case we only enforce the instability guarantees after $t=25$. In all cases, we start the system with the exact same initial conditions.

In the first case, the evader (which is the maximizer), is able to find a control that allows it to get further from the pursuer. The average cost for all the time steps ($t=1,\dots,50$) ends up being around $0.2$. In the second case, the solver keeps being attracted towards a point that is not a local minmax (and more precisely, not a local maximum), which means that the evader is not capable of escaping the pursuer; as a consequence, the average cost for all the time steps ends up being around $0.05$, which is lower, as expected. Finally, in the third case, at $t=25$ the solver starts to be able to converge towards a local minmax, and the evader is able to escape from the pursuer.

This example illustrates how crucial it is to enforce instability. By doing it, we guarantee that the algorithm can only converge towards an equilibrium point that is a local minmax, and this can completely change the numerical solution. 

\paragraph{Exploiting sparsity}

\begin{figure*}[t!]
	\centering
	\subfloat[Computational scaling for solving homicidal chauffeur per horizon length]{%
		\includegraphics[width=0.46\linewidth]{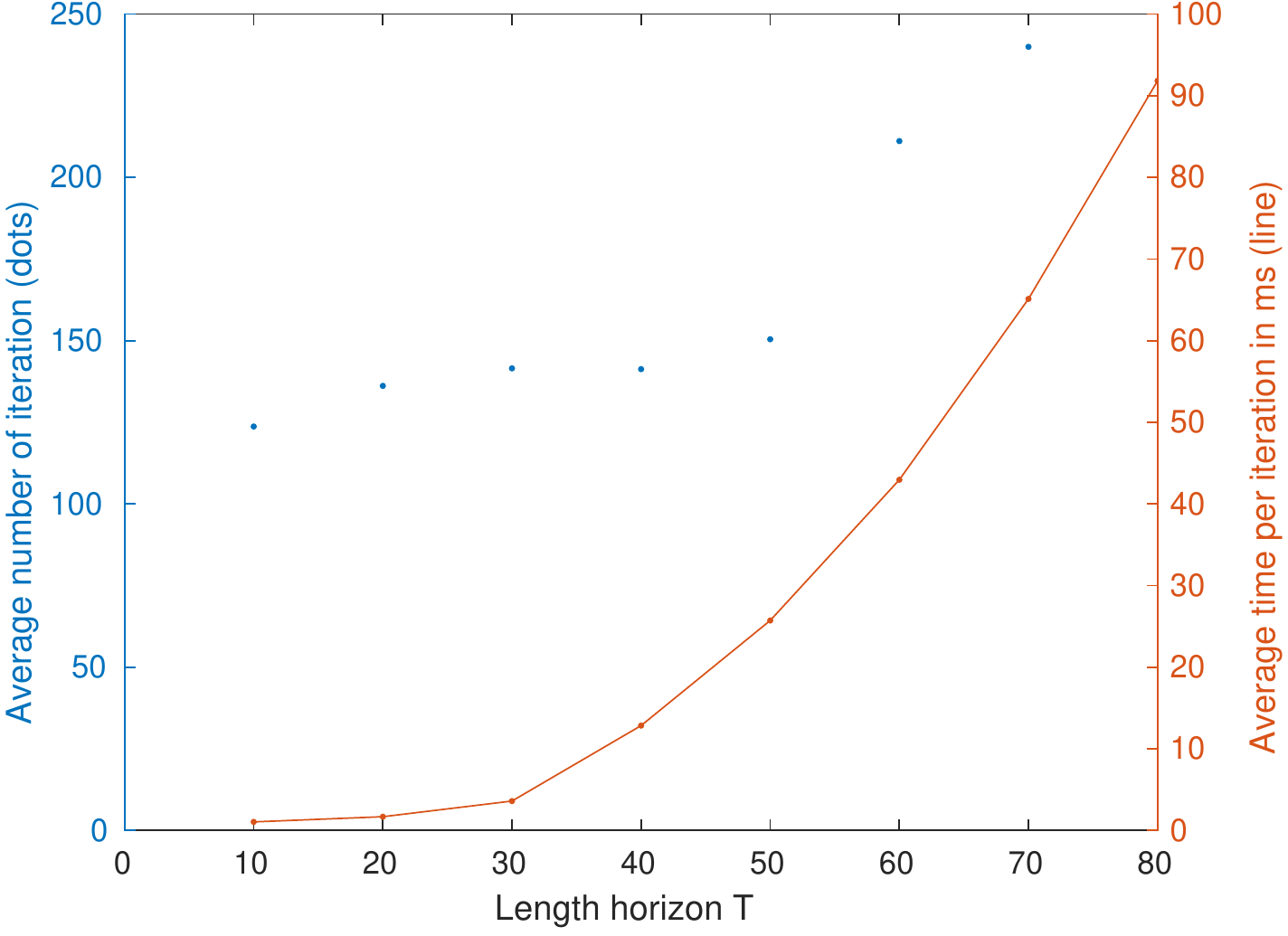}\label{subfig:scaling_noequal}}
	\hfill  
	\subfloat[Structural sparsity pattern of $J_{zz}f(z)$]{\raisebox{10pt}{
			\includegraphics[width=0.38\linewidth]{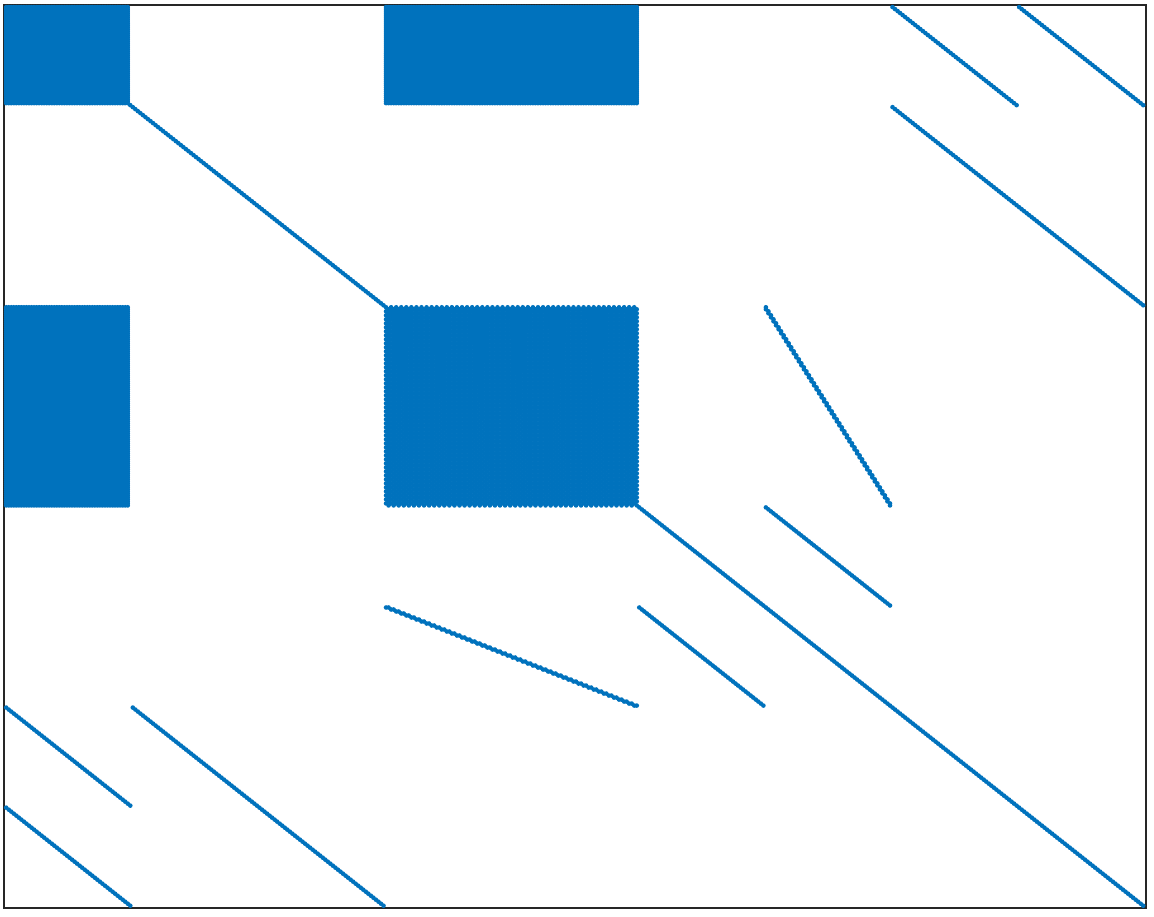}}\label{subfig:sparsity_noequal}}
	\caption{Scaling of homicidal chauffeur with horizon length and sparsity pattern of the Hessian when using the sequential approach}
	\label{fig:scale_sparsity_noequal}
\end{figure*}

Instead of setting the dynamics as equality constraints in \eqref{eq:mpc_pursuit_evader}, one could simply find the solution of the trajectory equation at each time step. This means to explicitly calculate $x_p(t+i+1)=\phi_p\Big(\phi_p\big(\dots,u(t+i-1)\big),u(t+i)\Big)$. In the MPC literature, this is known as the sequential approach, versus the simultaneous approach we used in \eqref{eq:mpc_pursuit_evader} \cite[Chapter 8.1.3]{rawlings_model_2017}. We want to study the scalability of the algorithm by enlarging the horizon $T$, both when using the sequential and the simultaneous approaches.

The sequential approach solves an optimization problem in a smaller state space, because it only needs to solve the optimization for $u(t),\dots,u(t+T)$ and $d(t),\dots,d(t+T)$ and it does not have to handle equality constraints. However, as we can see from the sparsity pattern in Figure \ref{subfig:sparsity_noequal}, the Hessian is rather dense, with large parts of it containing nonzero entries. As it can be seen in Figure \ref{subfig:scaling_noequal}, the algorithm scales rather poorly as the horizon length (and hence, the number of variables) increases; it no longer converges reliably after $T=80$. 

The simultaneous approach on the other hand solves the optimization problem in a much larger space state, because not only it needs to also solve for $u(t),\dots,u(t+T)$ and $d(t),\dots,d(t+T)$, but also for $x_p(t),\dots,x_p(t+T)$ and $x_e(t),\dots,x_e(t+T)$ and it also needs to handle equality constraints. Fortunately, as we can see from the sparsity pattern in Figure \ref{subfig:sparsity}, most of the entries in the Hessian are actually structurally zero (meaning they are always zero). \texttt{TensCalc}'s implementation of the LDLt factorization exploits sparsity patterns and scales roughly in $O(T)$, which makes it substantially more efficient than standard LDLt decomposition, which scales in $O(T^3)$ \cite[Appendix A]{nocedal_numerical_2006}. At each step of Algorithm \ref{alg:ip-minmax}, most of the time is spent computing the LDLt decomposition, either for adjusting $\epsilon_x$ and $\epsilon_y$ or to invert $H_{zz}f(z)$. As a consequence, we can see in Figure \ref{subfig:scaling} that both the number of iterations necessary to solve the optimization as well as the time per iteration scale roughly linear, the first being multiplied by about $1.7$ while the second by $3.5$ while the horizon length $T$ is multiplied by roughly $30$.

\begin{figure*}[t!]
	\centering
	\subfloat[Computational scaling for solving homicidal chauffeur per horizon length]{%
		\includegraphics[width=0.46\linewidth]{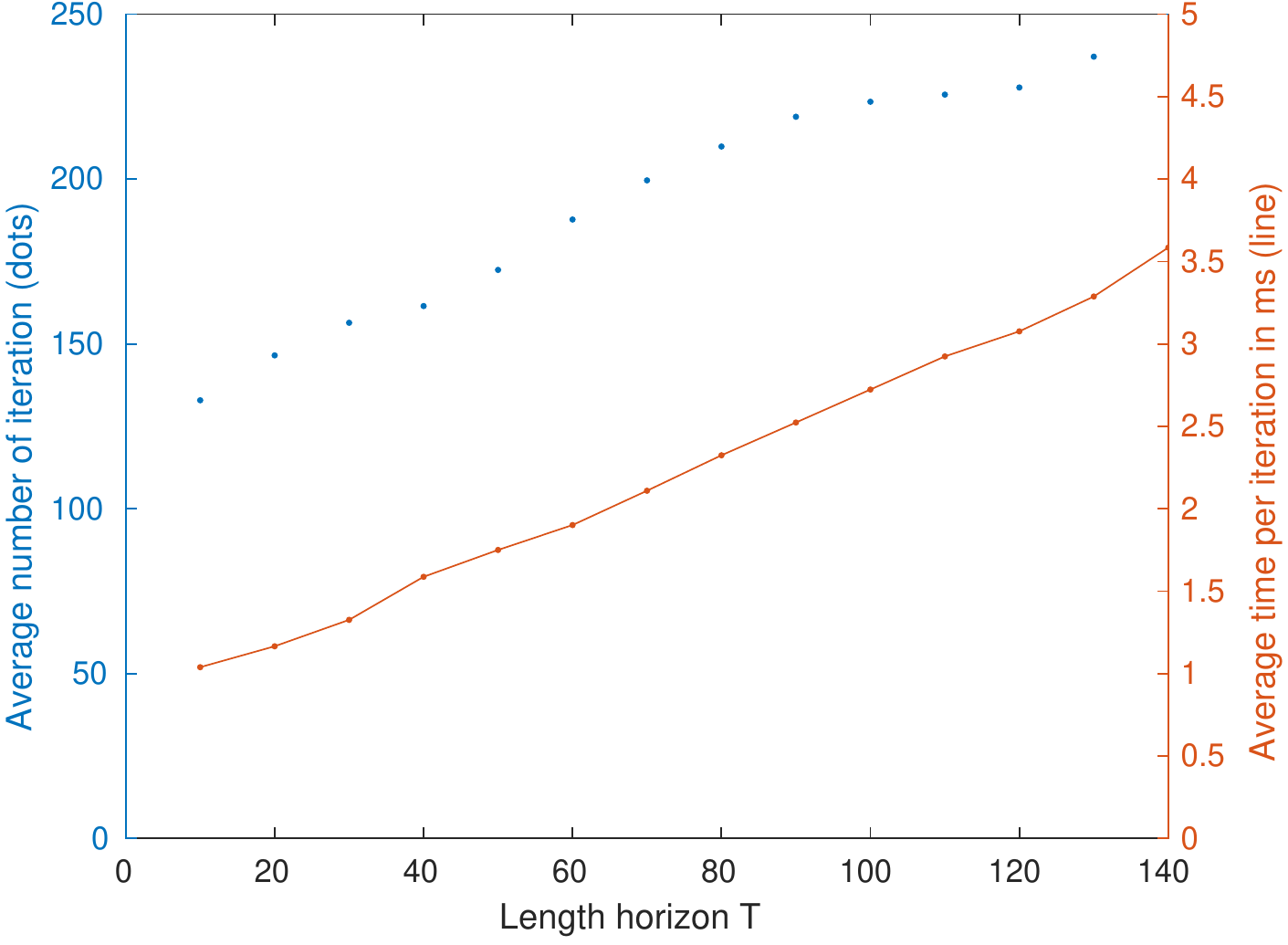}\label{subfig:scaling}}
	\hfill  
	\subfloat[Structural sparsity pattern of $J_{zz}f(z)$]{\raisebox{10pt}{
			\includegraphics[width=0.38\linewidth]{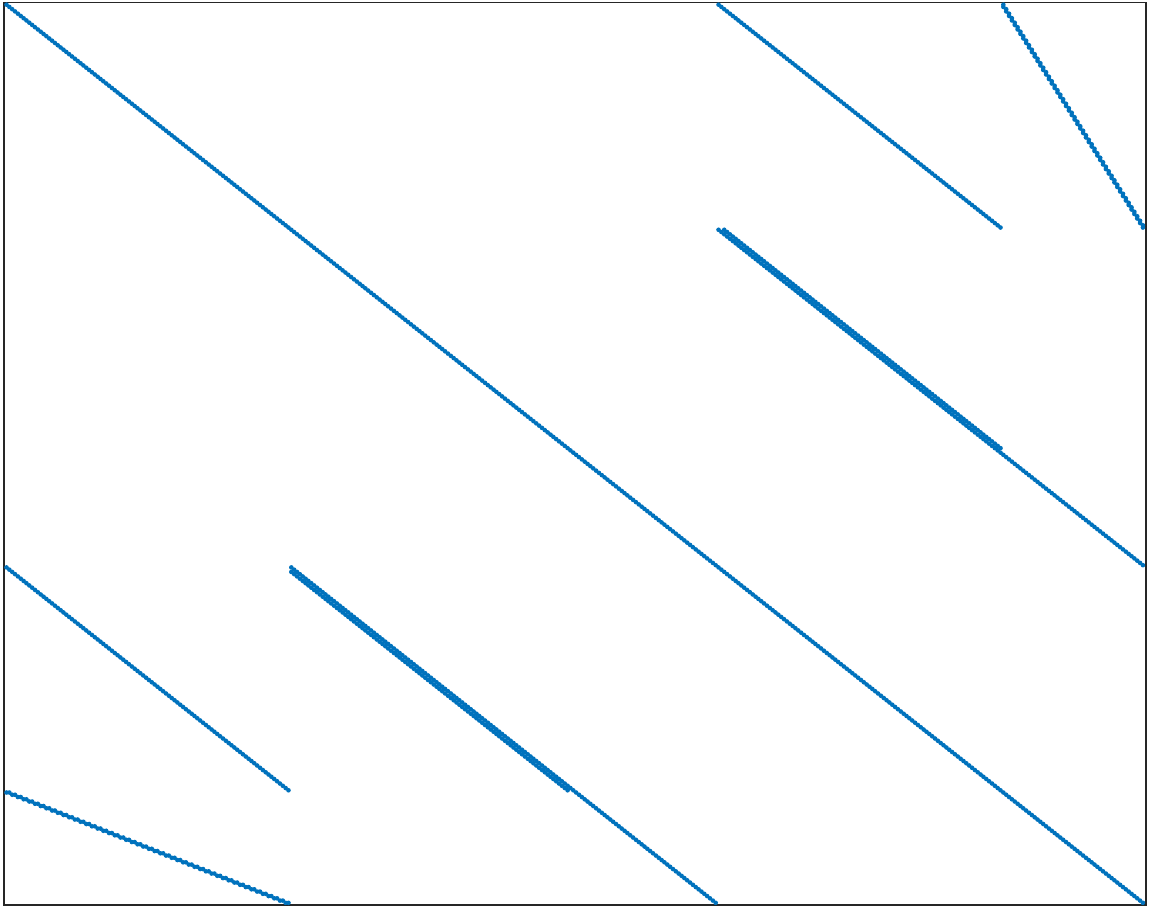}}\label{subfig:sparsity}}
	\caption{Scaling of homicidal chauffeur with horizon length and sparsity pattern of the Hessian}
	\label{fig:scale_sparsity}
\end{figure*}

\begin{remark}[Minmax problems with shared dynamics]
In the homicidal chauffeur, the control of the pursuer does not impact the \emph{dynamics} of the evader, and vice versa. This is why in \eqref{eq:mpc_pursuit_evader} the dynamics can be set as equality constraints independently for the min and for the max. 

Now consider the problem 
\begin{equation*}
x^+=f(x,u,d)
\end{equation*}
where $u$ is the control and $d$ is the disturbance and one wants to minimize a cost function $V(x(1),\dots,x(T),u(0),\dots,u(T-1))$ given the worst disturbance $d(1),\dots,d(T)$. Because both the control and the disturbances influence the dynamics, we need to include the dynamics as equality constraints for the \emph{maximization}, leading to the optimization problem
\begin{equation*}
\min_{u(i)\in \Ucal, i=0,\dots,T-1}\max_{\substack{d(i)\in\Dcal,x(i+1),i=0,\dots,T-1: \\ x(i+1)=f(x(i),u(i),d(i))}} V\Big(x(1),\dots,x(T),u(0),\dots,u(T-1)\Big)
\end{equation*}
where $\Ucal,\Dcal$ are the feasible sets for the control and disturbances. It is important to notice that $x$ just acts as a latent/dummy variable that allows us to avoid solving the trajectory equation. Setting it as a maximization variable does not changes the result as $x$ is always exactly determined by the value of $u$ and $d$. It does, however, improves the numerical efficiency of the algorithm as now the Hessian matrices are sparse and their LDL decomposition can be efficiently computed.\hfill $\square$%
\end{remark}

\section{Conclusion}

The main contribution of this article is the construction of Newton and primal-dual interior-point algorithm for nonconvex-nonconcave minmax optimization that can only converge towards an equilibrium point if such point is a local minmax. We established this results by modifying the Hessian matrices such that the update steps can be seen as the solution of quadratic programs that locally approximate the minmax problem. While our results are only local, using numerical simulations we see that the algorithm is able to make progress towards a solution even if it does not start close to it. We also illustrated using numerical examples how important it is to have a formulation of the minmax problem such that the Hessian matrix is sparse. 

The main future direction would be to develop non-local convergence results. We believe that the best approach to obtain such results would be to develop a type of Armijo rule which could be used to obtain similar results to those from minimization. Developing filters and merit function could also play an important role in coming up with ways to improve the algorithm's convergence. 

\section{Acknowledgment}
The authors would like to thank the reviewers and editors for their comments which greatly improved the quality of the article and helped to clarify the main challenges we were addressing. The first author, Raphael Chinchilla, would also like to acknowledge Prof. Stephen Boyd for his suggestions and interest in the paper.

\subsection{Funding}

This material is based upon work supported by the U.S. Office of Naval Research under the MURI grant No. N00014-16-1-2710.

\appendix
\begin{appendices}

\section{Second order sufficient conditions for constrained minimization and minmax optimization} \label{app:proof_propo_secon}

\subsection{Proof of Proposition \ref{prop:constrsecondorder_min} (constrained minimization)}

The first step is to show that $g(z,0)=\zerob$ is equivalent to the  Karush--Kuhn--Tucker (KKT) conditions \cite[Chapter 12]{nocedal_numerical_2006}. Consider the  ``full" Lagrangian $\tilde L(x,s_x,\nu_x,\lambda_x,\tau_x)=f(x)+\nu_x'G_x(x)+\lambda_x'(F_x(x)+s_x)-\tau_x's_x$ for the optimization \eqref{eq:minimization_slack}. The KKT condition would then be that 
\begin{equation}\label{eq:kkt_naive_slack}
\begin{bmatrix}
\grad_x \tilde L(x,s_x,\nu_x,\lambda_x,\tau_x) \\
\grad_{s_x} \tilde L(x,s_x,\nu_x,\lambda_x,\tau_x)= \lambda_x-\tau_x \\
G_x(x)\\
F_x(x)+s_x\\
\tau_x \odot s_x
\end{bmatrix}=\zerob 
\end{equation}
and $s_x,\tau_x \ge\zerob$. The second equation can be used to substitute $\tau_x$ by $\lambda_x$, which gives the equality $g(z,0)=\zerob$.

Now the second order sufficient conditions. Let us start by rewriting the minimization \eqref{eq:minimization} but instead of using as slack variables $s_x$ with the constraint $s_x\ge0$, using the slack variable $w_x\odot w_x$ (where $\odot$ is the element wise product):
\begin{equation}\label{eq:minimization_slack_w}
\min_{x,w_x:G_x(x)=\zerob,F_x(x)+w_x\odot w_x=\zerob} f(x).
\end{equation}
Consider now the solution cone
\begin{multline*}
\Ccal_x(z):=\{(d_x,d_w) \in \eR^{n_x+ m_x}\backslash\{\zerob \}: \grad_x G_x(x)'d_x=\zerob,\\ \grad_x F_x(x)d_x + 2\diag(w_x)d_w=\zerob\}
\end{multline*}
Let $(x,w_x,\nu_x,\lambda_x)$ be a point such that the KKT conditions for \eqref{eq:minimization_slack_w} hold. As, by assumption, the LICQ and strict complementarity conditions hold, if
\begin{equation}\label{eq:second_order_min_cone}
\begin{bmatrix}
d_x\\
d_w
\end{bmatrix}'
\begin{bmatrix}
\grad_{xx} L(z) & \zerob \\
\zerob & 2\diag(\lambda_x) \\
\end{bmatrix}
\begin{bmatrix}
d_x\\
d_w
\end{bmatrix}'>0 \ \forall (d_x,d_w)\in \Ccal_x(z)
\end{equation}
then $(x,w_x,\nu_x,\lambda_x)$ is a local minimum of \eqref{eq:minimization_slack_w}. The proof can be found in \cite[Theorem 12.5]{nocedal_numerical_2006}. 

We now need to prove that \eqref{eq:second_order_min_cone} is equivalent to the condition \eqref{eq:second_order_inertia_constrained_min} from the proposition. Because the LICQ and strict complementarity hold, the set $\Ccal_x(z)$ is given by the null space (a.k.a. the kernel) of the matrix
\begin{equation}
\tilde H_{x\lambda}f(z)=\begin{bmatrix}
\grad_{x} G_x(x) & \grad_{x}F_x(x) \\
0 & 2\diag(w_x).
\end{bmatrix}
\end{equation}
This result can be found in \cite[Chapter 12.5]{nocedal_numerical_2006}, in the subsection ``Second-order conditions and projected Hessian". Let $Z_x\in\eR^{n_x+m_x,n_x+m_x-m_x-l_x}$ be a matrix with full column rank such that $ \tilde  H_{x\lambda}f(z)'\,Z_x=\zerob$. Then, the condition \eqref{eq:second_order_min_cone} can be rewritten as
\begin{equation*}
Z_x'\begin{bmatrix}
\grad_{xx} L(z) & \zerob \\
\zerob & 2\diag(\lambda_x) \\
\end{bmatrix}Z_x \succ 0
\end{equation*}
which is equivalent to say that 
$$
\inertia\qty(Z_x'\begin{bmatrix}
\grad_{xx} L(z) & \zerob \\
\zerob & 2\diag(\lambda_x) \\
\end{bmatrix}Z_x)=(n_x-l_x,0,0)
$$

Now consider the matrix
\begin{equation}
\tilde H_{zz}f(z)=
\begin{bmatrix}
\grad_{xx} L(z) & \zerob & \grad_{x} G_x(x) & \grad_{x}F_x(x) \\
\zerob & 2\diag(\lambda_x) & \zerob & 2\diag(w) \\
\grad_{x} G_x(x)'& \zerob & \zerob & \zerob\\
\grad_{x}F_x(x)'& 2\diag(w) &\zerob & \zerob  \\
\end{bmatrix}.
\end{equation}
As the LICQ conditions hold, according to \cite[Theorem 16.3]{nocedal_numerical_2006}
$$
\inertia( \tilde H_{zz}f(z))=\inertia\qty(Z_x'\begin{bmatrix}
\grad_{xx} L(z) & \zerob \\
\zerob & 2\diag(\lambda_x) \\
\end{bmatrix}Z_x )+(l_x+m_x,l_x+m_x,0).
$$
Therefore \eqref{eq:second_order_min_cone} holds if and only if $\inertia(\tilde  H_{zz}f)=(n_x+m_x,l_x+m_x,0)$. 

We have almost finished the proof, we now just need to prove that $\inertia(\tilde H_{zz}f(z))=\inertia(H_{zz}f(z))$. Using the equality condition $F_x(x)+w_x\odot w_x=0$, we obtain the relation $w_x=(-F_x(x))^{1/2}=s_x^{1/2}$. If we substitute back this result in $\tilde H_{zz}f(z)$ we almost have that $\tilde H_{zz}f(z)$ is equal to $H_{zz}f(z)$ except for the $2$ in front of $\diag(\lambda_x)$ and $\diag(s^{1/2})$. Take the matrix $\Xi$ defined by
$$
\Xi=\diag([\oneb_{n_x},[a^{(1)},a^{(2)},\dots,a^{(m_x)}],\oneb_{l_x+m_x}])
$$
where
$$
a^{(i)}=
\begin{cases}
\frac{1}{2} & \tif \lambda_x^{(i)}=0 \tand s_x^{(i)}\ne0  \\
\frac{1}{\sqrt{2}} & \tif \lambda_x^{(i)}\ne0 \tand s_x^{(i)}=0
\end{cases}
$$
with $\lambda_x^{(i)} \tand s_x^{(i)}$ denoting the $i^\text{th}$ elements of $\lambda_x \tand s_x$. Then $\Xi \tilde  H_{zz}f(z)\Xi =  H_{zz}f(z)$ which, according to Sylvester's law of inertia \cite[Theorem 1.5]{zhang_schur_2005}, implies that $\text{inertia}(\tilde H_{zz}f(z))=\text{inertia}(H_{zz}f(z))$, which finishes the proof. \hfill$\square$ 

\subsection{Proof of Proposition \ref{prop:second-order-constrained-minmax} (constrained minmax optimization)}
First, using the exact same reasoning as in the proof of Proposition \ref{prop:constrsecondorder_min}, one can show that $g(z,0)=0$ is equivalent to the first order necessary condition in \cite{dai_optimality_2020}.

Similarly to what we did in the proof of Proposition \ref{prop:constrsecondorder_min}, let us start by rewriting the constrained minmax optimization \eqref{eq:minmaxconstrained_slack} using the slack variables $w\odot w$:
$$
\min_{x,w_x: G_x(x)=\zerob,F_x(x)+w_x\odot w_x=\zerob}\quad \max_{y,w_y: G_y(x,y)=\zerob,F_y(x,y)+w_y\odot w_y=\zerob} f(x,y).
$$
Consider the solution cones
\begin{multline*}
\Ccal_y(z):=\{(d_y,d_{w_y}) \in \eR^{n_y + m_y}\backslash\{\zerob \}: \grad_y G_y(x,y)d_y=\zerob,\\ \grad_y F(x,y)d_y +2\diag(w_y)d_{w_y} =\zerob \}
\end{multline*}
and 
\begin{multline*}
\Ccal_x(z):=\{(d_x,d_{w_x}) \in \eR^{n_x + m_x}\backslash\{\zerob\}: \grad_x G_x(x)'d_x=\zerob\\ \grad_x F_x(x)d_x + 2\diag(w_x)d_{w_x}=\zerob\}
\end{multline*}

Let $z$ be a point such that $g(z,0)=\zerob$. As, by assumption, the LICQ and strict complementarity hold, if
\begin{subequations}
	\begin{equation} \label{eq:suffconstrmax}
	\begin{bmatrix}
	d_y\\
	d_{w_y}
	\end{bmatrix}'
	\begin{bmatrix}
	\grad_{yy} L(z) & \zerob \\
	\zerob & -2\diag(\lambda_y)
	\end{bmatrix}
	\begin{bmatrix}
	d_y\\
	d_{w_y}	
	\end{bmatrix}	
	<0\ \forall\ (d_y,d_{w_y}) \in \Ccal_y(z)
	\end{equation}
	and 
	\begin{equation} \label{eq:suffconstrmin}
	\begin{bmatrix}
	d_x\\
	d_{w_x}\!
	\end{bmatrix}'	
	\Big(H_{xx}L(z) -  H_{xy}f(z)  H_{yy}f(z)\inv  H_{xy}f(z)' \Big) 	\begin{bmatrix}
	d_x\\
	d_{w_x}
	\end{bmatrix}\!>\!0\ \forall\ (d_x,d_{w_x}) \in \Ccal_x(z)
	\end{equation}
\end{subequations}
then $(x,w_x,\nu_x,\lambda_x)$ is a local minimum of \eqref{eq:minimization_slack_w}. The proof can be found in  \cite[Theorem 3.2]{dai_optimality_2020}.

The proof between the equivalence of the condition \eqref{eq:suffconstrmax} and  $\inertia( H_{yy}f(z))=(l_y+m_y,n_y+m_y,0)$ is almost identical to the proof of Proposition \eqref{prop:constrsecondorder_min}. 

The condition on the inertia of $\inertia( H_{zz}f(z))$ require some more development. In an analogous way to the proof of Proposition \eqref{prop:constrsecondorder_min}, let $Z_x$ be a matrix with full column rank such that $H_{x\lambda}f(z)'\,Z_x=\zerob$. Then the sufficient conditions \eqref{eq:suffconstrmin} for the reformulated outer minimization is
\begin{equation} \label{eq:constminnullspace}
Z_x'\Big( H_{xx}f(z) -  H_{yx}f(z)'  H_{yy}f(z)\inv  H_{yx}f(z) \Big)Z_x \succ 0.
\end{equation}
We want now to define a new partition of $H_{zz}f(z)$ which we will use to finish the proof. Consider the matrices
$$
\bar H_{zz}f(z)= \begin{bmatrix}
H_{xx}f(z) &  H_{xy}f(z) \\
H_{xy}f(z)' &  H_{yy}f(z)
\end{bmatrix}
\quad \tand \quad
\bar H_{x\lambda}f(z) =
\begin{bmatrix}
H_{x\lambda}f(z) \\
\zerob_{n_y+m_y+l_y+m_y,l_x+m_x}
\end{bmatrix}.
$$
such that
$$
H_{zz}f(z)=
\begin{bmatrix}
\bar H_{zz}f(z) & \bar H_{x\lambda}f(z)\\
\bar H_{x\lambda}f(z) ' & \zerob_{l_x+m_x}
\end{bmatrix}
$$
Let the matrix
$$
\bar Z_x:=
\begin{bmatrix}
Z_x & \zerob_{n_x+m_x,n_y+m_y+l_y+m_y}\\
\zerob_{n_y+m_y+l_y+m_y,n_x-l_x} & I_{n_y+m_y+l_y+m_y}.
\end{bmatrix}
$$
One can show that $\bar Z_x$ is full column rank and such that $\bar H_{x\lambda}f(z)'\,\bar Z_x=\zerob$. Therefore if we apply \cite[Theorem 16.3]{nocedal_numerical_2006} to $ H_{zz}f(z)$ (with the new partitioning) gives 
\begin{equation*}
\inertia( H_{zz}f(z))=\inertia\qty(\bar Z_x' \bar H_{zz} f(z)\bar Z_x)+(l_x+m_x,l_x+m_x,0)
\end{equation*}

In turn, $\inertia\qty(\bar Z_x'\bar H_{zz} f(z)\bar Z_x)$ can be simplified using Haynsworth inertia additivity formula \cite[Theorem 1.6]{zhang_schur_2005}:
\begin{align*}
&\inertia\qty(\bar Z_x'\bar H_{zz} f(z)\bar Z_x)
\\&=\inertia\qty(\begin{bmatrix}
  Z_x'  H_{xx}f(z)Z_x & Z_x' H_{xy}f(z) \\
   H_{xy}f(z)'Z_x &  H_{yy}f(z)
\end{bmatrix})
\\&=\inertia\qty( Z_x'\Big( H_{xx}f(z) -  H_{xy}f(z)  H_{yy}f(z)\inv  H_{xy}f(z)' \Big)Z_x)+\inertia( H_{yy}f(z)).
\end{align*}
Therefore, if \eqref{eq:suffconstrmax} holds,  \eqref{eq:suffconstrmin} is equivalent to
$$
\inertia( H_{zz}f(z))= (n_x-l_x,0,0)+(l_y+m_y,n_y+m_y,0)+(l_x+m_x,l_x+m_x,0)
$$
which finishes the proof. \hfill$\square$ 

\end{appendices}

\bibliography{biblio}

\end{document}